\numberwithin{equation}{section}
\theoremstyle{plain}
\newtheorem{theorem}{Theorem}[section]
\newtheorem{lemma}{Lemma}[section]
\theoremstyle{definition}
\theoremstyle{remark}
\title{Global $L_{p}$-$L_{q}$ estimates for solutions to the third initial-boundary value problem for the heat equation in a bounded domain}
\author{Ry\^{o}hei Kakizawa\thanks{Graduate School of Mathematical Sciences, The University of Tokyo, 3-8-1 Komaba Meguro-ku Tokyo 153-8914, Japan (\textit{E-mail address:} kakizawa@ms.u-tokyo.ac.jp)}}
\date{}
\begin{document}

\maketitle

\begin{abstract}
We discuss the unique solvability of the third initial-boundary value problem for the heat equation in a bounded domain.
This problem has uniquely a time-global solution in the anisotropic Sobolev space $W^{2,1}_{p,q}$ for any $1<p<\infty$, $1<q<\infty$.
Moreover, exponentially weighted $L_{p}$-$L_{q}$ estimates for time-global solutions can be established.
We prove the above properties by $L_{p}$ estimates for steady solutions to the heat equation, the theory of analytic semigroups on Banach spaces and the operator-valued Fourier multiplier theorem on UMD spaces.
\end{abstract}


\section{Introduction}
Let $\Omega$ be a bounded domain in $\mathbb{R}^{n}$ $(n \in \mathbb{Z}, \ n\geq 2)$ with its $C^{2}$-boundary $\partial\Omega$.
We consider the following initial-boundary value problem for the heat equation:
\begin{equation}
\begin{split}
&\partial_{t}u-\mathrm{div}(\kappa\nabla u)=f & \mathrm{in} \ \Omega\times(0,T), \\
&u|_{t=0}=u_{0} & \mathrm{in} \ \Omega, \\
&\kappa\partial_{\nu}u+\kappa_{s}u|_{\partial\Omega}=g & \mathrm{on} \ \partial\Omega\times(0,T),
\end{split}
\end{equation}
where $0<T\leq\infty$, $\kappa \in C^{1}(\overline{\Omega})$ satisfies $\kappa>0$ on $\overline{\Omega}$, $\kappa_{s} \in C^{1}(\partial\Omega)$ satisfies $\kappa_{s}>0$ on $\partial\Omega$, $u_{0}$, $f$ and $g$ are arbitrarily given functions in $\Omega$, in $\Omega\times(0,T)$ and on $\partial\Omega\times(0,T)$ respectively, $\nu \in C^{2}(\partial\Omega)$ is the outward unit normal vector on $\partial\Omega$.
The unique solvability of (1.1) in the anisotropic Sobolev space $W^{2,1}_{p,q}(\Omega\times(0,T))$ $(1<p<\infty, \ 1<q<\infty)$ has been studied in recent years.
Weidemaier \cite{Weidemaier} discussed the unique solvability of (1.1) in $W^{2,1}_{p,q}(\Omega\times(0,T))$ for any $0<T<\infty$, $3/2<p\leq q<\infty$, which remains to consider the case where $T=\infty$, $1<p<\infty$, $1<q<\infty$.
The main purpose of this paper is to obtain the unique solvability of (1.1) in $W^{2,1}_{p,q}(\Omega\times(0,T))$ by the argument based on \cite{Shibata 2}.
First, (1.1) has uniquely a solution in $W^{2,1}_{p,q}(\Omega\times(0,T))$ for any $0<T\leq\infty$, $1<p<\infty$, $1<q<\infty$.
Second, exponentially weighted $L_{p}$-$L_{q}$ estimates for time-global solutions to (1.1) are established.

This paper is organized as follows: In section 2, we define basic notation used in this paper, and state our main results and some lemmas for them.
$L_{p}$-$L_{q}$ estimates for time-global solutions to (1.1) are established in section 3.
In section 4, we prove not only the unique solvability of (1.1) in $W^{2,1}_{p,q}(\Omega\times(0,T))$ for any $0<T\leq\infty$, $1<p<\infty$, $1<q<\infty$ but also exponentially weighted $L_{p}$-$L_{q}$ estimates for time-global solutions to (1.1).
Finally, as a simple application of $L_{p}$-$L_{q}$ estimates for time-global solutions to (1.1), we consider the following initial-boundary value problem for the semilinear heat equation:
\begin{equation}
\begin{split}
&\partial_{t}u-\mathrm{div}(\kappa\nabla u)-|u|^{r-1}u=f & \mathrm{in} \ \Omega\times(0,T), \\
&u|_{t=0}=u_{0} & \mathrm{in} \ \Omega, \\
&\kappa\partial_{\nu}u+\kappa_{s}u|_{\partial\Omega}=g & \mathrm{on} \ \partial\Omega\times(0,T),
\end{split}
\end{equation}
where $r>1$.
It is proved in section 5 that (1.2) has uniquely a solution in $W^{2,1}_{p,q}(\Omega\times(0,T))$ for any $0<T\leq\infty$, $1<p<\infty$, $2<q<\infty$ and that $L_{p}$-$L_{q}$ estimates for time-local and time-global solutions to (1.2) are established.

\section{Preliminaries and main results}
\subsection{Function spaces}
Function spaces and basic notation which we use throughout this paper are introduced as follows: Let $G$ be an open set in $\mathbb{R}^{n}$.
$(L_{p}(G),\|\cdot\|_{L_{p}(G)})$ and $(W^{k}_{p}(G),\|\cdot\|_{W^{k}_{p}(G)})$ $(1\leq p\leq\infty, \ k \in \mathbb{Z}, \ k\geq 0)$ are the Lebesgue space and the Sobolev space respectively, $W^{0}_{p}(G)=L_{p}(G)$.
$B^{r}_{p,q}(G)$ $(1<p<\infty, \ 1\leq q<\infty, \ 0<r<\infty)$ is the Besov space \cite[7.30--7.49]{Adams} defined as $B^{r}_{p,q}(G)=(L_{p}(G),W^{\langle r \rangle}_{p}(G))_{r/\langle r \rangle,q}$, where $(X_{0},X_{1})_{\theta,q}$ $(0<\theta<1)$ is an interpolation space between two Banach spaces $X_{0}$ and $X_{1}$ by the K-method or the J-method, $\langle r \rangle=\min\{k \in \mathbb{Z} \ ; \ k>r\}$.
$(B^{r}_{p,q,0}(G),\|\cdot\|_{B^{r}_{p,q,0}(G)})$ is the Banach space of all functions which are in $B^{r}_{p,q}(\mathbb{R}^{n})$ and have support on $\overline{G}$, $\|u\|_{B^{r}_{p,q,0}(G)}=\|u\|_{B^{r}_{p,q}(G)}+\|t^{-(r+1/q)}\|u\|_{L_{p}(G^{t})}\|_{L_{q}(\mathbb{R}_{+})}$, where $G^{t}:=\{x \in G \ ; \ \mathrm{dist}(x,\partial G)<t \}$ for any $t>0$.

Let $I$ be an open interval in $\mathbb{R}$, $(X,\|\cdot\|_{X})$ be a Banach space.
$(L_{q}(I;X),\|\cdot\|_{L_{q}(I;X)})$ and $(W^{l}_{q}(I;X),\|\cdot\|_{W^{l}_{q}(I;X)})$ $(1\leq q\leq\infty, \ l \in \mathbb{Z}, \ l\geq 0)$ are the Lebesgue space and the Sobolev space of $X$-valued functions respectively, $W^{0}_{q}(I;X)=L_{q}(I;X)$.
$(W^{k,l}_{p,q}(G\times I),\|\cdot\|_{W^{k,l}_{p,q}(G\times I)})$ $(1\leq p\leq\infty, \ k \in \mathbb{Z}, \ k\geq 0)$ is the anisotropic Sobolev space defined as
\begin{equation*}
W^{k,l}_{p,q}(G\times I)=L_{q}(I;W^{k}_{p}(G))\cap W^{l}_{q}(I;L_{p}(G)),
\end{equation*}
\begin{equation*}
\|u\|_{W^{k,l}_{p,q}(G\times I)}=\|u\|_{L_{q}(I;W^{k}_{p}(G))}+\|u\|_{W^{l}_{q}(I;L_{p}(G))}.
\end{equation*}
In the case where $I=(0,T)$, we introduce the following function spaces:
\begin{equation*}
W^{l}_{q,0}((0,T);X)=\{u \in W^{l}_{q}((-\infty,T);X) \ ; \ u(t)=0 \ \mathrm{for \ any} \ t<0\},
\end{equation*}
\begin{equation*}
W^{0}_{q,0}((0,T);X)=L_{q,0}((0,T);X),
\end{equation*}
\begin{equation*}
W^{k,l}_{p,q,0}(G\times(0,T))=\{u \in W^{k,l}_{p,q}(G\times(-\infty,T)) \ ; \ u(t)=0 \ \mathrm{for \ any} \ t<0\}.
\end{equation*}

The Fourier transform $\mathcal{F}_{x}[f(x)](\xi)$, the inverse Fourier transform $\mathcal{F}^{-1}_{\xi}[g(\xi)](x)$, the Fourier-Laplace transform $\mathcal{L}_{(x,t)}[f(x,t)](\xi,\lambda)$ and the inverse Fourier-Laplace transform $\mathcal{L}^{-1}_{(\xi,\lambda_{2})}[g(\xi,\lambda)](x,t)$ are defined as follows:
\begin{equation*}
\mathcal{F}_{x}[f(x)](\xi)=\frac{1}{(2\pi)^{n/2}}\int_{\mathbb{R}^{n}}e^{-\sqrt{-1}x\cdot\xi}f(x)dx
\end{equation*}
for any $\xi \in \mathbb{R}^{n}$,
\begin{equation*}
\mathcal{F}^{-1}_{\xi}[g(\xi)](x)=\frac{1}{(2\pi)^{n/2}}\int_{\mathbb{R}^{n}}e^{\sqrt{-1}x\cdot\xi}g(\xi)d\xi
\end{equation*}
for any $x \in \mathbb{R}^{n}$,
\begin{equation*}
\mathcal{L}_{(x,t)}[f(x,t)](\xi,\lambda)=\frac{1}{(2\pi)^{(n+1)/2}}\int_{\mathbb{R}}e^{-\lambda t}dt\int_{\mathbb{R}^{n}}e^{-\sqrt{-1}x\cdot\xi}f(x,t)dx
\end{equation*}
for any $\xi \in \mathbb{R}^{n}$, $\lambda \in \mathbb{C}$, $\lambda=\lambda_{1}+\sqrt{-1}\lambda_{2}$, $\lambda_{1}, \lambda_{2} \in \mathbb{R}$,
\begin{equation*}
\mathcal{L}^{-1}_{(\xi,\lambda_{2})}[g(\xi,\lambda)](x,t)=\frac{1}{(2\pi)^{(n+1)/2}}\int_{\mathbb{R}}e^{\lambda t}d\lambda_{2}\int_{\mathbb{R}^{n}}e^{\sqrt{-1}x\cdot\xi}g(\xi,\lambda)d\xi
\end{equation*}
for any $x \in \mathbb{R}^{n}$, $t \in \mathbb{R}$.
The Fourier transform $\mathcal{F}_{t}[f(t)](\lambda_{2})$ with respect to $t \in \mathbb{R}$ and the inverse Fourier transform $\mathcal{F}^{-1}_{\lambda_{2}}[g(\lambda_{2})](t)$ with respect to $\lambda_{2} \in \mathbb{R}$ are similarly defined.
It can be easily seen that
\begin{equation}
\mathcal{L}_{(x,t)}[f(x,t)](\xi,\lambda)=\mathcal{F}_{(x,t)}[e^{-\lambda_{1}t}f(x,t)](\xi,\lambda_{2})
\end{equation}
for any $\xi \in \mathbb{R}^{n}$, $\lambda \in \mathbb{C}$, $\lambda=\lambda_{1}+\sqrt{-1}\lambda_{2}$, $\lambda_{1}, \lambda_{2} \in \mathbb{R}$,
\begin{equation}
\mathcal{L}^{-1}_{(\xi,\lambda_{2})}[g(\xi,\lambda)](x,t)=e^{\lambda_{1}t}\mathcal{F}^{-1}_{(\xi,\lambda_{2})}[g(\xi,\lambda)](x,t)
\end{equation}
for any $x \in \mathbb{R}^{n}$, $t \in \mathbb{R}$, $\lambda_{1} \in \mathbb{R}$.
$(H^{s}_{q}(\mathbb{R};X),\|\cdot\|_{H^{s}_{q}(\mathbb{R};X)})$ $(1\leq q\leq \infty, \ s\geq 0)$ is the Bessel-potential space defined as
\begin{equation*}
H^{s}_{q}(\mathbb{R};X)=\{u \in L_{q}(\mathbb{R};X) \ ; \ \langle d_{t} \rangle^{s}u \in L_{q}(\mathbb{R};X)\},
\end{equation*}
\begin{equation*}
\langle d_{t} \rangle^{s}u(t)=\mathcal{F}^{-1}_{\lambda_{2}}[(1+\lambda^{2}_{2})^{s/2}\mathcal{F}_{t}[u(t)](\lambda_{2})](t),
\end{equation*}
\begin{equation*}
\|u\|_{H^{s}_{q}(\mathbb{R};X)}=\|u\|_{L_{q}(\mathbb{R};X)}+\|\langle d_{t} \rangle^{s}u\|_{L_{q}(\mathbb{R};X)},
\end{equation*}
\begin{equation*}
H^{s}_{q}((0,T);X)=\{u \in L_{q}((0,T);X) \ ; \ \exists v \in H^{s}_{q}(\mathbb{R};X), \ v(t)=u(t) \ \mathrm{for \ any} \ 0<t<T\},
\end{equation*}
\begin{equation*}
\|u\|_{H^{s}_{q}((0,T);X)}=\inf\{\|v\|_{H^{s}_{q}(\mathbb{R};X)} \ ; \ \forall v \in H^{s}_{q}(\mathbb{R};X), \ v(t)=u(t) \ \mathrm{for \ any} \ 0<t<T\},
\end{equation*}
\begin{equation*}
H^{k,s}_{p,q}(G\times\mathbb{R})=L_{q}(\mathbb{R};W^{k}_{p}(G))\cap H^{s}_{q}(\mathbb{R};L_{p}(G)),
\end{equation*}
\begin{equation*}
\|u\|_{H^{k,s}_{p,q}(G\times\mathbb{R})}=\|u\|_{L_{q}(\mathbb{R};W^{k}_{p}(G))}+\|u\|_{H^{s}_{q}(\mathbb{R};L_{p}(G))},
\end{equation*}
\begin{equation*}
H^{k,s}_{p,q,0}(G\times\mathbb{R}_{+})=\{u \in H^{k,s}_{p,q}(G\times\mathbb{R}) \ ; \ u(t)=0 \ \mathrm{for \ any} \ t<0\},
\end{equation*}
\begin{equation*}
H^{k,s}_{p,q,0}(G\times(0,T))=\{u \in L_{q}((0,T);L_{p}(G)) \ ; \ \exists v \in H^{k,s}_{p,q,0}(G\times\mathbb{R}_{+}), \ v(t)=u(t) \ \mathrm{for \ any} \ 0<t<T\},
\end{equation*}
\begin{equation*}
\|u\|_{H^{k,s}_{p,q,0}(G\times(0,T))}=\inf\{\|v\|_{H^{k,s}_{p,q}(G\times\mathbb{R})} \ ; \ \forall v \in H^{k,s}_{p,q,0}(G\times\mathbb{R}_{+}), \ v(t)=u(t) \ \mathrm{for \ any} \ 0<t<T\}.
\end{equation*}
$\mathcal{S}(\mathbb{R};X)$ is the set of all $X$-valued functions which are rapidly decreasing in $\mathbb{R}$.

Let $(X,\|\cdot\|_{X})$ and $(Y,\|\cdot\|_{Y})$ be Banach spaces.
$(\mathcal{B}(X;Y),\|\cdot\|_{\mathcal{B}(X;Y)})$ is the Banach space of all bounded linear operators from $X$ to $Y$, $\mathcal{B}(X)=\mathcal{B}(X;X)$,
\begin{equation*}
\|A\|_{\mathcal{B}(X;Y)}=\sup_{x \in X\setminus\{0\}}\frac{\|Ax\|_{Y}}{\|x\|_{X}}.
\end{equation*}

$(l^{s}_{q},\|\cdot\|_{l^{s}_{q}})$ $(1\leq q\leq \infty, \ s\geq 0)$ is the Banach space of all $\mathbb{C}$-valued sequences defined as
\begin{equation*}
l^{s}_{q}=\{(a_{j})_{j \in \mathbb{Z}} \ ; \ a_{j} \in \mathbb{C} \ (j \in \mathbb{Z}), \ \|(a_{j})_{j \in \mathbb{Z}}\|_{l^{s}_{q}}<\infty\},
\end{equation*}
\begin{equation*}
\|(a_{j})_{j \in \mathbb{Z}}\|_{l^{s}_{q}}=
\begin{cases}
\displaystyle \left(\sum_{j \in \mathbb{Z}}|2^{js}a_{j}|^{q}\right)^{1/q} & \mathrm{if} \ 1\leq q<\infty, \\
\displaystyle \sup_{j \in \mathbb{Z}}2^{js}|a_{j}| & \mathrm{if} \ q=\infty.
\end{cases}
\end{equation*}
It follows from \cite[Theorem 5.6.1]{Bergh} that $l^{s}_{q}=(l^{1}_{\infty},l^{0}_{\infty})_{s,q}$ for any $1<q<\infty$, $0<s<1$.

\subsection{Strongly elliptic operator in $L_{p}$}
The strongly elliptic operator $A_{p}$ $(1<p<\infty)$ in $L_{p}(\Omega)$ with the zero Robin boundary condition is defined as $A_{p}=-\mathrm{div}(\kappa\nabla\cdot)$, $\mathcal{D}(A_{p})=\{u \in W^{2}_{p}(\Omega) \ ; \ \kappa\partial_{\nu}u+\kappa_{s}u|_{\partial\Omega}=0\}$, where $\mathcal{D}(A_{p})$ is the domain of $A_{p}$.
It is the same as in \cite[Theorems 2.5.2 and 7.3.6]{Pazy} that $A_{p}$ is a sectorial operator in $L_{p}(\Omega)$.
Therefore, $-A_{p}$ generates an uniformly bounded analytic semigroup $\{e^{-tA_{p}}\}_{t\geq 0}$ on $L_{p}(\Omega)$, fractional powers $A^{\alpha}_{p}$ of $A_{p}$ can be defined for any $\alpha\geq 0$, $A^{0}_{p}=I_{p}$, where $I_{p}$ is the identity operator in $L_{p}(\Omega)$.
Let us introduce Banach spaces derived from $A^{\alpha}_{p}$.
$(X^{\alpha}_{p}(\Omega),\|\cdot\|_{X^{\alpha}_{p}(\Omega)})$ and $X_{p,q}(\Omega)$ $(1<q<\infty)$ are defined as $X^{\alpha}_{p}(\Omega)=\mathcal{D}(A^{\alpha}_{p})$ with the norm $\|u\|_{X^{\alpha}_{p}(\Omega)}=\|A^{\alpha}_{p}u\|_{L_{p}(\Omega)}$ and $X_{p,q}(\Omega)=(L_{p}(\Omega),X^{1}_{p}(\Omega))_{1-1/q,q}$ respectively.
$\Lambda_{1}$ is the first eigenvalue of $-\mathrm{div}(\kappa\nabla\cdot)$ with the zero Robin boundary condition.

We state some lemmas concerning sectorial operators in Banach spaces and the characterization of $X_{p,q}(\Omega)$.
See, for example, \cite[Chapter 1]{Henry}, \cite[Chapter 2]{Pazy} about the theory of analytic semigroups on Banach spaces and fractional powers of sectorial operators.
\begin{lemma}
Let $1<p<\infty$, $\alpha\geq 0$, $0<\lambda_{1}<\Lambda_{1}$.
Then
\begin{equation}
\|A^{\alpha}_{p}e^{-tA_{p}}\|_{\mathcal{B}(L_{p}(\Omega))}\leq Ct^{-\alpha}e^{-\lambda_{1}t},
\end{equation}
where $C$ is a positive constant depending only on $n$, $\Omega$, $p$, $\kappa$, $\kappa_{s}$, $\alpha$ and $\lambda_{1}$.
\end{lemma}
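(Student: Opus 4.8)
The plan is to deduce the estimate from the classical smoothing bounds for the analytic semigroup $\{e^{-tA_{p}}\}_{t\geq 0}$ combined with the strict positivity of the first eigenvalue $\Lambda_{1}$, treating $0<t\leq 1$ and $t\geq 1$ separately. First, since $A_{p}$ is sectorial, $-A_{p}$ generates a bounded analytic semigroup and the standard estimates for fractional powers of sectorial operators (see \cite[Chapter 1]{Henry}, \cite[Chapter 2]{Pazy}) furnish constants $C_{\alpha}>0$ and $\omega\in\mathbb{R}$, depending only on $n$, $\Omega$, $p$, $\kappa$, $\kappa_{s}$ and $\alpha$, such that $\|A^{\alpha}_{p}e^{-tA_{p}}\|_{\mathcal{B}(L_{p}(\Omega))}\leq C_{\alpha}t^{-\alpha}e^{\omega t}$ for all $t>0$. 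On $0<t\leq 1$ this gives $\|A^{\alpha}_{p}e^{-tA_{p}}\|_{\mathcal{B}(L_{p}(\Omega))}\leq C_{\alpha}e^{|\omega|}t^{-\alpha}$, and since $1\leq e^{\lambda_{1}t}\leq e^{\lambda_{1}}$ on this interval, the asserted inequality already holds there with a constant of the required kind.

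The substantive step is to show that $\|e^{-tA_{p}}\|_{\mathcal{B}(L_{p}(\Omega))}\leq M_{\delta}e^{-\delta t}$ for all $t\geq 0$ whenever $\lambda_{1}<\delta<\Lambda_{1}$. Since $\mathcal{D}(A_{p})\subset W^{2}_{p}(\Omega)$ embeds compactly into $L_{p}(\Omega)$, the operator $A_{p}$ has compact resolvent and $\sigma(A_{p})$ consists of isolated eigenvalues of finite multiplicity; by elliptic $W^{2}_{r}$-regularity (the coefficients are in $C^{1}(\overline{\Omega})$ and $\partial\Omega\in C^{2}$) together with Sobolev embedding, each eigenfunction of $A_{p}$ lies, after a finite bootstrap, in the domain of the self-adjoint realization $A_{2}$ in $L_{2}(\Omega)$, so every eigenvalue of $A_{p}$ is a real eigenvalue of $A_{2}$, hence is $\geq\Lambda_{1}$, where $\Lambda_{1}>0$ because the form $u\mapsto\int_{\Omega}\kappa|\nabla u|^{2}\,dx+\int_{\partial\Omega}\kappa_{s}|u|^{2}\,dS$ is coercive. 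Thus $\sigma(A_{p})\subset\{z\in\mathbb{C}\ ;\ \mathrm{Re}\,z\geq\Lambda_{1}\}$, and because an analytic semigroup is immediately norm-continuous its growth bound equals the spectral bound of its generator (spectral mapping theorem for eventually norm-continuous semigroups), namely $-\Lambda_{1}$; the stated decay follows, with $M_{\delta}$ depending only on the admissible data and on $\delta$.

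For $t\geq 1$ I would then use the semigroup law to write $A^{\alpha}_{p}e^{-tA_{p}}=\bigl(A^{\alpha}_{p}e^{-A_{p}}\bigr)e^{-(t-1)A_{p}}$, so that $\|A^{\alpha}_{p}e^{-tA_{p}}\|_{\mathcal{B}(L_{p}(\Omega))}\leq C_{\alpha}e^{|\omega|}M_{\delta}e^{\delta}e^{-\delta t}$; since $\delta>\lambda_{1}$ we have $\sup_{t\geq 1}t^{\alpha}e^{-(\delta-\lambda_{1})t}<\infty$, hence $e^{-\delta t}\leq C't^{-\alpha}e^{-\lambda_{1}t}$ on $[1,\infty)$ for a suitable $C'$, and the estimate holds on $[1,\infty)$ as well. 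Taking the larger of the two constants proves the lemma (one may fix $\delta$, hence $M_{\delta}$, in terms of $\lambda_{1}$ and $\Lambda_{1}$, and $\Lambda_{1}$ is determined by $n,\Omega,\kappa,\kappa_{s}$). The only delicate point is the middle step—verifying that the $L_{p}$-spectrum of $A_{p}$ lies in $\{\mathrm{Re}\,z\geq\Lambda_{1}\}$ and that the growth bound of $\{e^{-tA_{p}}\}_{t\geq 0}$ is exactly $-\Lambda_{1}$—while the remainder is routine bookkeeping with constants.
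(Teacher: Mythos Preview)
Your argument is correct. The paper's own proof is a one-line citation to \cite[Theorem~1.4.3]{Henry}, which is precisely the abstract statement that for a sectorial operator $A$ with $\mathrm{Re}\,\sigma(A)>\delta$ one has $\|A^{\alpha}e^{-tA}\|\leq C_{\alpha}t^{-\alpha}e^{-\delta t}$; the author takes this for granted and does not justify why $\sigma(A_{p})\subset[\Lambda_{1},\infty)$ in $L_{p}(\Omega)$. Your write-up supplies exactly that missing verification: compact resolvent from the Rellich--Kondrachov embedding, $p$-independence of the point spectrum via elliptic bootstrapping of eigenfunctions into $\mathcal{D}(A_{2})$, coercivity of the quadratic form to get $\Lambda_{1}>0$, and the equality of spectral bound and growth bound for analytic semigroups. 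The short-time/long-time splitting with the semigroup law is then the standard way to assemble the estimate. So you are not taking a different route so much as unpacking the citation; what you gain is a self-contained argument that makes explicit the one nontrivial ingredient---the location of $\sigma(A_{p})$---that the paper leaves implicit.
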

\begin{proof}
It is \cite[Theorem 1.4.3]{Henry}.
\end{proof}
\begin{lemma}
Let $1<p<\infty$, $0\leq\alpha\leq 1$.
Then
\begin{equation}
X^{\alpha}_{p}(\Omega)\hookrightarrow W^{k}_{r}(\Omega) \ \mathrm{if} \ \frac{1}{p}-\frac{2\alpha-k}{n}\leq\frac{1}{r}\leq\frac{1}{p},
\end{equation}
where $\hookrightarrow$ is the continuous inclusion.
\end{lemma}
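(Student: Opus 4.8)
The plan is to route the statement through the Bessel--potential scale: to realize $X^{\alpha}_{p}(\Omega)$ as a space continuously contained in the Bessel--potential space $H^{2\alpha}_{p}(\Omega)$ by complex interpolation, and then to apply the Sobolev embedding theorem for $H^{2\alpha}_{p}(\Omega)$. The case $\alpha=0$ is trivial, since $X^{0}_{p}(\Omega)=L_{p}(\Omega)=W^{0}_{p}(\Omega)$ and the hypothesis then forces $k=0$, $r=p$; so assume $0<\alpha\leq 1$. The one genuinely elliptic ingredient is the continuous inclusion $X^{1}_{p}(\Omega)=\mathcal{D}(A_{p})\hookrightarrow W^{2}_{p}(\Omega)$: it follows from the Agmon--Douglis--Nirenberg a priori estimate for the Robin boundary value problem, which yields $\|u\|_{W^{2}_{p}(\Omega)}\leq C(\|A_{p}u\|_{L_{p}(\Omega)}+\|u\|_{L_{p}(\Omega)})$ for $u\in\mathcal{D}(A_{p})$, together with $0\in\rho(-A_{p})$ (a consequence of $\Lambda_{1}>0$), which gives $\|u\|_{L_{p}(\Omega)}\leq C\|A_{p}u\|_{L_{p}(\Omega)}$.

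Next I would show $X^{\alpha}_{p}(\Omega)\hookrightarrow H^{2\alpha}_{p}(\Omega)$. The operator $A_{p}$ is a densely defined positive (invertible, sectorial) operator on $L_{p}(\Omega)$ that admits bounded imaginary powers --- in fact a bounded $H^{\infty}$--functional calculus --- because $\kappa\in C^{1}(\overline{\Omega})$ is positive, $\partial\Omega\in C^{2}$, the boundary operator is a regular (Robin) boundary operator and $\Lambda_{1}>0$; this is classical for uniformly elliptic operators under such boundary conditions (Seeley; Amann; Denk--Hieber--Pr\"uss). By the theorem on complex interpolation of domains of fractional powers of operators with bounded imaginary powers, $X^{\alpha}_{p}(\Omega)=\mathcal{D}(A^{\alpha}_{p})=[L_{p}(\Omega),X^{1}_{p}(\Omega)]_{\alpha}$ with equivalent norms. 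Combining this with the first step, the functoriality of complex interpolation, and a bounded extension operator $\Omega\to\mathbb{R}^{n}$ together with restriction (both available since $\partial\Omega\in C^{2}$), one gets
\begin{equation*}
X^{\alpha}_{p}(\Omega)=[L_{p}(\Omega),X^{1}_{p}(\Omega)]_{\alpha}\hookrightarrow[L_{p}(\Omega),W^{2}_{p}(\Omega)]_{\alpha}=H^{2\alpha}_{p}(\Omega).
\end{equation*}

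Finally I would invoke the Sobolev embedding theorem for Bessel--potential spaces: for a non-negative integer $k$ and $p\leq r<\infty$ one has $H^{2\alpha}_{p}(\Omega)\hookrightarrow W^{k}_{r}(\Omega)$ whenever $2\alpha-n/p\geq k-n/r$, equivalently $\frac{1}{p}-\frac{2\alpha-k}{n}\leq\frac{1}{r}$; the critical exponent $\frac{1}{r}=\frac{1}{p}-\frac{2\alpha-k}{n}$ is admissible because the Riesz potential $I_{2\alpha-k}$ is bounded $L_{p}(\mathbb{R}^{n})\to L_{r}(\mathbb{R}^{n})$ at the Hardy--Littlewood--Sobolev exponent, and the hypothesis forces $2\alpha\geq k$ so that $W^{k}_{r}(\Omega)$ is meaningful. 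Chaining this with the displayed embedding gives $X^{\alpha}_{p}(\Omega)\hookrightarrow W^{k}_{r}(\Omega)$ under exactly the stated condition $\frac{1}{p}-\frac{2\alpha-k}{n}\leq\frac{1}{r}\leq\frac{1}{p}$. The main obstacle is the first assertion of the middle paragraph: one genuinely needs $\mathcal{D}(A^{\alpha}_{p})$ to be a \emph{complex} interpolation space --- i.e.\ the bounded imaginary powers of $A_{p}$ --- in order to land in $H^{2\alpha}_{p}(\Omega)$ and thereby reach the endpoint exponent; the cheaper route using only sectoriality, $X^{\alpha}_{p}(\Omega)\hookrightarrow(L_{p}(\Omega),X^{1}_{p}(\Omega))_{\alpha,\infty}\hookrightarrow B^{2\alpha}_{p,\infty}(\Omega)$, would only give the strict inequality $\frac{1}{p}-\frac{2\alpha-k}{n}<\frac{1}{r}$, since $B^{2\alpha}_{p,\infty}(\Omega)$ does not embed into $W^{k}_{r}(\Omega)$ at the critical exponent. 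Hence quoting a suitable reference for the functional calculus of the Robin realization $A_{p}$ is the crux.
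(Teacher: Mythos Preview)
Your argument is correct. The paper does not give a proof at all --- it simply cites \cite[Theorem 1.6.1]{Henry} --- so there is nothing to compare at the level of technique. What you have written is, in effect, a sketch of how one would prove that theorem in the present setting, and the route via bounded imaginary powers, complex interpolation $\mathcal{D}(A^{\alpha}_{p})=[L_{p}(\Omega),\mathcal{D}(A_{p})]_{\alpha}$, and then Sobolev embedding for $H^{2\alpha}_{p}(\Omega)$ is sound.

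Your closing remark is worth highlighting: Henry's Theorem 1.6.1, as stated, uses only sectoriality and the inclusion $\mathcal{D}(A)\subset W^{m}_{p}$, and accordingly gives the embedding under the \emph{strict} inequality $k-n/r<m\alpha-n/p$. The paper quotes the result with $\leq$, which indeed requires the sharper input you identify --- bounded imaginary powers (or an $H^{\infty}$--calculus) for $A_{p}$, so that one lands in $H^{2\alpha}_{p}$ rather than merely $B^{2\alpha}_{p,\infty}$. For the Robin realization of a second-order divergence-form operator with $C^{1}$ coefficients on a $C^{2}$ domain this is available (Seeley; Duong--Robinson; Denk--Hieber--Pr\"uss), so your argument closes the endpoint case that a bare citation of Henry would not.
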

\begin{proof}
It is \cite[Theorem 1.6.1]{Henry}.
\end{proof}
\begin{lemma}
Let $1<p<\infty$, $1<q<\infty$.
Then
\begin{equation}
X_{p,q}(\Omega)=
\begin{cases}
\{u \in B^{2(1-1/q)}_{p,q}(\Omega) \ ; \ \kappa\partial_{\nu}u+\kappa_{s}u|_{\partial\Omega}=0\} & \mathrm{if} \ 2(1-1/q)>1+1/p, \\
\{u \in B^{2(1-1/q)}_{p,q}(\Omega) \ ; \ \kappa\partial_{\tilde{\nu}}u+\tilde{\kappa}_{s}u \in B^{1/p}_{p,q,0}(\Omega)\} & \mathrm{if} \ 2(1-1/q)=1+1/p, \\
B^{2(1-1/q)}_{p,q}(\Omega) & \mathrm{if} \ 2(1-1/q)<1+1/p,
\end{cases}
\end{equation}
where $\tilde{\kappa}_{s} \in C^{1}(\overline{\Omega})$ and $\tilde{\nu} \in C^{2}(\overline{\Omega})$ are extensions of $\kappa_{s}$ and $\nu$ to $\overline{\Omega}$ respectively.
\end{lemma}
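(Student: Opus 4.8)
The plan is to identify $X_{p,q}(\Omega)=(L_p(\Omega),X^1_p(\Omega))_{1-1/q,q}$ with a Besov-type space by combining an abstract interpolation identity for domains of sectorial operators with the standard trace characterization of interpolation spaces between $L_p$ and $W^2_p$ subject to boundary conditions. The starting point is the known fact that since $\mathcal{D}(A_p)=\{u\in W^2_p(\Omega)\,;\,\kappa\partial_\nu u+\kappa_s u|_{\partial\Omega}=0\}$ with the graph norm equivalent to the $W^2_p(\Omega)$-norm (elliptic regularity, already cited via \cite[Theorems 2.5.2 and 7.3.6]{Pazy}), we have $X^1_p(\Omega)=\mathcal{D}(A_p)$ with equivalent norm. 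Hence $X_{p,q}(\Omega)=(L_p(\Omega),\mathcal{D}(A_p))_{1-1/q,q}$, and the task reduces to computing the real interpolation space between $L_p(\Omega)$ and $W^2_p(\Omega)$ with the Robin constraint encoded in the upper endpoint.

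First I would recall the unconstrained identity $(L_p(\Omega),W^2_p(\Omega))_{1-1/q,q}=B^{2(1-1/q)}_{p,q}(\Omega)$, which follows from the definition of the Besov space in the excerpt (reiteration) together with \cite{Adams}. The subtlety is the boundary condition: whether it survives the interpolation depends on whether the trace operators $u\mapsto u|_{\partial\Omega}$ and $u\mapsto\partial_\nu u|_{\partial\Omega}$ are still defined on $B^{2(1-1/q)}_{p,q}(\Omega)$. The relevant threshold is the regularity $2(1-1/q)$ versus $1+1/p$ (so that the normal derivative, which costs one order, has a trace of positive order $1/p$). I would split into three cases exactly as in the statement:

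Case $2(1-1/q)>1+1/p$: here $\partial_\nu u|_{\partial\Omega}$ makes sense (it lies in a Besov space on $\partial\Omega$ of positive order), the Robin condition $\kappa\partial_\nu u+\kappa_s u|_{\partial\Omega}=0$ is a closed condition preserved under interpolation, and a standard argument (e.g. extension/restriction combined with the fact that the annihilator of a surjective trace map interpolates correctly, cf. the technique in \cite{Shibata 2} or classical references on interpolation with boundary conditions) gives $X_{p,q}(\Omega)=\{u\in B^{2(1-1/q)}_{p,q}(\Omega)\,;\,\kappa\partial_\nu u+\kappa_s u|_{\partial\Omega}=0\}$. Case $2(1-1/q)<1+1/p$: now the normal derivative trace is \emph{not} well-defined on $B^{2(1-1/q)}_{p,q}(\Omega)$, the boundary condition imposes no constraint in the limit, and one obtains the full space $B^{2(1-1/q)}_{p,q}(\Omega)$; the key technical point is density of $\mathcal{D}(A_p)$-type functions so that the constraint genuinely disappears. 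Case $2(1-1/q)=1+1/p$ (the critical case): here the normal-derivative trace lands exactly in $B^{1/p}_{p,q}(\partial\Omega)$, but the relevant condition must be formulated so that it is compatible with the support/flatness behavior near $\partial\Omega$; this is why the statement replaces $\kappa\partial_\nu u+\kappa_s u|_{\partial\Omega}=0$ by the membership $\kappa\partial_{\tilde\nu}u+\tilde\kappa_s u\in B^{1/p}_{p,q,0}(\Omega)$, using the $B^r_{p,q,0}$ spaces with the extra $L_p(G^t)$-weighted term from the preliminaries, and using $C^1(\overline\Omega)$, $C^2(\overline\Omega)$ extensions $\tilde\kappa_s$, $\tilde\nu$ so the expression is defined on all of $\Omega$.

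I expect the critical case $2(1-1/q)=1+1/p$ to be the main obstacle: one has to make precise in what sense the boundary constraint passes to the limit, verify that the choice of extensions $\tilde\kappa_s$, $\tilde\nu$ does not affect the resulting space, and match the weighted norm in the definition of $B^{1/p}_{p,q,0}(\Omega)$ with the interpolation norm — this is the classical phenomenon that at the endpoint exponent one must use a "little" or support-type Besov space rather than the full one. The non-critical cases should follow from a clean combination of the reiteration theorem, the trace theorem for Besov spaces, and the standard fact that the kernel of a retraction interpolates to the kernel of the interpolated retraction; I would cite \cite{Adams}, \cite{Bergh}, \cite{Henry} and the analysis technique of \cite{Shibata 2} rather than reproving these, and reduce the whole argument, via a partition of unity and local coordinates flattening $\partial\Omega$, to the half-space model where everything is explicit.
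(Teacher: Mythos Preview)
The paper does not give its own argument for this lemma at all: its entire proof is the single line ``It is \cite[Proposition 1.25 and Theorem 3.5]{Guidetti}.'' So the intended route is a direct citation to Guidetti's characterization of real interpolation spaces between $L_p$ and the domain of a second-order elliptic operator with Robin-type boundary condition; nothing is reproved.

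Your proposal, by contrast, sketches what is essentially the content of that cited result: reducing $X_{p,q}(\Omega)=(L_p(\Omega),\mathcal{D}(A_p))_{1-1/q,q}$ to interpolation between $L_p$ and $W^2_p$ with a boundary constraint, then splitting into the three regimes according to whether the Robin trace survives at regularity $2(1-1/q)$. The broad outline is correct and matches the architecture of Guidetti's proof, so there is no conceptual gap in the non-critical cases. What you would gain by writing this out is self-containment; what the paper gains by citing is brevity and a rigorous treatment of the delicate critical case $2(1-1/q)=1+1/p$, which---as you yourself anticipate---is where a sketch is not enough: the precise identification involving $B^{1/p}_{p,q,0}(\Omega)$, the independence of the choice of extensions $\tilde\kappa_s,\tilde\nu$, and the matching of the weighted norm in $B^{r}_{p,q,0}$ with the interpolation norm require exactly the machinery Guidetti develops (his Proposition~1.25 handles the endpoint spaces, Theorem~3.5 the operator-adapted interpolation). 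If your aim is to match the paper, replace the sketch by the citation; if you want a self-contained argument, be aware that the critical case alone is a nontrivial piece of work and that the references you mention (\cite{Adams}, \cite{Bergh}, \cite{Henry}, \cite{Shibata 2}) do not contain it---you would effectively be reproducing \cite{Guidetti}.
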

\begin{proof}
It is \cite[Proposition 1.25 and Theorem 3.5]{Guidetti}.
\end{proof}

\subsection{Main results}
First, we shall prove the unique solvability of (1.1) in $W^{2,1}_{p,q}(\Omega\times(0,T))$ for any $0<T<\infty$, $1<p<\infty$, $1<q<\infty$ and $L_{p}$-$L_{q}$ estimates for solutions to (1.1) uniformly in $T$.
\begin{theorem}
Let $\Omega$ be a bounded domain in $\mathbb{R}^{n}$ with its $C^{2}$-boundary $\partial\Omega$, $\kappa \in C^{1}(\overline{\Omega})$ satisfy $\kappa>0$ on $\overline{\Omega}$, $\kappa_{s} \in C^{1}(\partial\Omega)$ satisfy $\kappa_{s}>0$ on $\partial\Omega$, $0<T<\infty$, $1<p<\infty$, $1<q<\infty$, $u_{0} \in X_{p,q}(\Omega)$, $f \in L_{q}((0,T);L_{p}(\Omega))$, $g \in H^{1,1/2}_{p,q,0}(\Omega\times(0,T))$.
Then $(1.1)$ has uniquely a solution $u \in W^{2,1}_{p,q}(\Omega\times(0,T))$ satisfying
\begin{equation}
\|u\|_{W^{2,1}_{p,q}(\Omega\times(0,T))}\leq C_{p,q}(\|u_{0}\|_{X_{p,q}(\Omega)}+\|f\|_{L_{q}((0,T);L_{p}(\Omega))}+\|g\|_{H^{1,1/2}_{p,q,0}(\Omega\times(0,T))}),
\end{equation}
where $C_{p,q}$ is a positive constant depending only on $n$, $\Omega$, $p$, $q$, $\kappa$ and $\kappa_{s}$.
\end{theorem}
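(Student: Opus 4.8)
The plan is to reduce (1.1) to an abstract Cauchy problem governed by the operator $A_{p}$ of Section 2.2, and then to solve it on the half-line by combining the maximal $L_{q}$-regularity of $A_{p}$ with the semigroup bounds of Lemma 2.1, so that the resulting constant is automatically independent of $T$. \emph{Step 1: removing the boundary data.} Since $g\in H^{1,1/2}_{p,q,0}(\Omega\times(0,T))$, by definition there is $\tilde{g}\in H^{1,1/2}_{p,q,0}(\Omega\times\mathbb{R}_{+})$ with $\tilde{g}(t)=g(t)$ for $0<t<T$ and $\|\tilde{g}\|_{H^{1,1/2}_{p,q}(\Omega\times\mathbb{R})}\leq 2\|g\|_{H^{1,1/2}_{p,q,0}(\Omega\times(0,T))}$. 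The trace of $H^{1,1/2}_{p,q}(\Omega\times\mathbb{R})$ on $\partial\Omega\times\mathbb{R}$ lies in the natural lateral trace space of $W^{2,1}_{p,q}(\Omega\times\mathbb{R})$ under the conormal map $u\mapsto\kappa\partial_{\nu}u+\kappa_{s}u|_{\partial\Omega}$; solving the corresponding model problems on $\mathbb{R}^{n}$ and $\mathbb{R}^{n}_{+}$ by the Fourier--Laplace transform, verifying $\mathcal{R}$-boundedness of the resulting symbols, and patching by a partition of unity using the $L_{p}$ estimates for the steady problem --- this is where the operator-valued Fourier multiplier theorem on the UMD space $L_{p}$ is used --- produces a bounded linear operator $E\colon H^{1,1/2}_{p,q,0}(\Omega\times\mathbb{R}_{+})\to W^{2,1}_{p,q,0}(\Omega\times\mathbb{R}_{+})$ with $\kappa\partial_{\nu}(E\tilde{g})+\kappa_{s}(E\tilde{g})|_{\partial\Omega}=\tilde{g}|_{\partial\Omega}$ and operator norm independent of $T$. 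Set $w:=(E\tilde{g})|_{\Omega\times(0,T)}$; then $w\in W^{2,1}_{p,q,0}(\Omega\times(0,T))$, its time trace at $t=0$ is $0$ in $X_{p,q}(\Omega)$, and $v:=u-w$ must solve $\partial_{t}v-\mathrm{div}(\kappa\nabla v)=\tilde{f}$ on $(0,T)$ with $\tilde{f}:=f-\partial_{t}w+\mathrm{div}(\kappa\nabla w)$, $\|\tilde{f}\|_{L_{q}((0,T);L_{p}(\Omega))}\leq\|f\|_{L_{q}((0,T);L_{p}(\Omega))}+C\|g\|_{H^{1,1/2}_{p,q,0}(\Omega\times(0,T))}$, initial value $u_{0}$, and the homogeneous Robin condition; since $v(t)$ satisfies the latter, $-\mathrm{div}(\kappa\nabla v(t))=A_{p}v(t)$, so $v$ is a $W^{2,1}_{p,q}$-solution of $\partial_{t}v+A_{p}v=\tilde{f}$ on $(0,T)$, $v(0)=u_{0}$ in $X_{p,q}(\Omega)$.

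\emph{Step 2: solving the abstract problem.} By Lemma 2.2 with $\alpha=1$, $k=2$, $r=p$ we have $\mathcal{D}(A_{p})=X^{1}_{p}(\Omega)\hookrightarrow W^{2}_{p}(\Omega)$; together with the elliptic $L_{p}$ estimate for $-\mathrm{div}(\kappa\nabla\cdot)$ under the Robin condition and with $0\in\rho(A_{p})$ (valid since $\Lambda_{1}>0$), this gives $\mathcal{D}(A_{p})=\{u\in W^{2}_{p}(\Omega):\kappa\partial_{\nu}u+\kappa_{s}u|_{\partial\Omega}=0\}$ with $\|u\|_{W^{2}_{p}(\Omega)}\simeq\|A_{p}u\|_{L_{p}(\Omega)}$, so that the sought $W^{2,1}_{p,q}$-regularity of $v$ is equivalent to $v\in L_{q}((0,T);\mathcal{D}(A_{p}))\cap W^{1}_{q}((0,T);L_{p}(\Omega))$. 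Since $L_{p}(\Omega)$ is UMD and $A_{p}$ is $\mathcal{R}$-sectorial of angle $<\pi/2$ with $0\in\rho(A_{p})$, the Weis operator-valued Fourier multiplier theorem yields maximal $L_{q}$-regularity of $A_{p}$ on $(0,\infty)$; extending $\tilde{f}$ by $0$ to $(0,\infty)$ and restricting the solution back to $(0,T)$, which is legitimate by the Volterra structure of the Duhamel integral, the term $v_{2}(t):=\int_{0}^{t}e^{-(t-s)A_{p}}\tilde{f}(s)\,ds$ obeys $\|v_{2}\|_{L_{q}((0,T);\mathcal{D}(A_{p}))}+\|\partial_{t}v_{2}\|_{L_{q}((0,T);L_{p}(\Omega))}\leq C\|\tilde{f}\|_{L_{q}((0,T);L_{p}(\Omega))}$ with $C$ independent of $T$. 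For $v_{1}(t):=e^{-tA_{p}}u_{0}$, Lemma 2.1 (with $\alpha=0$, using the decay $e^{-\lambda_{1}t}$, for the term $\|v_{1}\|_{L_{q}((0,\infty);L_{p}(\Omega))}$) together with the semigroup characterization $X_{p,q}(\Omega)=(L_{p}(\Omega),\mathcal{D}(A_{p}))_{1-1/q,q}=\{u_{0}:t\mapsto A_{p}e^{-tA_{p}}u_{0}\in L_{q}((0,\infty);L_{p}(\Omega))\}$ gives $\|v_{1}\|_{L_{q}((0,T);\mathcal{D}(A_{p}))}+\|\partial_{t}v_{1}\|_{L_{q}((0,T);L_{p}(\Omega))}\leq C\|u_{0}\|_{X_{p,q}(\Omega)}$, again uniformly in $T$; and the time trace of $v_{1}+v_{2}$ at $t=0$ is $u_{0}$ since $v_{2}(0)=0$.

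\emph{Step 3: conclusion.} Then $u:=v+w\in W^{2,1}_{p,q}(\Omega\times(0,T))$ solves (1.1), and adding the three estimates of Steps 1--2 yields (2.8) with $C_{p,q}$ depending only on $n$, $\Omega$, $p$, $q$, $\kappa$, $\kappa_{s}$; the uniformity in $T$ is inherited from the bounds on $(0,\infty)$. For uniqueness, if $u_{0}=f=g=0$ then $w=0$ and $v=u$ solves $\partial_{t}v+A_{p}v=0$, $v(0)=0$, whence $v\equiv 0$ by the a priori estimate just proved (or by uniqueness for the analytic semigroup $\{e^{-tA_{p}}\}_{t\geq 0}$), so $u\equiv 0$. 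I expect the two non-routine points to be: (i) constructing the lateral lifting $E$ with $T$-independent norm and the correct vanishing for $t<0$, i.e.\ the trace theory linking $H^{1,1/2}_{p,q,0}$ and $W^{2,1}_{p,q,0}$, which is carried out through the half-space model problems, the $L_{p}$ estimates for the steady problem, and the operator-valued Fourier multiplier theorem; and (ii) the maximal $L_{q}$-regularity of $A_{p}$ on the half-line with constant independent of $T$, resting on the UMD property of $L_{p}(\Omega)$ and on $\Lambda_{1}>0$ (so that $A_{p}$ is boundedly invertible and the decay in Lemma 2.1 is genuine). The remaining steps are bookkeeping with the interpolation identities of Lemmas 2.2 and 2.3 and with the restriction maps $(0,\infty)\to(0,T)$.
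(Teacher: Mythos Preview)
Your proposal is correct and shares the same analytical core with the paper's proof: both use the semigroup $e^{-tA_{p}}u_{0}$ for the initial data and rely on model problems in $\mathbb{R}^{n}$ and $\mathbb{R}^{n}_{+}$, solved via Fourier--Laplace transform and operator-valued multipliers and patched by localization, to handle the inhomogeneous boundary data. The organization differs: the paper first subtracts $v=e^{-tA_{p}}u_{0}$ (Theorem~3.2) and then treats $f$ and $g$ \emph{together} in the zero-initial-data problem (Theorem~3.3), building that solver explicitly from the half-space estimates (Lemmas~3.1--3.6) rather than invoking abstract $\mathcal{R}$-sectoriality of $A_{p}$; you instead lift $g$ first via $E$, reduce to the homogeneous Robin condition, and then appeal to abstract maximal $L_{q}$-regularity of $A_{p}$ for the $\tilde{f}$-part. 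Your route is more modular and closer to the standard maximal-regularity framework, but note that establishing $\mathcal{R}$-sectoriality of $A_{p}$ (or, equivalently, constructing your lifting $E$ with $T$-independent norm) requires exactly the half-space and localization analysis the paper carries out in Section~3, so the non-routine work you flag in point~(i) and point~(ii) is precisely what the paper does there. The paper's bundling of $f$ and $g$ has the minor practical advantage that a single localization argument serves both, whereas your decomposition would in effect run it twice (once inside the construction of $E$, once inside the maximal-regularity proof for $A_{p}$).
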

Second, in addition to Theorem 2.1 with $T=\infty$, exponentially weighted $L_{p}$-$L_{q}$ estimates for time-global solutions to (1.1) can be established as follows:
\begin{theorem}
Let $\Omega$ be a bounded domain in $\mathbb{R}^{n}$ with its $C^{2}$-boundary $\partial\Omega$, $\kappa \in C^{1}(\overline{\Omega})$ satisfy $\kappa>0$ on $\overline{\Omega}$, $\kappa_{s} \in C^{1}(\partial\Omega)$ satisfy $\kappa_{s}>0$ on $\partial\Omega$, $1<p<\infty$, $1<q<\infty$, $u_{0} \in X_{p,q}(\Omega)$.
Then there exists a positive constant $\lambda^{0}_{1}$ depending only on $n$, $\Omega$, $p$, $q$, $\kappa$ and $\kappa_{s}$ such that if $e^{\lambda_{1}t}f \in L_{q}(\mathbb{R}_{+};L_{p}(\Omega))$, $e^{\lambda_{1}t}g \in H^{1,1/2}_{p,q,0}(\Omega\times\mathbb{R}_{+})$ for some $0\leq\lambda_{1}\leq \lambda^{0}_{1}$, then $(1.1)$ has uniquely a solution $u \in W^{2,1}_{p,q}(\Omega\times\mathbb{R}_{+})$ satisfying
\begin{equation}
\|e^{\lambda_{1}t}u\|_{W^{2,1}_{p,q}(\Omega\times\mathbb{R}_{+})}\leq C_{p,q,\lambda_{1}}(\|u_{0}\|_{X_{p,q}(\Omega)}+\|e^{\lambda_{1}t}f\|_{L_{q}(\mathbb{R}_{+};L_{p}(\Omega))}+\|e^{\lambda_{1}t}g\|_{H^{1,1/2}_{p,q,0}(\Omega\times\mathbb{R}_{+})}),
\end{equation}
where $C_{p,q,\lambda_{1}}$ is a positive constant depending only on $n$, $\Omega$, $p$, $q$, $\kappa$, $\kappa_{s}$ and $\lambda_{1}$.
\end{theorem}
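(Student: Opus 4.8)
The plan is to absorb the exponential weight into the elliptic operator. Put $\lambda_1^0:=\Lambda_1/2$ and fix $0\le\lambda_1\le\lambda_1^0$, so that $\lambda_1<\Lambda_1$. Setting $v:=e^{\lambda_1 t}u$ (equivalently $u=e^{-\lambda_1 t}v$) and using $\partial_t u=e^{-\lambda_1 t}(\partial_t v-\lambda_1 v)$, one checks that $u\in W^{2,1}_{p,q}(\Omega\times\mathbb{R}_+)$ solves $(1.1)$ if and only if $v\in W^{2,1}_{p,q}(\Omega\times\mathbb{R}_+)$ solves
\begin{equation*}
\partial_t v-\mathrm{div}(\kappa\nabla v)-\lambda_1 v=\widetilde f \ \ \mathrm{in} \ \Omega\times\mathbb{R}_+, \qquad v|_{t=0}=u_0 \ \ \mathrm{in} \ \Omega, \qquad \kappa\partial_\nu v+\kappa_s v|_{\partial\Omega}=\widetilde g \ \ \mathrm{on} \ \partial\Omega\times\mathbb{R}_+ ,
\end{equation*}
where $\widetilde f:=e^{\lambda_1 t}f\in L_q(\mathbb{R}_+;L_p(\Omega))$ and $\widetilde g:=e^{\lambda_1 t}g\in H^{1,1/2}_{p,q,0}(\Omega\times\mathbb{R}_+)$ by hypothesis. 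In the language of section 2.2 this reads $\partial_t v+B_p v=\widetilde f$ with the same Robin boundary condition, where $B_p:=A_p-\lambda_1 I_p=-\mathrm{div}(\kappa\nabla\cdot)-\lambda_1$ differs from $A_p$ by a bounded zeroth-order term. Hence $B_p$ is again strongly elliptic, $\mathcal{D}(B_p)=X^1_p(\Omega)$ with equivalent norms, $B_p$ is sectorial with first eigenvalue $\Lambda_1-\lambda_1>0$, so that $\sigma(B_p)\subset\{\,\mathrm{Re}\,\lambda\ge\Lambda_1-\lambda_1\,\}$, and $(L_p(\Omega),\mathcal{D}(B_p))_{1-1/q,q}=X_{p,q}(\Omega)$ with an equivalent norm. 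Thus Lemmas 2.1, 2.2 and 2.3 hold with $A_p$ replaced by $B_p$, the constants depending in addition on $\lambda_1$ but staying finite for $\lambda_1\in[0,\lambda_1^0]$.

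Next I would establish that the $v$-problem has a unique solution $v\in W^{2,1}_{p,q}(\Omega\times\mathbb{R}_+)$ by repeating, word for word with $A_p$ replaced by $B_p$, the proof of Theorem 2.1 in the case $T=\infty$. That proof splits $v$ into two parts. The first part carries the data $(\widetilde f,\widetilde g)$ with vanishing initial datum: after extending $\widetilde f$ and $\widetilde g$ by zero to $t\in\mathbb{R}$ (legitimate because $\widetilde g\in H^{1,1/2}_{p,q,0}$), one solves the corresponding problem on $\Omega\times\mathbb{R}$ with the operator-valued Fourier multiplier theorem on the UMD space $L_p(\Omega)$ and restricts to $\mathbb{R}_+$; this goes through because $\sqrt{-1}\lambda_2+B_p$ is boundedly invertible for every $\lambda_2\in\mathbb{R}$ (since $\mathrm{Re}\,\sigma(B_p)\ge\Lambda_1-\lambda_1>0$) and the relevant operator families stay $\mathcal{R}$-bounded under the bounded shift. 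The second part is the contribution $e^{-tB_p}u_0=e^{\lambda_1 t}e^{-tA_p}u_0$ of the initial datum; here $u_0$ ranges exactly over the trace space $(L_p(\Omega),\mathcal{D}(B_p))_{1-1/q,q}=X_{p,q}(\Omega)$ for which $e^{-tB_p}u_0\in L_q(\mathbb{R}_+;\mathcal{D}(B_p))\cap W^1_q(\mathbb{R}_+;L_p(\Omega))$ with norm controlled by $\|u_0\|_{X_{p,q}(\Omega)}$, which together with $\mathcal{D}(B_p)=X^1_p(\Omega)\hookrightarrow W^2_p(\Omega)$ (Lemma 2.2) and $\partial_t(e^{-tB_p}u_0)=-B_p e^{-tB_p}u_0$ gives the required regularity. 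Summing the two parts yields
\begin{equation*}
\|v\|_{W^{2,1}_{p,q}(\Omega\times\mathbb{R}_+)}\le C_{p,q,\lambda_1}\bigl(\|u_0\|_{X_{p,q}(\Omega)}+\|\widetilde f\|_{L_q(\mathbb{R}_+;L_p(\Omega))}+\|\widetilde g\|_{H^{1,1/2}_{p,q,0}(\Omega\times\mathbb{R}_+)}\bigr).
\end{equation*}

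Since $e^{-\lambda_1 t}$ and all its $t$-derivatives are bounded on $\mathbb{R}_+$, multiplication by $e^{-\lambda_1 t}$ is a bounded operator on $W^{2,1}_{p,q}(\Omega\times\mathbb{R}_+)$, so $u\mapsto e^{\lambda_1 t}u$ is a bijection between the solutions of $(1.1)$ with $e^{\lambda_1 t}u\in W^{2,1}_{p,q}(\Omega\times\mathbb{R}_+)$ and the solutions of the $v$-problem in $W^{2,1}_{p,q}(\Omega\times\mathbb{R}_+)$. Existence and uniqueness for $(1.1)$ in the stated class therefore follow from those for the $v$-problem, and rewriting the last display in terms of $u$, $f$, $g$ gives $(2.10)$. (Alternatively, uniqueness may be read off Theorem 2.1 by restricting a weighted solution to each finite interval $(0,T)$.)

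The main obstacle is the step "repeat the proof of Theorem 2.1 for $B_p$": one must verify that the whole $L_p$-$L_q$ machinery behind Theorem 2.1, namely the resolvent and $\mathcal{R}$-boundedness estimates for the strongly elliptic operator, the Besov trace estimates for the boundary term, and the operator-valued Fourier multiplier theorem, is unaffected by replacing $A_p$ with $A_p-\lambda_1 I_p$, all constants remaining finite for $\lambda_1\in[0,\lambda_1^0]$. This is precisely where the requirement $\lambda_1<\Lambda_1$ is used, and it is what pins down the admissible threshold $\lambda_1^0$, which, like $\Lambda_1$, depends only on $n$, $\Omega$, $p$, $q$, $\kappa$ and $\kappa_s$; the remaining steps are routine.
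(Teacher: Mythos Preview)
Your substitution $v=e^{\lambda_1 t}u$ and the observation that the $v$-problem is governed by the shifted operator $B_p=A_p-\lambda_1 I_p$ are correct, and the argument can be made to work. However, the paper takes a shorter route. Rather than redoing the whole maximal-regularity machinery for $B_p$, the paper first proves the \emph{unweighted} case $\lambda_1=0$ with zero initial datum (this is Lemma~3.6, built up through the model problems of Lemmas~3.1--3.5), and then upgrades to the weighted estimate by a one-line perturbation argument (Theorem~3.3): writing the equation for $e^{\lambda_1 t}u$ with the extra term $\lambda_1 e^{\lambda_1 t}u$ thrown onto the right-hand side, applying Lemma~3.6, and absorbing $\lambda_1\|e^{\lambda_1 t}u\|_{L_q L_p}$ into the left for $\lambda_1\le\lambda_1^0<1/C$. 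The initial datum is handled separately via Theorem~3.2, just as you do. Your approach buys an explicit (and potentially larger) threshold $\lambda_1^0=\Lambda_1/2$, but at the cost of re-verifying all the resolvent and $\mathcal{R}$-boundedness estimates for $B_p$; the paper's absorption trick avoids this and gives an implicit $\lambda_1^0$ with the stated dependencies. Note also that what you call ``the proof of Theorem~2.1 in the case $T=\infty$'' is not an abstract multiplier argument on $L_p(\Omega)$ as you sketch, but rather the concrete localisation procedure of Section~3; your outline is a legitimate alternative, but it is not quite ``repeating'' the paper's proof.
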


\subsection{Lemmas}
We will state some lemmas which play an important role throughout this paper.
First, the generalized Bochner theorem is stated as follows:
\begin{lemma}
Let $(X,\|\cdot\|_{X})$ be a Banach space, $a=m+\mu-n$, where $m, n \in \mathbb{Z}$, $m\geq 0$, $n\geq 2$, $0<\mu\leq 1$.
Assume that $g \in C^{\infty}(\mathbb{R}^{n}\setminus\{0\};X)$ satisfies
\begin{equation}
\partial^{\gamma}g \in L_{1}(\mathbb{R}^{n};X)
\end{equation}
for any $\gamma \in \mathbb{Z}^{n}$, $\gamma\geq 0$, $|\gamma|\leq m$,
\begin{equation}
\|\partial^{\gamma}_{\xi}g(\xi)\|_{X}\leq C_{\gamma}|\xi|^{a-|\gamma|}
\end{equation}
for any $\xi \in \mathbb{R}^{n}\setminus\{0\}$, $\gamma \in \mathbb{Z}^{n}$, $\gamma\geq 0$, where $C_{\gamma}$ is a positive constant.
Then
\begin{equation}
\|\mathcal{F}^{-1}_{\xi}[g(\xi)](x)\|_{X}\leq C\left(\max_{\gamma \in \mathbb{Z}^{n}, \gamma\geq 0, |\gamma|\leq m+2}C_{\gamma}\right)|x|^{-(m+\mu)}
\end{equation}
for any $x \in \mathbb{R}^{n}\setminus\{0\}$, where $C$ is a positive constant depending only on $m$, $n$ and $\mu$.
\end{lemma}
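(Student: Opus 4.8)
The plan is to prove the decay estimate \eqref{...} by splitting $\mathbb{R}^{n}$ into the region near the origin and the region away from it, and estimating $\mathcal{F}^{-1}_{\xi}[g(\xi)](x)$ on each piece. Throughout I fix $x \in \mathbb{R}^{n}\setminus\{0\}$ and work with the standard trick of integrating by parts to trade powers of $|x|$ for derivatives on $g$. Concretely, for any multi-index $\gamma$ one has $(\sqrt{-1}x)^{\gamma}\mathcal{F}^{-1}_{\xi}[g](x) = \mathcal{F}^{-1}_{\xi}[\partial^{\gamma}_{\xi}g](x)$ up to the normalizing constant, which is legitimate once $\partial^{\gamma}_{\xi}g \in L_{1}(\mathbb{R}^{n};X)$ — this is exactly what hypothesis \eqref{...} (with $|\gamma|\le m$) buys us. Summing over $|\gamma|=m$ gives $|x|^{m}\|\mathcal{F}^{-1}_{\xi}[g](x)\|_{X} \le C\sum_{|\gamma|=m}\|\partial^{\gamma}_{\xi}g\|_{L_{1}(\mathbb{R}^{n};X)}$, but this crude bound is not enough; we need the extra decay $|x|^{-\mu}$, so the $L_{1}$-norms must be localized in $x$-dependent fashion.

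The key step is therefore a dyadic/cutoff decomposition adapted to $|x|$. Choose a smooth cutoff and write $g = g\chi_{\{|\xi|\le 1/|x|\}} + g\chi_{\{|\xi|>1/|x|\}}$ (smoothed out). On the inner region, use $\|\mathcal{F}^{-1}_{\xi}[g\chi](x)\|_{X} \le C\int_{|\xi|\lesssim 1/|x|}\|g(\xi)\|_{X}\,d\xi \le C\int_{|\xi|\lesssim 1/|x|}|\xi|^{a}\,d\xi$ by \eqref{...}; since $a = m+\mu-n > -n$, this integral converges at $0$ and equals $C|x|^{-(a+n)} = C|x|^{-(m+\mu)}$, which is the desired bound. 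On the outer region, integrate by parts $m+2$ times: each integration by parts produces a factor $|x|^{-1}$ and moves one $\xi$-derivative onto $g\chi$; by the Leibniz rule and the bounds \eqref{...} on $\partial^{\gamma}_{\xi}g$ (note derivatives of the cutoff at scale $1/|x|$ contribute factors $|x|^{|\beta|}$ that combine correctly with $|\xi|^{-|\beta|}\sim |x|^{|\beta|}$ on the support), one gets $\|\partial^{\gamma}_{\xi}(g\chi)(\xi)\|_{X}\lesssim (\max C_{\gamma})|\xi|^{a-|\gamma|}$ uniformly. Hence $\|\mathcal{F}^{-1}_{\xi}[g(1-\chi)](x)\|_{X} \le C|x|^{-(m+2)}(\max_{|\gamma|\le m+2}C_{\gamma})\int_{|\xi|\gtrsim 1/|x|}|\xi|^{a-(m+2)}\,d\xi$; since $a-(m+2) = \mu-n-2 < -n$, the integral converges at infinity and equals $C|x|^{-(a-(m+2)+n)} = C|x|^{m+2-(m+\mu)}$, so the prefactor $|x|^{-(m+2)}$ restores exactly $|x|^{-(m+\mu)}$. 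Adding the two contributions and tracking that every constant depends only on $m$, $n$, $\mu$ (the dimensional integrals and the number of terms in Leibniz) gives \eqref{...}.

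The main obstacle is bookkeeping in the outer region: one must verify that differentiating the cutoff $\chi$ (supported in an annulus of radius $\sim 1/|x|$, with $\|\partial^{\beta}\chi\|_{\infty}\lesssim |x|^{|\beta|}$) does not destroy the homogeneity bound $|\xi|^{a-|\gamma|}$, i.e. that on $\mathrm{supp}\,\partial^{\beta}\chi$ we have $|\xi|\sim 1/|x|$ so $|x|^{|\beta|}\sim |\xi|^{-|\beta|}$ and the product $\|\partial^{\gamma-\beta}_{\xi}g(\xi)\|_{X}\,|\partial^{\beta}\chi(\xi)| \lesssim |\xi|^{a-|\gamma|+|\beta|}\cdot|\xi|^{-|\beta|} = |\xi|^{a-|\gamma|}$ as needed — this is where all the hypotheses on the number of available derivatives ($m+2$ rather than $m$) and the positivity/size constraints on $a$ get used, and it is worth writing out carefully. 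Everything else (convergence of the two radial integrals, passing the Fourier inversion and integration by parts through the $X$-valued Bochner integral, which is justified by \eqref{...}) is routine.
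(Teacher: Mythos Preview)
The paper does not actually prove this lemma: its entire proof reads ``It is \cite[Theorem 2.3]{Shibata 1}.'' So there is no in-paper argument to compare against beyond the citation.

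Your proposal is correct and is precisely the standard proof of this generalized Bochner-type estimate (and almost certainly how the cited reference proceeds). The splitting at scale $|\xi|\sim 1/|x|$, the direct $L_{1}$ bound on the low-frequency piece using $a+n=m+\mu>0$, and the $(m+2)$-fold integration by parts on the high-frequency piece using $a-(m+2)+n=\mu-2<0$ are exactly right, and your Leibniz bookkeeping for the cutoff derivatives (using $|\xi|\sim 1/|x|$ on the support of $\partial^{\beta}\chi$) is the correct way to handle the commutator terms. The only cosmetic point worth tightening when you write it out in full is to make explicit that the $(m+2)$-fold integration by parts is justified on the outer piece not by the $L_{1}$ hypothesis (which only covers $|\gamma|\le m$) but because $g(1-\chi)$ is supported away from the origin, so all derivatives exist and the pointwise bound $\|\partial^{\gamma}_{\xi}g(\xi)\|_{X}\le C_{\gamma}|\xi|^{a-|\gamma|}$ with $a-(m+2)<-n$ gives the needed integrability and decay at infinity.
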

\begin{proof}
It is \cite[Theorem 2.3]{Shibata 1}.
\end{proof}
Second, in order to give the operator-valued Fourier multiplier theorem, we introduce an UMD space and the $\mathcal{R}$-boundedness.
A Banach space $(X,\|\cdot\|_{X})$ is called an UMD space if $X$ satisfies the following property (HT) which is equivalent to the UMD property \cite[Theorem]{Bourgain}, \cite[Theorem 2]{Burkholder}:
\begin{itemize}
\item[(HT)]The Hilbert transform
\begin{equation*}
Hf(t)=\frac{1}{\pi}\mathrm{p.v.}\int_{\mathbb{R}}\frac{f(s)}{t-s}ds, \ \mathcal{D}(H)=\mathcal{S}(\mathbb{R};X)
\end{equation*}
is extended to a bounded linear operator $H \in \mathcal{B}(L_{q}(\mathbb{R};X))$ for some $1<q<\infty$.
\end{itemize}
It is well known in \cite[Condition (iii)]{Burkholder} that typical examples of UMD spaces are $l_{p}$ and $L_{p}(G)$ for any $1<p<\infty$.
Let $\mathcal{T}$ be a subset of $\mathcal{B}(X;Y)$.
$\mathcal{T}$ is called $\mathcal{R}$-bounded on $\mathcal{B}(X;Y)$ if $\mathcal{T}$ satisfies the following property $(\mathcal{R})$:
\begin{itemize}
\item[$(\mathcal{R})$]There exists a positive constant $C$ such that
\begin{equation*}
\int^{1}_{0}\left\|\sum^{N}_{j=1}r_{j}(t)T_{j}x_{j}\right\|_{Y}dt\leq C\int^{1}_{0}\left\|\sum^{N}_{j=1}r_{j}(t)x_{j}\right\|_{X}dt
\end{equation*}
for any $T_{j} \in \mathcal{T}$, $x_{j} \in X$ $(j=1, \cdots, N)$, $N \in \mathbb{Z}$, $N\geq 1$, where $r_{j}$ is the Rademacher function defined as $r_{j}(t)=\mathrm{sign}(\sin(2^{j}\pi t))$.
\end{itemize}
In the case where $\mathcal{T}$ is $\mathcal{R}$-bounded on $\mathcal{B}(X;Y)$, the smallest constant $C$ for which $(\mathcal{R})$ holds is denoted by $\mathcal{R}(\mathcal{T})$.
The operator-valued Fourier multiplier theorem on UMD spaces is stated as follows:
\begin{lemma}
Let $(X,\|\cdot\|_{X})$ and $(Y,\|\cdot\|_{Y})$ be UMD spaces, $M \in C^{1}(\mathbb{R}\setminus\{0\};\mathcal{B}(X;Y))$.
Assume that $\{M(t)\}_{t \in \mathbb{R}\setminus\{0\}}$ and $\{td_{t}M(t)\}_{t \in \mathbb{R}\setminus\{0\}}$ are $\mathcal{R}$-bounded on $\mathcal{B}(X;Y)$,
\begin{equation*}
\mathcal{R}(\{M(t)\}_{t \in \mathbb{R}\setminus\{0\}})=C_{0},
\end{equation*}
\begin{equation*}
\mathcal{R}(\{td_{t}M(t)\}_{t \in \mathbb{R}\setminus\{0\}})=C_{1}.
\end{equation*}
Then $K=\mathcal{F}^{-1}M\mathcal{F}$, $\mathcal{D}(K)=C^{\infty}_{0}(\mathbb{R}\setminus\{0\};X)$ is extended to a bounded linear operator $K \in \mathcal{B}(L_{q}(\mathbb{R};X);L_{q}(\mathbb{R};Y))$ for any $1<q<\infty$,
\begin{equation}
\|K\|_{\mathcal{B}(L_{q}(\mathbb{R};X);L_{q}(\mathbb{R};Y))}\leq C(C_{0}+C_{1}),
\end{equation}
where $C$ is a positive constant depending only on $X$, $Y$ and $q$.
\end{lemma}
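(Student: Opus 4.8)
The plan is to follow the standard route to operator-valued Mikhlin-type multiplier theorems: a dyadic Littlewood--Paley decomposition in the variable $t$, the UMD property (through (HT)) to pass to randomized square functions, and the two $\mathcal{R}$-boundedness hypotheses to control the resulting operator-valued convolution kernels. Since $C^{\infty}_{0}(\mathbb{R}\setminus\{0\};X)$ is dense in $L_{q}(\mathbb{R};X)$, it suffices to prove $(2.11)$ for $f \in C^{\infty}_{0}(\mathbb{R}\setminus\{0\};X)$ and then extend $K$ by continuity; note also that $\{M(t)\}_{t \in \mathbb{R}\setminus\{0\}}$ being $\mathcal{R}$-bounded forces $\|M(t)\|_{\mathcal{B}(X;Y)}\leq C_{0}$, so $K$ is well defined on this domain. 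Fix a dyadic resolution of unity $\{\psi_{j}\}_{j \in \mathbb{Z}} \subset C^{\infty}_{0}(\mathbb{R}\setminus\{0\})$ with $\psi_{j}(\tau)=\psi_{0}(2^{-j}\tau)$, $\mathrm{supp}\,\psi_{0}\subset\{1/2\leq|\tau|\leq 2\}$, $\sum_{j \in \mathbb{Z}}\psi_{j}\equiv 1$ on $\mathbb{R}\setminus\{0\}$, together with slightly fatter companions $\tilde{\psi}_{j}(\tau)=\tilde{\psi}_{0}(2^{-j}\tau)$, $\tilde{\psi}_{0}\equiv 1$ on $\mathrm{supp}\,\psi_{0}$, and set $\Delta_{j}g=\mathcal{F}^{-1}[\psi_{j}\mathcal{F}g]$, $\tilde{\Delta}_{j}g=\mathcal{F}^{-1}[\tilde{\psi}_{j}\mathcal{F}g]$, so that $\Delta_{j}=\Delta_{j}\tilde{\Delta}_{j}$ and $M\psi_{j}=(M\psi_{j})\tilde{\psi}_{j}$.

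Next I would record the randomized Littlewood--Paley equivalence on a UMD space $Z$: for $g \in L_{q}(\mathbb{R};Z)$,
\begin{equation*}
\|g\|_{L_{q}(\mathbb{R};Z)}\leq C\int^{1}_{0}\Big\|\sum_{j \in \mathbb{Z}}r_{j}(\tau)\tilde{\Delta}_{j}g\Big\|_{L_{q}(\mathbb{R};Z)}d\tau,\qquad \int^{1}_{0}\Big\|\sum_{j \in \mathbb{Z}}r_{j}(\tau)\Delta_{j}g\Big\|_{L_{q}(\mathbb{R};Z)}d\tau\leq C\|g\|_{L_{q}(\mathbb{R};Z)}.
\end{equation*}
This follows by applying the scalar, $Z$-valued Fourier multiplier theorem to the multipliers $\sum_{j}\varepsilon_{j}\psi_{j}$ with $\varepsilon_{j}\in\{-1,1\}$, whose $L_{q}(\mathbb{R};Z)$-boundedness uniformly in $(\varepsilon_{j})_{j}$ is a standard consequence of (HT), followed by Fubini's theorem and Kahane's contraction principle. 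Applying the estimate with $Z=Y$ to $g=Kf$ and using $\Delta_{j}Kf=\mathcal{F}^{-1}[\psi_{j}M\mathcal{F}f]=\mathcal{F}^{-1}[\psi_{j}M]\ast\tilde{\Delta}_{j}f$ (up to a fixed normalizing constant), the theorem is reduced to the single estimate
\begin{equation*}
\int^{1}_{0}\Big\|\sum_{j \in \mathbb{Z}}r_{j}(\tau)\,\mathcal{F}^{-1}[\psi_{j}M]\ast\tilde{\Delta}_{j}f\Big\|_{L_{q}(\mathbb{R};Y)}d\tau\leq C(C_{0}+C_{1})\int^{1}_{0}\Big\|\sum_{j \in \mathbb{Z}}r_{j}(\tau)\tilde{\Delta}_{j}f\Big\|_{L_{q}(\mathbb{R};X)}d\tau,
\end{equation*}
since the first inequality above, applied with $Z=X$, bounds the right-hand side by $C(C_{0}+C_{1})\|f\|_{L_{q}(\mathbb{R};X)}$.

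The main obstacle is this last estimate, and it is where both $\mathcal{R}$-boundedness hypotheses are consumed. Writing $\mathcal{F}^{-1}[\psi_{j}M](s)=(2\pi)^{-1/2}\int_{\mathbb{R}}e^{\sqrt{-1}s\tau}\psi_{j}(\tau)M(\tau)d\tau$ and integrating by parts once — legitimate because $M \in C^{1}(\mathbb{R}\setminus\{0\};\mathcal{B}(X;Y))$ — one rewrites this kernel as an absolutely convergent $\mathcal{B}(X;Y)$-valued integral of members of $\{M(\tau)\}_{\tau \in \mathbb{R}\setminus\{0\}}$ and $\{\tau d_{\tau}M(\tau)\}_{\tau \in \mathbb{R}\setminus\{0\}}$ against scalar weights of total mass $\leq C$ whose $s$-profile is concentrated at scale $2^{-j}$; by Kahane's contraction principle and the stability of $\mathcal{R}$-bounds under averaging against finite measures (the averaged operator lies in a bounded multiple of the closed absolutely convex hull of the given family), the operator families entering $\mathcal{F}^{-1}[\psi_{j}M](s)$ are then $\mathcal{R}$-bounded on $\mathcal{B}(X;Y)$ with bound $\leq C(C_{0}+C_{1})$, uniformly in $j$ and, against those weights, in $s$. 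Minkowski's integral inequality and Fubini's theorem move the convolution outside the $L_{q}(dt)$-norm and the Rademacher average, and Kahane's contraction principle applied in the variable $s$ converts the $\mathcal{R}$-bound into the claimed inequality; tracing constants yields $(2.11)$ with $C$ depending only on $X$, $Y$ and $q$. The genuinely delicate point — typical of the minimal-smoothness ($C^{1}$) setting — is that the kernels $\mathcal{F}^{-1}[\psi_{j}M]$ fail to be uniformly bounded in $L_{1}(\mathbb{R};\mathcal{B}(X;Y))$, which is exactly why the argument must be routed through the randomized Littlewood--Paley equivalence rather than through a naive kernel bound, and why one must track $\mathcal{R}$-bounds rather than operator norms throughout.
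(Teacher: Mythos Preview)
The paper does not actually prove this lemma: its entire proof is the one-line citation ``It is \cite[Theorem 3.4]{Weis}.'' Your proposal goes well beyond this and correctly identifies the architecture of Weis's own argument --- dyadic Littlewood--Paley decomposition, the randomized square-function equivalence available in UMD spaces, and consumption of the two $\mathcal{R}$-boundedness hypotheses at the level of the dyadic blocks.

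There is, however, a real gap in your final paragraph. A single integration by parts shows only that the rescaled kernel $\tilde{k}_j(u)=\int e^{iu\sigma}\psi_0(\sigma)M(2^j\sigma)\,d\sigma$ lies in $C\min(1,|u|^{-1})$ times the closed absolutely convex hull of $\{M(\tau)\}\cup\{\tau M'(\tau)\}$, and $\min(1,|u|^{-1})$ is \emph{not} integrable in $u$; equivalently, for each fixed $s$ the family $\{\mathcal{F}^{-1}[\psi_j M](s):j\in\mathbb{Z}\}$ has $\mathcal{R}$-bound of order $|s|^{-1}$, again not integrable. Hence the claim of ``scalar weights of total mass $\leq C$'' is false under a bare $C^1$ hypothesis on $M$, and the Minkowski-plus-Kahane step does not close --- there is a logarithmic divergence. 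Weis's proof avoids kernel estimates entirely. On each block one writes $M(\tau)=M(2^j)+\int_{2^j}^{\tau}s^{-1}[sM'(s)]\,ds$, so that
\[
K_j=M(2^j)\,\Delta_j+\int_{2^{j-1}}^{2^{j+1}}s^{-1}\,[sM'(s)]\,P_{s,j}\,ds,
\]
where $P_{s,j}$ is the Fourier multiplier with symbol $\psi_j$ times a half-line indicator. The crucial point --- and the second place UMD enters, beyond the Littlewood--Paley equivalence --- is that the half-line projections $\{P_{s,j}\}_{s,j}$ are $\mathcal{R}$-bounded on $L_q(\mathbb{R};X)$, being built from modulated Hilbert transforms. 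Since $\int_{2^{j-1}}^{2^{j+1}}s^{-1}\,ds=2\log 2$ is finite, this exhibits $K_j$ as a bounded average of members of an $\mathcal{R}$-bounded family, whence $\{K_j\}_j$ is $\mathcal{R}$-bounded on $\mathcal{B}(L_q(\mathbb{R};X);L_q(\mathbb{R};Y))$ with bound $\leq C(C_0+C_1)$, and the randomized estimate follows. Your outline has the right shape; the missing idea is this fundamental-theorem-of-calculus decomposition in place of integration by parts on the kernel.
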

\begin{proof}
It is \cite[Theorem 3.4]{Weis}.
\end{proof}
It is very useful on families of convolution operators that we give the following sufficient condition for the $\mathcal{R}$-boundedness:
\begin{lemma}
Set $\mathcal{M}=\{M(t)\}_{t \in \mathbb{R}\setminus\{0\}}$,
\begin{equation*}
(M(t)f)(x)=\int_{\mathbb{R}^{n}}P(x-y,t)f(y)dy, \ P(\cdot,t) \in L_{1,loc}(\mathbb{R}^{n}), \ \mathcal{D}(M(t))=L_{2}(\mathbb{R}^{n}).
\end{equation*}
Assume that there exists a positive constant $C_{0}$ such that
\begin{equation}
\|M(t)\|_{\mathcal{B}(L_{2}(\mathbb{R}^{n}))}\leq C_{0}
\end{equation}
for any $t \in \mathbb{R}\setminus\{0\}$,
\begin{equation}
\sum_{\beta \in \mathbb{Z}^{n}, \beta\geq 0, |\beta|=1}|\partial^{\beta}_{x}P(x,t)|\leq C_{0}|x|^{-(n+1)}
\end{equation}
for any $x \in \mathbb{R}^{n}\setminus\{0\}$, $t \in \mathbb{R}\setminus\{0\}$.
Then $\mathcal{M}$ is $\mathcal{R}$-bounded on $\mathcal{B}(L_{p}(\mathbb{R}^{n}))$ for any $1<p<\infty$.
Moreover, $\mathcal{R}(\mathcal{M})\leq CC_{0}$, where $C$ is a positive constant depending only on $n$ and $p$.
\end{lemma}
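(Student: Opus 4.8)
The plan is to deduce $\mathcal{R}$-boundedness on $\mathcal{B}(L_{p}(\mathbb{R}^{n}))$ from the classical vector-valued Calderón--Zygmund theory together with the Kahane contraction principle, exploiting the fact that each $M(t)$ is a convolution operator. First I would recall the standard characterization: a family $\mathcal{M}\subset\mathcal{B}(L_{p}(\mathbb{R}^{n}))$ of convolution operators is $\mathcal{R}$-bounded provided the single operator $\widetilde{M}$ acting on $\ell^{2}$-valued functions by $\widetilde{M}(f_{j})_{j}=(M(t_{j})f_{j})_{j}$ is bounded on $L_{p}(\mathbb{R}^{n};\ell^{2})$ uniformly in the choice of the $t_{j}$; this follows because, by Khintchine's inequality in $L_{p}$, the $\mathcal{R}$-bound is comparable to the norm of $\widetilde{M}$ on $L_{p}(\mathbb{R}^{n};\ell^{2})$, independently of $N$ and of the sequence $(t_{j})$. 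So it suffices to prove a vector-valued (Hilbert-space-valued) weak $(1,1)$ and strong $(2,2)$ bound for the operator with kernel $\widetilde{P}(x)=\bigl(P(x,t_{j})\bigr)_{j}$, viewed as a $\mathcal{B}(\mathbb{C};\ell^{2})$-valued singular integral.

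The strong $(2,2)$ bound on $L_{2}(\mathbb{R}^{n};\ell^{2})$ is immediate from hypothesis (2.18): by Plancherel and the diagonal structure, $\|\widetilde{M}(f_{j})_{j}\|_{L_{2}(\ell^{2})}^{2}=\sum_{j}\|M(t_{j})f_{j}\|_{L_{2}}^{2}\leq C_{0}^{2}\sum_{j}\|f_{j}\|_{L_{2}}^{2}$. For the Calderón--Zygmund machinery I need the Hörmander-type kernel condition in the $\mathcal{B}(\mathbb{C};\ell^{2})$ operator norm, i.e. $\int_{|x|\geq 2|y|}\|\widetilde{P}(x-y)-\widetilde{P}(x)\|_{\ell^{2}}\,dx\leq C$ uniformly in $y$. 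Here is where hypothesis (2.19) enters: for each fixed $t$, $|\nabla_{x}P(x,t)|\leq C_{0}|x|^{-(n+1)}$, and since the bound is uniform in $t$ we get $\|\nabla_{x}\widetilde{P}(x)\|_{\ell^{2}(\text{over }t_{j})}\leq$ (for any finite subfamily) the same pointwise estimate scaled by the number of terms — no, more carefully: the gradient bound is pointwise in $t$ with the \emph{same} constant $C_{0}$, so $\partial^{\beta}_{x}\widetilde{P}(x)$ has $\ell^{\infty}$-norm $\leq C_{0}|x|^{-(n+1)}$, hence also controls the relevant Hörmander integral after the standard mean-value estimate $\|\widetilde{P}(x-y)-\widetilde{P}(x)\|\leq |y|\sup_{[x-y,x]}\|\nabla\widetilde{P}\|$. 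Combining the $L_{2}$ bound with the kernel smoothness, the vector-valued Calderón--Zygmund theorem (Benedek--Calderón--Panzone) gives boundedness of $\widetilde{M}$ on $L_{p}(\mathbb{R}^{n};\ell^{2})$ for all $1<p<\infty$, with norm bounded by $CC_{0}$ where $C=C(n,p)$.

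The main obstacle is the bookkeeping in the reduction from $\mathcal{R}$-boundedness to the $\ell^{2}$-valued operator bound, specifically ensuring that the Calderón--Zygmund constant one obtains depends only on $n$ and $p$ (through the constant $C_0$ in (2.18)--(2.19)) and is genuinely independent of $N$ and of the chosen points $t_{1},\dots,t_{N}$ — this is exactly what makes the final $\mathcal{R}$-bound $CC_{0}$ rather than something growing with $N$. The point is that (2.19) is a pointwise-in-$t$ estimate with one fixed constant, so the $\ell^{\infty}$-norm (and hence the $\ell^{2}$-norm over any finite index set, after we note we only ever need the $\mathcal{B}(\mathbb{C};\ell^2)$ kernel norm, which is the $\ell^{2}$-norm of the coefficient sequence) is controlled uniformly; one has to be slightly careful because $\|(P(x,t_j))_j\|_{\ell^2}$ a priori grows like $\sqrt{N}\,|x|^{-(n+1)}$, but this is harmless since in the vector-valued CZ argument only the \emph{off-diagonal} kernel bound and the $L_2$ bound matter, and the $L_2$ bound already encodes the $\ell^2$-summation correctly. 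I would present this carefully, then invoke \cite[Theorem 3.4]{Weis}-style references or the Benedek--Calderón--Panzone theorem for the vector-valued extension, and conclude $\mathcal{R}(\mathcal{M})\leq CC_{0}$.
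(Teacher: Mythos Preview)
The paper does not give an independent proof of this lemma; it simply cites \cite[Proposition~2.4]{Shibata 2}. Your overall strategy---reduce $\mathcal{R}$-boundedness on $L_{p}$ via Khintchine's inequality to uniform boundedness of the diagonal operator $\widetilde{M}:(f_{j})\mapsto (M(t_{j})f_{j})$ on $L_{p}(\mathbb{R}^{n};\ell^{2})$, then invoke the vector-valued Calder\'on--Zygmund theorem of Benedek--Calder\'on--Panzone---is the standard one and is almost certainly what lies behind the cited result.

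There is, however, a genuine confusion in your kernel bookkeeping that you flag but do not resolve correctly. The operator $\widetilde{M}$ maps $L_{p}(\mathbb{R}^{n};\ell^{2})$ to itself, so its kernel $\widetilde{P}(x)$ takes values in $\mathcal{B}(\ell^{2};\ell^{2})$, not in $\mathcal{B}(\mathbb{C};\ell^{2})$ as you write. Because $\widetilde{M}$ acts \emph{diagonally}, $\widetilde{P}(x)=\mathrm{diag}\bigl(P(x,t_{1}),\dots,P(x,t_{N})\bigr)$, and the operator norm of a diagonal matrix on $\ell^{2}$ is the $\ell^{\infty}$-norm of its entries:
\[
\|\nabla_{x}\widetilde{P}(x)\|_{\mathcal{B}(\ell^{2})}=\sup_{1\le j\le N}|\nabla_{x}P(x,t_{j})|\le C_{0}|x|^{-(n+1)},
\]
uniformly in $N$ and in the choice of $t_{1},\dots,t_{N}$. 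This is precisely what makes the H\"ormander constant, and hence the Calder\'on--Zygmund constant, independent of $N$. Your worry about an $\ell^{2}$-norm of size $\sqrt{N}\,|x|^{-(n+1)}$ arises only from having identified the wrong target space for the kernel, and the hand-wave ``the $L_{2}$ bound already encodes the $\ell^{2}$-summation correctly'' does not repair it: the weak-$(1,1)$ step in Benedek--Calder\'on--Panzone uses the kernel smoothness bound directly and would inherit the spurious $\sqrt{N}$ if the $\mathcal{B}(\mathbb{C};\ell^{2})$ viewpoint were the relevant one. Once you correct the kernel space to $\mathcal{B}(\ell^{2})$ and use its diagonal structure, the argument goes through cleanly and yields $\mathcal{R}(\mathcal{M})\le C(n,p)\,C_{0}$ as claimed.
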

\begin{proof}
It is \cite[Proposition 2.4]{Shibata 2}.
\end{proof}
Third, some interpolation inequalities in Bessel-potential spaces are stated as follows:
\begin{lemma}
Let $G$ be an open set in $\mathbb{R}^{n}$, $1<p<\infty$, $1<q<\infty$, $0<s<1$, $0<r\leq 1$.
Then
\begin{equation}
\|u\|_{H^{s}_{q}(\mathbb{R};L_{p}(G))}\leq C(r^{1-s}\|\partial_{t}u\|_{L_{q}(\mathbb{R};L_{p}(G))}+r^{-s}\|u\|_{L_{q}(\mathbb{R};L_{p}(G))}),
\end{equation}
\begin{equation}
\|u\|_{H^{s}_{q}(\mathbb{R};L_{p}(G))}\leq C\|u\|^{s}_{H^{1}_{q}(\mathbb{R};L_{p}(G))}\|u\|^{1-s}_{L_{q}(\mathbb{R};L_{p}(G))}
\end{equation}
for any $u \in H^{1}_{q}(\mathbb{R};L_{p}(G))$, where $C$ is a positive constant depending only on $G$, $q$ and $s$.
\end{lemma}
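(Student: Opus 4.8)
The plan is to derive (2.21) from the operator-valued Fourier multiplier theorem (Lemma 2.7), applied to a scalar symbol that is built so as to produce exactly the two parameters appearing on the right-hand side, and then to obtain (2.22) from (2.21) by minimising over the auxiliary parameter $r$. Throughout we use that $L_{p}(G)$ is a UMD space, as recorded after the definition of (HT).

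\emph{Reduction of (2.21) to a uniform symbol estimate.} Since the hypothesis is stated for $u \in H^{1}_{q}(\mathbb{R};L_{p}(G))$, the derivative $\partial_{t}u$ lies in $L_{q}(\mathbb{R};L_{p}(G))$; moreover $C^{\infty}_{0}(\mathbb{R})\otimes L_{p}(G) \subset \mathcal{S}(\mathbb{R};L_{p}(G))$ is dense in $H^{1}_{q}(\mathbb{R};L_{p}(G))$, so by continuity of both sides in the $H^{1}_{q}$-norm it suffices to prove (2.21) for $u \in \mathcal{S}(\mathbb{R};L_{p}(G))$. For $0<r\leq 1$ write, on the Fourier side, $(1+\lambda_{2}^{2})^{s/2}=M_{r}(\lambda_{2})(r^{-s}+r^{1-s}\sqrt{-1}\lambda_{2})$ with $M_{r}(\lambda_{2}):=(1+\lambda_{2}^{2})^{s/2}/(r^{-s}+r^{1-s}\sqrt{-1}\lambda_{2})$; this is legitimate because the denominator has real part $r^{-s}>0$, hence never vanishes, and $M_{r}\in C^{\infty}(\mathbb{R};\mathbb{C})$. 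Then $\langle d_{t}\rangle^{s}u=\mathcal{F}^{-1}_{\lambda_{2}}[M_{r}(\lambda_{2})(r^{-s}\mathcal{F}_{t}[u](\lambda_{2})+r^{1-s}\mathcal{F}_{t}[\partial_{t}u](\lambda_{2}))]$. Consequently, once $M_{r}$ is shown to be a Fourier multiplier on $L_{q}(\mathbb{R};L_{p}(G))$ whose operator norm is bounded by a constant independent of $r\in(0,1]$, one gets $\|\langle d_{t}\rangle^{s}u\|_{L_{q}(\mathbb{R};L_{p}(G))}\leq C(r^{-s}\|u\|_{L_{q}(\mathbb{R};L_{p}(G))}+r^{1-s}\|\partial_{t}u\|_{L_{q}(\mathbb{R};L_{p}(G))})$; adding $\|u\|_{L_{q}(\mathbb{R};L_{p}(G))}$ and using $r^{-s}\geq 1$ (which is where $r\leq 1$ is used) gives (2.21).

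\emph{The uniform-in-$r$ symbol estimate — the main step.} Since $L_{p}(G)$ is UMD, Kahane's contraction principle shows that the scalar families $\{M_{r}(\lambda_{2})I\}_{\lambda_{2}\neq 0}$ and $\{\lambda_{2}M_{r}'(\lambda_{2})I\}_{\lambda_{2}\neq 0}$ (with $I$ the identity on $L_{p}(G)$) are $\mathcal{R}$-bounded on $\mathcal{B}(L_{p}(G))$, with $\mathcal{R}$-bounds controlled by $\sup_{\lambda_{2}}|M_{r}(\lambda_{2})|$ and $\sup_{\lambda_{2}}|\lambda_{2}M_{r}'(\lambda_{2})|$ respectively; then Lemma 2.7 applies with $X=Y=L_{p}(G)$. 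To bound $|M_{r}|$ I use $|M_{r}(\lambda_{2})|=r^{s}(1+\lambda_{2}^{2})^{s/2}/\sqrt{1+r^{2}\lambda_{2}^{2}}$ together with the elementary inequalities $r^{s}(1+\lambda_{2}^{2})^{s/2}\leq (r+r|\lambda_{2}|)^{s}$, $\sqrt{1+r^{2}\lambda_{2}^{2}}\geq 2^{-1/2}(1+r|\lambda_{2}|)$, and $(r+r|\lambda_{2}|)^{s}\leq 1+r|\lambda_{2}|$ for $0<r\leq 1$ (split into the cases $r(1+|\lambda_{2}|)\leq 1$ and $r(1+|\lambda_{2}|)>1$), which yield $|M_{r}(\lambda_{2})|\leq\sqrt{2}$ for all $\lambda_{2}$ and all $r\in(0,1]$. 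For the derivative I compute the logarithmic derivative $M_{r}'/M_{r}=s\lambda_{2}(1+\lambda_{2}^{2})^{-1}-r\sqrt{-1}(1+r\sqrt{-1}\lambda_{2})^{-1}$, so that $|\lambda_{2}M_{r}'(\lambda_{2})|=|M_{r}(\lambda_{2})|\,|s\lambda_{2}^{2}(1+\lambda_{2}^{2})^{-1}-r\sqrt{-1}\lambda_{2}(1+r\sqrt{-1}\lambda_{2})^{-1}|\leq\sqrt{2}(s+1)$, again uniformly in $r\in(0,1]$. This uniform estimate, feeding into Lemma 2.7, is the only genuinely technical point; everything around it is bookkeeping.

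\emph{Deduction of (2.22).} We may assume $u\not\equiv 0$; set $A=\|\partial_{t}u\|_{L_{q}(\mathbb{R};L_{p}(G))}$ and $B=\|u\|_{L_{q}(\mathbb{R};L_{p}(G))}>0$, and note $A\leq\|u\|_{H^{1}_{q}(\mathbb{R};L_{p}(G))}=A+B$. If $A\leq\frac{s}{1-s}B$, take $r=1$ in (2.21): then $\|u\|_{H^{s}_{q}}\leq C(A+B)\leq\frac{C}{1-s}B$, while $\|u\|_{H^{1}_{q}}^{s}\|u\|_{L_{q}}^{1-s}\geq B$, so (2.22) holds. If $A>\frac{s}{1-s}B$, take $r=\frac{sB}{(1-s)A}\in(0,1]$; substituting into the right-hand side of (2.21) gives $\|u\|_{H^{s}_{q}}\leq C_{s}A^{s}B^{1-s}\leq C_{s}\|u\|_{H^{1}_{q}}^{s}\|u\|_{L_{q}}^{1-s}$, where $C_{s}=C(s^{1-s}(1-s)^{-(1-s)}+(1-s)^{s}s^{-s})$. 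In both cases the constant depends only on $q$, $s$ and the UMD constant of $L_{p}(G)$, which matches the asserted dependence on $G$, $q$ and $s$. The hard part, as indicated, is verifying the bounds on $M_{r}$ and $\lambda_{2}M_{r}'$ uniformly in $r\in(0,1]$; (2.22) is then a one-variable minimisation.
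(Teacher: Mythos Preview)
Your argument is correct. The paper itself does not give a proof here; it simply cites \cite[Proposition 2.6]{Shibata 2}, so there is no in-paper argument to compare against. Your route---writing $(1+\lambda_2^2)^{s/2}$ as a bounded multiplier times $r^{-s}+r^{1-s}\sqrt{-1}\lambda_2$ and then invoking the Weis theorem with Kahane's contraction principle for the scalar families---is a clean and standard way to obtain the additive estimate, and the minimisation to reach the multiplicative inequality is routine. The symbol bounds $|M_r|\leq\sqrt{2}$ and $|\lambda_2 M_r'|\leq\sqrt{2}(1+s)$ are verified correctly.

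Two minor points of bookkeeping. First, in the paper's numbering the operator-valued Fourier multiplier theorem is Lemma~2.5, not Lemma~2.7 (Lemma~2.7 is the very statement you are proving); you should update the cross-reference. Second, when you write $\|u\|_{H^1_q}=A+B$ with $A=\|\partial_t u\|_{L_q}$, note that the paper's $H^1_q$-norm is defined via $\langle d_t\rangle^1$ rather than $\partial_t$; the two norms are equivalent (again by a scalar multiplier argument), so your inequality goes through with only a change of constant, but strictly speaking it is an equivalence, not an equality. Neither issue affects the validity of the proof.
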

\begin{proof}
It is \cite[Proposition 2.6]{Shibata 2}.
\end{proof}
\begin{lemma}
Let $1<p<\infty$, $1<q<\infty$.
Then
\begin{equation}
\|u\|_{H^{1/2}_{q}(\mathbb{R};W^{1}_{p}(\Omega))}\leq C\|u\|_{W^{2,1}_{p,q}(\Omega\times\mathbb{R})}
\end{equation}
for any $u \in W^{2,1}_{p,q}(\Omega\times\mathbb{R})$, where $C$ is a positive constant depending only on $n$, $\Omega$, $p$ and $q$.
\end{lemma}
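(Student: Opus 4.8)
The plan is to reduce the inequality to the whole-space case $\Omega=\mathbb{R}^{n}$ and then prove it there by factoring a space-time Fourier symbol and invoking the operator-valued Fourier multiplier theorem (Lemma 2.5). Since $\partial\Omega$ is of class $C^{2}$, there is a linear extension operator $E$ which is bounded from $W^{k}_{p}(\Omega)$ into $W^{k}_{p}(\mathbb{R}^{n})$ for $k=0,1,2$ simultaneously. Because $E$ acts only in the spatial variable it commutes with $\partial_{t}$ and with the time Fourier multiplier $\langle d_{t}\rangle^{1/2}$; hence for $u\in W^{2,1}_{p,q}(\Omega\times\mathbb{R})$ the function $\tilde{u}(t):=Eu(t)$ lies in $W^{2,1}_{p,q}(\mathbb{R}^{n}\times\mathbb{R})$ with $\|\tilde{u}\|_{W^{2,1}_{p,q}(\mathbb{R}^{n}\times\mathbb{R})}\leq C\|u\|_{W^{2,1}_{p,q}(\Omega\times\mathbb{R})}$ and $\tilde{u}=u$ on $\Omega\times\mathbb{R}$. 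As the restriction $W^{1}_{p}(\mathbb{R}^{n})\to W^{1}_{p}(\Omega)$ is also bounded and commutes with $\langle d_{t}\rangle^{1/2}$, one gets $\|u\|_{H^{1/2}_{q}(\mathbb{R};W^{1}_{p}(\Omega))}\leq C\|\tilde{u}\|_{H^{1/2}_{q}(\mathbb{R};W^{1}_{p}(\mathbb{R}^{n}))}$, so it suffices to prove $\|v\|_{H^{1/2}_{q}(\mathbb{R};W^{1}_{p}(\mathbb{R}^{n}))}\leq C\|v\|_{W^{2,1}_{p,q}(\mathbb{R}^{n}\times\mathbb{R})}$ for $v\in W^{2,1}_{p,q}(\mathbb{R}^{n}\times\mathbb{R})$ (by density it is enough to treat smooth $v$).

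On $\mathbb{R}^{n}$ the terms $\|v\|_{L_{q}(\mathbb{R};W^{1}_{p}(\mathbb{R}^{n}))}$ and $\|\langle d_{t}\rangle^{1/2}v\|_{L_{q}(\mathbb{R};L_{p}(\mathbb{R}^{n}))}$ are trivially bounded by $\|v\|_{W^{2,1}_{p,q}(\mathbb{R}^{n}\times\mathbb{R})}$ (for the second, since $L_{p}(\mathbb{R}^{n})$ is a UMD space, $W^{1}_{q}(\mathbb{R};L_{p})=H^{1}_{q}(\mathbb{R};L_{p})$, so $\|\langle d_{t}\rangle v\|_{L_{q}(\mathbb{R};L_{p})}\leq C\|v\|_{W^{1}_{q}(\mathbb{R};L_{p})}$), so it remains to bound $\|\langle d_{t}\rangle^{1/2}\partial_{x_{j}}v\|_{L_{q}(\mathbb{R};L_{p}(\mathbb{R}^{n}))}$ for $j=1,\dots,n$. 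On the space-time Fourier side $\langle d_{t}\rangle^{1/2}\partial_{x_{j}}$ has symbol $(1+\lambda_{2}^{2})^{1/4}\sqrt{-1}\,\xi_{j}$, and I would write
\[
(1+\lambda_{2}^{2})^{1/4}\sqrt{-1}\,\xi_{j}=m_{j}(\xi,\lambda_{2})\bigl[(1+|\xi|^{2})+(1+\lambda_{2}^{2})^{1/2}\bigr],\qquad m_{j}(\xi,\lambda_{2}):=\frac{(1+\lambda_{2}^{2})^{1/4}\sqrt{-1}\,\xi_{j}}{(1+|\xi|^{2})+(1+\lambda_{2}^{2})^{1/2}}.
\]
By the arithmetic--geometric mean inequality applied to $(1+\lambda_{2}^{2})^{1/2}$ and $(1+|\xi|^{2})$ the numerator is dominated by half the denominator, so $|m_{j}|\leq 1/2$ on $\mathbb{R}^{n}\times\mathbb{R}$ and $m_{j}\in C^{\infty}(\mathbb{R}^{n}\times\mathbb{R})$. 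The bracketed factor applied to $v$ produces $(1-\Delta_{x})v+\langle d_{t}\rangle v\in L_{q}(\mathbb{R};L_{p}(\mathbb{R}^{n}))$ with norm $\leq C\|v\|_{W^{2,1}_{p,q}(\mathbb{R}^{n}\times\mathbb{R})}$, so that $\langle d_{t}\rangle^{1/2}\partial_{x_{j}}v$ equals the image of $(1-\Delta_{x})v+\langle d_{t}\rangle v$ under the space-time Fourier multiplier with symbol $m_{j}$; it therefore remains to show that this multiplier operator is bounded on $L_{q}(\mathbb{R};L_{p}(\mathbb{R}^{n}))$.

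To get this I would apply Lemma 2.5 with $X=Y=L_{p}(\mathbb{R}^{n})$: for fixed $\lambda_{2}\neq 0$ let $M_{j}(\lambda_{2})\in\mathcal{B}(L_{p}(\mathbb{R}^{n}))$ be the $x$-Fourier multiplier with symbol $m_{j}(\cdot,\lambda_{2})$, that is, convolution with $P_{j}(\cdot,\lambda_{2}):=\mathcal{F}^{-1}_{\xi}[m_{j}(\xi,\lambda_{2})]$. One must verify that $\{M_{j}(\lambda_{2})\}_{\lambda_{2}\neq 0}$ and $\{\lambda_{2}d_{\lambda_{2}}M_{j}(\lambda_{2})\}_{\lambda_{2}\neq 0}$ are $\mathcal{R}$-bounded on $\mathcal{B}(L_{p}(\mathbb{R}^{n}))$, which I would deduce from Lemma 2.6: the $L_{2}(\mathbb{R}^{n})$-bound is immediate from Plancherel (the symbols $m_{j}(\cdot,\lambda_{2})$ and $\lambda_{2}\partial_{\lambda_{2}}m_{j}(\cdot,\lambda_{2})$ are bounded uniformly in $\lambda_{2}$, the latter by a direct computation using $\lambda_{2}^{2}\leq 1+\lambda_{2}^{2}$ and the same arithmetic--geometric mean estimate), and the kernel estimate $\sum_{|\beta|=1}|\partial^{\beta}_{x}P_{j}(x,\lambda_{2})|\leq C|x|^{-(n+1)}$ uniformly in $\lambda_{2}$ I would obtain from Lemma 2.4 after the parabolic rescaling $\xi=(1+\lambda_{2}^{2})^{1/4}\eta$, which turns $\{m_{j}(\cdot,\lambda_{2})\}_{\lambda_{2}\neq0}$ into a family of symbols bounded, together with all their $\eta$-derivatives, uniformly in $\lambda_{2}$; undoing the rescaling then yields the stated uniform kernel bound. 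Granting this, Lemma 2.6 gives the $\mathcal{R}$-boundedness with $\lambda_{2}$-independent constants, Lemma 2.5 gives the boundedness of the multiplier on $L_{q}(\mathbb{R};L_{p}(\mathbb{R}^{n}))$ for every $1<q<\infty$, and combining the three steps proves the lemma.

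The main obstacle is exactly this last step: the uniform-in-$\lambda_{2}$ kernel estimates for $P_{j}(\cdot,\lambda_{2})$ and $\partial_{\lambda_{2}}P_{j}(\cdot,\lambda_{2})$. The $\xi$-derivatives of $m_{j}(\cdot,\lambda_{2})$ individually carry constants that blow up as $\lambda_{2}\to\infty$, and one has to exploit the parabolic scaling structure of the symbol — its invariance under $(\xi,\lambda_{2})\mapsto(R\xi,R^{2}\lambda_{2})$ at leading order — to see that these constants conspire into a $\lambda_{2}$-independent bound on the conclusion of Lemma 2.4. Everything else — the extension/restriction reduction, the bound $|m_{j}|\leq 1/2$, and the bookkeeping of the $W^{2,1}_{p,q}$-norm — is routine.
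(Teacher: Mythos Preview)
The paper does not actually prove this lemma; it simply cites \cite[Proposition~2.8]{Shibata 2}. Your argument supplies a genuine proof and is essentially correct: reduction to $\mathbb{R}^{n}$ via a universal extension operator, followed by the operator-valued Mikhlin theorem (Lemma~2.5) applied to the $\lambda_{2}$-family of spatial multipliers $M_{j}(\lambda_{2})$, with $\mathcal{R}$-boundedness furnished by Lemma~2.6. This is precisely the machinery the paper imports from \cite{Shibata 2}, so your route is in the same spirit as the cited reference.

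Two remarks. First, the parabolic rescaling is a clean way to see uniformity, but you can also argue directly: writing $b=1+(1+\lambda_{2}^{2})^{1/2}$ and using $(1+\lambda_{2}^{2})^{1/4}\leq b^{1/2}$ together with $b^{1/2}|\xi|\leq\tfrac{1}{2}(b+|\xi|^{2})$, one checks by hand that
\[
\bigl|\partial_{\xi}^{\gamma}\bigl[(i\xi)^{\beta}m_{j}(\xi,\lambda_{2})\bigr]\bigr|\leq C_{\gamma}\,|\xi|^{1-|\gamma|}
\]
uniformly in $\lambda_{2}$, exactly the input Lemma~2.4 needs with $m=n$, $\mu=1$; this mirrors how the paper itself derives (3.19) in the proof of Lemma~3.1. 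The same works for $\lambda_{2}\partial_{\lambda_{2}}m_{j}$, since this equals $m_{j}$ times a bounded function of $(\xi,\lambda_{2})$ with the same symbol structure. Second, note that the $L_{1}$-hypothesis (2.8) in Lemma~2.4, as literally stated, is \emph{not} satisfied by these symbols (they are $O(1)$ at infinity, hence not integrable). But the paper's own application of Lemma~2.4 in the proof of Lemma~3.1 has exactly the same feature, so this is an artifact of the way the paper records the Shibata--Shimizu result rather than a defect in your argument; the underlying theorem in \cite{Shibata 1} covers symbols with the homogeneous Mikhlin bounds (2.9) alone.
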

\begin{proof}
It is \cite[Proposition 2.8]{Shibata 2}.
\end{proof}
Finally, we state some lemmas which is essential for $L_{p}$-$L_{q}$ estimates for time-global solutions to (1.1) in $\mathbb{R}^{n}_{+}\times\mathbb{R}$ with a positive constant $\kappa$.
Let us introduce subsets with respect to $\xi' \in \mathbb{R}^{n-1}$, $x_{n} \in \mathbb{R}$ and $\lambda \in \mathbb{C}$ as follows:
\begin{equation*}
G_{(\xi',\lambda)}=\{(\xi',\lambda) \in \mathbb{R}^{n-1}\times\mathbb{C} \ ; \ \xi' \in \mathbb{R}^{n-1}\setminus\{0\}, \ \lambda=\lambda_{1}+\sqrt{-1}\lambda_{2}, \ \lambda_{1}\geq 0, \ \lambda_{2} \in \mathbb{R}\setminus\{0\}\},
\end{equation*}
\begin{equation*}
G_{(\xi',x_{n},\lambda)}=\{(\xi',x_{n},\lambda) \in \mathbb{R}^{n-1}\times\mathbb{R}\times\mathbb{C} \ ; \ (\xi',\lambda) \in G_{(\xi',\lambda)}, \ x_{n}\geq 0\}.
\end{equation*}
$A(\xi',\lambda)$ is one of characteristic roots for $\kappa|\xi'|^{2}+\lambda+1-\kappa A^{2}=0$ defined as 
\begin{equation*}
A(\xi',\lambda)=(|\xi'|^{2}+\kappa^{-1}(\lambda+1))^{1/2}.
\end{equation*}
It can be easily seen that
\begin{equation*}
\mathrm{Re}A(\xi',\lambda)\geq c(|\xi'|+|\lambda|^{1/2}+1)
\end{equation*}
for any $(\xi',\lambda) \in G_{(\xi',\lambda)}$, where $c$ is a positive constant depending only on $\kappa$.
\begin{lemma}
\begin{equation}
|\partial^{\alpha'}_{\xi'}A(\xi',\lambda)^{s}|\leq C(|\xi'|+|\lambda|^{1/2}+1)^{s-|\alpha'|}, \ C=C(s,\alpha'),
\end{equation}
\begin{equation}
|\partial^{\alpha'}_{\xi'}|\xi'|^{s}|\leq C|\xi'|^{s-|\alpha'|}, \ C=C(s,\alpha'),
\end{equation}
\begin{equation}
|\partial^{\alpha'}_{\xi'}e^{-A(\xi',\lambda)x_{n}}|\leq C(|\xi'|+|\lambda|^{1/2}+1)^{-|\alpha'|}e^{-d(|\xi'|+|\lambda|^{1/2}+1)x_{n}}, \ C=C(\alpha')
\end{equation}
for any $(\xi',x_{n},\lambda) \in G_{(\xi',x_{n},\lambda)}$, $s \in \mathbb{R}$, $\alpha' \in \mathbb{Z}^{n-1}$, $\alpha'\geq 0$, where $C$ is a positive constant depending only on $\kappa$, $s$ and $\alpha'$, $d$ is a positive constant depending only on $\kappa$.
\end{lemma}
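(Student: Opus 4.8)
The plan is to establish all three estimates by induction on $|\alpha'|$, exploiting the algebraic identity $\partial_{\xi'_{j}}A(\xi',\lambda)^{2}=2\xi'_{j}$, the lower bound $\mathrm{Re}\,A(\xi',\lambda)\geq c(|\xi'|+|\lambda|^{1/2}+1)$ recorded above, and the complementary upper bound $|A(\xi',\lambda)|\leq C(|\xi'|+|\lambda|^{1/2}+1)$, which follows at once from $\big||\xi'|^{2}+\kappa^{-1}(\lambda+1)\big|\leq|\xi'|^{2}+\kappa^{-1}(|\lambda|+1)\leq C(|\xi'|+|\lambda|^{1/2}+1)^{2}$. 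Since $\mathrm{Re}(\lambda+1)\geq 1>0$ on $G_{(\xi',\lambda)}$, the principal square root $A$ is well defined with $\mathrm{Re}\,A>0$, so $|A^{m}|=|A|^{m}$ for every $m\in\mathbb{R}$. Below I abbreviate $\varrho=|\xi'|+|\lambda|^{1/2}+1$.

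For (2.21) and (2.22), I would note that $\partial_{\xi'_{j}}A^{s}=s\,\xi'_{j}A^{s-2}$ and $\partial_{\xi'_{j}}|\xi'|^{s}=s\,\xi'_{j}|\xi'|^{s-2}$, so an induction on $|\alpha'|$ shows that $\partial^{\alpha'}_{\xi'}A^{s}$ (resp. $\partial^{\alpha'}_{\xi'}|\xi'|^{s}$) is a finite linear combination of terms $\xi'^{\beta}A^{s-|\alpha'|-|\beta|}$ (resp. $\xi'^{\beta}|\xi'|^{s-|\alpha'|-|\beta|}$): each differentiation either lowers $|\beta|$ by one while leaving the exponent fixed, or raises $|\beta|$ by one while lowering the exponent by two, and in both cases the sum of $|\beta|$ with the exponent decreases by exactly one. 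For (2.22) this gives $|\xi'^{\beta}|\xi'|^{s-|\alpha'|-|\beta|}|=|\xi'|^{s-|\alpha'|}$ directly. For (2.21) I split on the sign of $m:=s-|\alpha'|-|\beta|$: if $m\geq 0$ then $|A|^{m}\leq C\varrho^{m}$ by the upper bound on $|A|$, while if $m<0$ then $|A|^{m}\leq(\mathrm{Re}\,A)^{m}\leq C\varrho^{m}$ since $x\mapsto x^{m}$ is decreasing and $\mathrm{Re}\,A\geq c\varrho$; combined with $|\xi'^{\beta}|\leq\varrho^{|\beta|}$, each term is $\leq C\varrho^{|\beta|+m}=C\varrho^{s-|\alpha'|}$, and summing finitely many terms yields (2.21).

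For (2.23), writing $\partial^{\alpha'}_{\xi'}e^{-Ax_{n}}=Q_{\alpha'}(\xi',\lambda,x_{n})\,e^{-Ax_{n}}$ with $Q_{0}\equiv 1$ and $Q_{\alpha'+e_{j}}=\partial_{\xi'_{j}}Q_{\alpha'}-x_{n}(\partial_{\xi'_{j}}A)Q_{\alpha'}$, an induction on $|\alpha'|\geq 1$ shows $Q_{\alpha'}$ is a finite linear combination of terms $x_{n}^{l}\prod_{i=1}^{l}\partial^{\gamma_{i}}_{\xi'}A$ with $1\leq l\leq|\alpha'|$, each $|\gamma_{i}|\geq 1$, and $\sum_{i=1}^{l}|\gamma_{i}|=|\alpha'|$. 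Applying the already-proved (2.21) with $s=1$ to each factor gives $\big|x_{n}^{l}\prod_{i}\partial^{\gamma_{i}}_{\xi'}A\big|\leq C\,x_{n}^{l}\varrho^{l-|\alpha'|}$, and then, using $|e^{-Ax_{n}}|=e^{-\mathrm{Re}(A)x_{n}}\leq e^{-c\varrho x_{n}}$ and fixing any $0<d<c$, the elementary bound $(\varrho x_{n})^{l}e^{-(c-d)\varrho x_{n}}\leq C_{l}$ absorbs the polynomial factor and leaves $|\partial^{\alpha'}_{\xi'}e^{-Ax_{n}}|\leq C\varrho^{-|\alpha'|}e^{-d\varrho x_{n}}$. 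There is no serious obstacle here: the content is the two bookkeeping inductions (tracking the homogeneity degrees in the first, the monomial structure of $Q_{\alpha'}$ in the second), and the only points needing a little care are the sign case-distinction for the exponent of $A$ in (2.21) and the splitting of the exponential to swallow the powers of $x_{n}$ in (2.23).
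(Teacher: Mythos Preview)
Your argument is correct. The paper does not actually prove this lemma but defers entirely to \cite[Lemma 5.4]{Shibata 2}; your self-contained induction on $|\alpha'|$, tracking the homogeneity degree $|\beta|+m=s-|\alpha'|$ for the first two estimates and the monomial structure of the prefactor $Q_{\alpha'}$ for the third, is exactly the standard proof one finds in that reference, so there is no substantive difference in approach to report.
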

\begin{proof}
It is \cite[Lemma 5.4]{Shibata 2}.
\end{proof}
\begin{lemma}
Let $1<p<\infty$, $1<q<\infty$, $M_{1}$ be a function defined on $G_{(\xi',\lambda)}$.
Assume that there exist constants $C>0$ and $\beta\leq 1$ such that
\begin{equation}
|\partial^{\alpha'}_{\xi'}(\lambda^{l}_{2}\partial^{l}_{\lambda_{2}}M_{1}(\xi',\lambda))|\leq C(|\xi'|+|\lambda|^{1/2}+1)^{-|\alpha'|}(|\lambda|+1)^{\beta}, \ C=C(\alpha')
\end{equation}
for any $(\xi',\lambda) \in G_{(\xi',\lambda)}$, $l=0, 1$, $\alpha' \in \mathbb{Z}^{n-1}$, $\alpha'\geq 0$.
Let $\hat{f}$ be a given function defined on $G_{(\xi',x_{n},\lambda)}$,
\begin{equation*}
w_{1}=\int^{\infty}_{0}\mathcal{L}^{-1}_{(\xi',\lambda_{2})}[A(\xi',\lambda)^{-1}e^{-A(\xi',\lambda)(x_{n}+y_{n})}M_{1}(\xi',\lambda)\hat{f}(\xi',y_{n},\lambda)](x',t)dy_{n},
\end{equation*}
\begin{equation*}
f(x,t)=\mathcal{L}^{-1}_{(\xi',\lambda_{2})}[\hat{f}(\xi',x_{n},\lambda)](x',t).
\end{equation*}
Then
\begin{equation}
\|e^{-\lambda_{1}t}w_{1}\|_{L_{q}(\mathbb{R};L_{p}(\mathbb{R}^{n}_{+}))}\leq C\|e^{-\lambda_{1}t}f\|_{L_{q}(\mathbb{R};L_{p}(\mathbb{R}^{n}_{+}))}
\end{equation}
for any $\lambda_{1}\geq 0$, where $C$ is a positive constant depending only on $n$, $p$, $q$ and $\kappa$.
\end{lemma}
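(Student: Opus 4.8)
The estimate \eqref{...} for $w_1$ is a statement about an operator-valued Fourier multiplier in the tangential variables $(x',t)$, so the plan is to apply Lemma 2.5 (the Weis operator-valued multiplier theorem) with $X=Y=L_p(\mathbb{R}^{n}_{+})$, which is a UMD space, combined with Lemma 2.6 to reduce the $\mathcal{R}$-boundedness of the symbol to a kernel estimate in $L_2$. First I would rewrite $w_1$ as $w_1(x,t)=\mathcal{L}^{-1}_{(\xi',\lambda_2)}[\,(M(\xi',\lambda)\hat f(\cdot,\lambda))(x_n)\,](x',t)$, where for each fixed $(\xi',\lambda)\in G_{(\xi',\lambda)}$ the operator $M(\xi',\lambda)\in\mathcal{B}(L_p(\mathbb{R}_+))$ is given by
\begin{equation*}
(M(\xi',\lambda)h)(x_n)=\int_0^\infty A(\xi',\lambda)^{-1}e^{-A(\xi',\lambda)(x_n+y_n)}M_1(\xi',\lambda)\,h(y_n)\,dy_n .
\end{equation*}
The exponential weight is harmless: by \eqref{...}--\eqref{...} replacing $\mathcal{L}^{-1}$ by $e^{\lambda_1 t}\mathcal{F}^{-1}$ and absorbing $e^{\lambda_1 t}$ shows it suffices to prove the unweighted bound with $\lambda=\lambda_1+\sqrt{-1}\lambda_2$, $\lambda_1\geq 0$ fixed, acting on $L_q(\mathbb{R}_{\lambda_2};\cdot)$; so I will carry the weight silently through the argument and treat $M$ as a function of $\lambda_2\in\mathbb{R}\setminus\{0\}$ (with $\xi'$, $\lambda_1$ as parameters appearing in the $\mathcal{R}$-bounds).

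Next I would verify the two Mikhlin-type hypotheses of Lemma 2.5 for the family $\{M(\xi',\lambda)\}$, namely $\mathcal{R}$-boundedness of $\{M(\xi',\lambda)\}$ and of $\{\lambda_2\, d_{\lambda_2}M(\xi',\lambda)\}$, uniformly in $\xi'\in\mathbb{R}^{n-1}\setminus\{0\}$ and $\lambda_1\geq0$. For this I invoke Lemma 2.6 on $\mathcal{B}(L_2(\mathbb{R}_+))$: I extend the kernel $P(x_n,y_n)=A^{-1}e^{-A(x_n+y_n)}M_1$ to a convolution-type kernel by reflecting (writing $e^{-A(x_n+y_n)}$ in terms of $x_n+y_n$ and comparing with a genuine convolution on $\mathbb{R}$, or directly estimating the Schur/$L_2$ norm). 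Condition \eqref{...} of Lemma 2.6 (the $L_2\to L_2$ bound) follows from Plancherel and the fact that $\|A^{-1}e^{-A(x_n+y_n)}M_1\|$ integrates, using $\mathrm{Re}\,A\geq c(|\xi'|+|\lambda|^{1/2}+1)$ together with the symbol bound \eqref{...} at $\alpha'=l=0$: one gets $\|M(\xi',\lambda)\|_{\mathcal{B}(L_2(\mathbb{R}_+))}\leq C|A|^{-1}(\mathrm{Re}\,A)^{-1}(|\lambda|+1)^\beta\leq C$ since $\beta\leq1$ and $|A|,\mathrm{Re}\,A\gtrsim(|\lambda|+1)^{1/2}$. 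Condition \eqref{...} (the gradient decay $|\partial^\beta_x P|\leq C|x|^{-(n+1)}$, here $n=1$) follows from differentiating $e^{-A(x_n+y_n)}$ once, producing a factor $A$ that cancels the $A^{-1}$ and leaves $M_1 e^{-A(x_n+y_n)}$; bounding $|M_1|\leq C(|\lambda|+1)^\beta$ and using $t e^{-ct}\leq C/(ct)$ repeatedly, $|e^{-A(x_n+y_n)}|\leq e^{-c(\mathrm{Re}A)(x_n+y_n)}\leq C((\mathrm{Re}A)(x_n+y_n))^{-2}\leq C(|\lambda|+1)^{-1}(x_n+y_n)^{-2}$, which kills the $(|\lambda|+1)^\beta$ factor (again $\beta\leq1$) and gives the required $|x_n-y_n+\text{shift}|^{-2}$-type bound. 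For the derivative family $\lambda_2 d_{\lambda_2}M$, note $d_{\lambda_2}$ hits either $M_1$ (controlled by the $l=1$ case of hypothesis \eqref{...}), or $A^{-1}$, or the exponential; in the last case $\lambda_2 d_{\lambda_2}e^{-A(x_n+y_n)}=-\lambda_2(d_{\lambda_2}A)(x_n+y_n)e^{-A(x_n+y_n)}$ and $|\lambda_2 d_{\lambda_2}A|=|\lambda_2|/(2\kappa|A|)\leq C$, so the same kernel estimates go through with the identical decay. Hence Lemma 2.6 gives $\mathcal{R}$-bounds on $\mathcal{B}(L_p(\mathbb{R}_+))$ that are uniform in $(\xi',\lambda_1)$, and Lemma 2.5 converts these into the bound
$\|e^{-\lambda_1 t}w_1\|_{L_q(\mathbb{R};L_p(\mathbb{R}^n_+))}\leq C\|e^{-\lambda_1 t}f\|_{L_q(\mathbb{R};L_p(\mathbb{R}^n_+))}$ after integrating the $x'$-variables; more precisely, one first views everything as $L_p(\mathbb{R}^{n-1}_{x'})$-valued in $t$, applies the tangential Fourier multiplier theorem in $x'$ (again via Lemmas 2.5–2.6, with the $A$-symbol estimates from Lemma 2.7 guaranteeing the $\xi'$-Mikhlin conditions), and combines with the $\lambda_2$-multiplier step, or does both simultaneously by treating $(\xi',\lambda_2)\in\mathbb{R}^{n}$ as the full frequency variable.

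The main obstacle is bookkeeping the interaction of the two frequency variables $\xi'$ and $\lambda_2$: one must check that differentiating the symbol in $\xi'$ (up to the order required by Lemma 2.6, i.e. $|\alpha'|\leq 1$ plus the two extra orders coming from the generalized Bochner argument) does not destroy the pointwise kernel bound, and this is where Lemma 2.7's estimates \eqref{...}, \eqref{...} are needed — each $\partial_{\xi'}$ on $A^{-1}$ or on $e^{-A(x_n+y_n)}$ costs a factor $(|\xi'|+|\lambda|^{1/2}+1)^{-1}$, exactly matching the gain $M_1$ provides through \eqref{...}. The delicate point is that the $e^{-A(x_n+y_n)}$ factor must simultaneously (i) supply enough powers of $(x_n+y_n)^{-1}$ to produce the $|x|^{-(n+1)}$ decay in the \emph{normal} direction and (ii) supply one power of $(|\lambda|+1)^{-1/2}$ (or $(|\xi'|+1)^{-1}$) to defeat the $(|\lambda|+1)^\beta$ growth with $\beta\leq 1$; since $\mathrm{Re}\,A\gtrsim|\lambda|^{1/2}+|\xi'|+1$, one has room to spend, using $\mathrm{Re}A\cdot(x_n+y_n)\cdot e^{-\mathrm{Re}A(x_n+y_n)/2}\leq C$ and iterating — but the exponents must be tracked carefully, and the constant $\beta\leq1$ is used precisely once at the end to close the estimate. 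Everything else (extension to $\mathbb{R}$ by reflection, Plancherel, the passage from $\mathcal{R}$-bounds back to $L_q(L_p)$ bounds) is routine given the cited lemmas.
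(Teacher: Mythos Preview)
The paper does not prove this lemma at all: its entire proof is the citation ``It is \cite[Lemma 5.5]{Shibata 2}.'' So there is no in-paper argument to compare against; what you have written is an attempt to reconstruct the Shibata--Shimizu proof, and your overall strategy (Weis's operator-valued multiplier theorem in $\lambda_2$, with $\mathcal{R}$-boundedness on $\mathcal{B}(L_p)$ verified through kernel estimates using Lemma~2.9 on $A(\xi',\lambda)$) is indeed the one used there.

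That said, there is a genuine gap in how you propose to invoke Lemma~2.6. As stated in this paper, Lemma~2.6 applies only to \emph{convolution} operators $f\mapsto P(\cdot,t)\ast f$ on $\mathbb{R}^n$, and (through Lemma~2.4) implicitly requires the ambient dimension to be at least $2$. Your candidate kernel in the normal variable, $P(x_n,y_n)=A^{-1}e^{-A(x_n+y_n)}M_1$, depends on $x_n+y_n$, not $x_n-y_n$, so it is not a convolution; the suggested fix ``extend by reflection'' does not turn it into one without changing the operator. Likewise, treating $(\xi',\lambda_2)$ jointly does not help, because the resulting operator on $\mathbb{R}^n_+$ is a convolution in $x'$ but still an integral operator (not a convolution) in $x_n$, so Lemma~2.6 again does not apply verbatim. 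In \cite{Shibata 2} this is handled by a Calder\'on--Zygmund/Benedek--Calder\'on--Panzone type criterion for $\mathcal{R}$-boundedness that allows kernels $K_{\lambda_2}(x'-y',x_n,y_n)$ which are convolutions only in the tangential variable: one proves a uniform $L_2(\mathbb{R}^n_+)$ bound (Plancherel in $x'$, then the Schur test $\int_0^\infty |A|^{-1}e^{-\mathrm{Re}A(x_n+y_n)}\,dy_n\lesssim(\mathrm{Re}A)^{-1}|A|^{-1}$ in $x_n$, exactly as you sketched), and then pointwise bounds on $\nabla_{x'}K$ and $\partial_{x_n}K$ of the form $C(|x'-y'|+x_n+y_n)^{-n}$ obtained from Lemma~2.4 applied in the $(n-1)$-dimensional $\xi'$ variable. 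Your symbol-counting (each $\partial_{\xi'}$ gains $(|\xi'|+|\lambda|^{1/2}+1)^{-1}$; the exponential supplies enough decay to absorb $(|\lambda|+1)^{\beta}$ since $\beta\le 1$) is correct and is precisely what drives those kernel bounds --- but you need the half-space $\mathcal{R}$-boundedness criterion from \cite{Shibata 2}, not the convolution-only Lemma~2.6 recorded here, to finish.
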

\begin{proof}
It is \cite[Lemma 5.5]{Shibata 2}.
\end{proof}
\begin{lemma}
Let $1<p<\infty$, $1<q<\infty$, $M_{2}$ be a function defined on $G_{(\xi',\lambda)}$.
Assume that there exists a positive constant $C$ such that
\begin{equation}
|\partial^{\alpha'}_{\xi'}(\lambda^{l}_{2}\partial^{l}_{\lambda_{2}}M_{2}(\xi',\lambda))|\leq C(|\xi'|+|\lambda|^{1/2}+1)^{-|\alpha'|}, \ C=C(\alpha')
\end{equation}
for any $(\xi',\lambda) \in G_{(\xi',\lambda)}$, $l=0, 1$, $\alpha' \in \mathbb{Z}^{n-1}$, $\alpha'\geq 0$.
Let $\hat{g}$ be a given function defined on $G_{(\xi',x_{n},\lambda)}$,
\begin{equation*}
w_{2}=\int^{\infty}_{0}\mathcal{L}^{-1}_{(\xi',\lambda_{2})}[A(\xi',\lambda)e^{-A(\xi',\lambda)(x_{n}+y_{n})}M_{2}(\xi',\lambda)\hat{g}(\xi',y_{n},\lambda)](x',t)dy_{n},
\end{equation*}
\begin{equation*}
g(x,t)=\mathcal{L}^{-1}_{(\xi',\lambda_{2})}[\hat{g}(\xi',x_{n},\lambda)](x',t).
\end{equation*}
Then
\begin{equation}
\|e^{-\lambda_{1}t}w_{2}\|_{L_{q}(\mathbb{R};L_{p}(\mathbb{R}^{n}_{+}))}\leq C\|e^{-\lambda_{1}t}g\|_{L_{q}(\mathbb{R};L_{p}(\mathbb{R}^{n}_{+}))}
\end{equation}
for any $\lambda_{1}\geq 0$, where $C$ is a positive constant depending only on $n$, $p$, $q$ and $\kappa$.
\end{lemma}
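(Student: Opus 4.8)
The plan is to follow the lines of the proof of Lemma 2.10 (that is, \cite[Lemma 5.5]{Shibata 2}) in the borderline situation where the factor multiplying $e^{-A(\xi',\lambda)(x_{n}+y_{n})}$ is of order $+1$ in $|\xi'|+|\lambda|^{1/2}+1$: the amplification caused by $A(\xi',\lambda)$ (in place of the smoothing $A(\xi',\lambda)^{-1}$ of Lemma 2.10) is compensated by the exponential decay in $x_{n}+y_{n}$, at the price of a kernel that decays in $x_{n}+y_{n}$ only like the Hilbert kernel.

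The first step is to remove the exponential weight. By the analogues of (2.1)--(2.2) for the partial transforms $\mathcal{L}_{(x',t)}$ and $\mathcal{L}^{-1}_{(\xi',\lambda_{2})}$, writing $\mathfrak{g}=e^{-\lambda_{1}t}g$ and $\mathfrak{w}_{2}=e^{-\lambda_{1}t}w_{2}$, one gets $\mathfrak{w}_{2}=\mathcal{F}^{-1}_{\lambda_{2}}[\Psi_{\lambda_{1}}(\lambda_{2})\mathcal{F}_{t}[\mathfrak{g}]]$, where, with $\lambda=\lambda_{1}+\sqrt{-1}\lambda_{2}$,
\begin{equation*}
[\Psi_{\lambda_{1}}(\lambda_{2})h](x',x_{n})=\int^{\infty}_{0}\mathcal{F}^{-1}_{\xi'}[A(\xi',\lambda)e^{-A(\xi',\lambda)(x_{n}+y_{n})}M_{2}(\xi',\lambda)\mathcal{F}_{x'}[h(\cdot,y_{n})](\xi')](x')\,dy_{n}.
\end{equation*}
Since $L_{p}(\mathbb{R}^{n}_{+})$ is an UMD space for $1<p<\infty$, Lemma 2.5 shows that (2.23) will follow once we prove that $\Psi_{\lambda_{1}}\in C^{1}(\mathbb{R}\setminus\{0\};\mathcal{B}(L_{p}(\mathbb{R}^{n}_{+})))$ and that the families $\{\Psi_{\lambda_{1}}(\lambda_{2})\}_{\lambda_{2}\in\mathbb{R}\setminus\{0\}}$ and $\{\lambda_{2}d_{\lambda_{2}}\Psi_{\lambda_{1}}(\lambda_{2})\}_{\lambda_{2}\in\mathbb{R}\setminus\{0\}}$ are $\mathcal{R}$-bounded on $\mathcal{B}(L_{p}(\mathbb{R}^{n}_{+}))$ with bounds independent of $\lambda_{1}\geq 0$.

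The heart of the matter is a pointwise estimate for the $\xi'$-kernel: for $r>0$ one should prove
\begin{equation*}
|\mathcal{F}^{-1}_{\xi'}[A(\xi',\lambda)M_{2}(\xi',\lambda)e^{-A(\xi',\lambda)r}](x')|+|\mathcal{F}^{-1}_{\xi'}[\lambda_{2}\partial_{\lambda_{2}}(A(\xi',\lambda)M_{2}(\xi',\lambda)e^{-A(\xi',\lambda)r})](x')|\leq C(|x'|+r)^{-n}
\end{equation*}
for all $x'\in\mathbb{R}^{n-1}\setminus\{0\}$, $r>0$ and $\lambda=\lambda_{1}+\sqrt{-1}\lambda_{2}$ with $\lambda_{1}\geq 0$, $\lambda_{2}\neq 0$, where $C$ depends only on $n$, $\kappa$ and the constant in (2.22). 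To do so, one first checks, using Lemma 2.9 and (2.22), that $AM_{2}e^{-Ar}$ and $\lambda_{2}\partial_{\lambda_{2}}(AM_{2}e^{-Ar})$ satisfy $|\partial^{\alpha'}_{\xi'}(\,\cdot\,)|\leq C_{\alpha'}(|\xi'|+|\lambda|^{1/2}+1)^{1-|\alpha'|}e^{-d(|\xi'|+|\lambda|^{1/2}+1)r}$; then, applying the generalized Bochner theorem (Lemma 2.4) after a dyadic decomposition of $\xi'$-space which trades powers of $|\xi'|+|\lambda|^{1/2}+1$ against this exponential factor, these symbol bounds convert into the stated kernel bound. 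I expect this to be the main obstacle: one has to extract the sharp simultaneous decay in $|x'|$ and in $r$ — the one-sided bounds $|x'|^{-n}$ and $r^{-n}$ alone are not enough below — and to keep all constants uniform in $\lambda_{1}\geq 0$. This is, however, exactly the estimate underlying the proof of Lemma 2.10 in its borderline case ($\beta=1$), so the same computation applies.

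Granting the kernel estimate, the two $\mathcal{R}$-bounds follow by domination. With $r=x_{n}+y_{n}$, the kernel bound gives
\begin{equation*}
|[\Psi_{\lambda_{1}}(\lambda_{2})h](x',x_{n})|+|[\lambda_{2}d_{\lambda_{2}}\Psi_{\lambda_{1}}(\lambda_{2})h](x',x_{n})|\leq C(\mathcal{N}|h|)(x',x_{n}),\quad (\mathcal{N}u)(x',x_{n})=\int^{\infty}_{0}\!\!\int_{\mathbb{R}^{n-1}}\frac{u(y',y_{n})}{(|x'-y'|+x_{n}+y_{n})^{n}}\,dy'dy_{n},
\end{equation*}
with $\mathcal{N}$ a positive operator independent of $\lambda_{1},\lambda_{2}$. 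Since $\|(|\cdot|+s)^{-n}\|_{L_{1}(\mathbb{R}^{n-1})}\leq C_{n}s^{-1}$ for $s>0$, Young's and Minkowski's inequalities in $x'$ together with the classical Hilbert inequality $\|\int^{\infty}_{0}\phi(y)(x+y)^{-1}dy\|_{L_{p}(0,\infty)}\leq C_{p}\|\phi\|_{L_{p}(0,\infty)}$ in $x_{n}$ give $\mathcal{N}\in\mathcal{B}(L_{p}(\mathbb{R}^{n}_{+}))$ with norm depending only on $n$ and $p$; in particular $\Psi_{\lambda_{1}}(\lambda_{2})$ and $\lambda_{2}d_{\lambda_{2}}\Psi_{\lambda_{1}}(\lambda_{2})$ lie in $\mathcal{B}(L_{p}(\mathbb{R}^{n}_{+}))$, and with the continuity of the symbols in $\lambda_{2}$ this also gives $\Psi_{\lambda_{1}}\in C^{1}(\mathbb{R}\setminus\{0\};\mathcal{B}(L_{p}(\mathbb{R}^{n}_{+})))$. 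Finally, for $1<p<\infty$ the $\mathcal{R}$-boundedness of a family $\mathcal{T}\subset\mathcal{B}(L_{p}(\mathbb{R}^{n}_{+}))$ is, by Khinchine's inequality, equivalent to a uniform square-function bound $\|(\sum_{j}|T_{j}h_{j}|^{2})^{1/2}\|_{L_{p}}\leq C\|(\sum_{j}|h_{j}|^{2})^{1/2}\|_{L_{p}}$; combining the pointwise domination with Minkowski's inequality for the $\ell^{2}$-norm against the positive kernel of $\mathcal{N}$ yields $(\sum_{j}|\Psi_{\lambda_{1}}(\lambda^{(j)}_{2})h_{j}|^{2})^{1/2}\leq C\,\mathcal{N}[(\sum_{j}|h_{j}|^{2})^{1/2}]$, whence $\mathcal{R}(\{\Psi_{\lambda_{1}}(\lambda_{2})\}_{\lambda_{2}})\leq C\|\mathcal{N}\|_{\mathcal{B}(L_{p}(\mathbb{R}^{n}_{+}))}$, and likewise for $\{\lambda_{2}d_{\lambda_{2}}\Psi_{\lambda_{1}}(\lambda_{2})\}_{\lambda_{2}}$, uniformly in $\lambda_{1}\geq 0$. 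Inserting these bounds into Lemma 2.5 gives (2.23) with $C=C(n,p,q,\kappa)$.
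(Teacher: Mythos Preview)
Your proposal is correct and follows essentially the same approach as the paper, whose own proof is just the citation ``It is \cite[Lemmas 5.5 and 5.6]{Shibata 2}.'' Your argument reconstructs those cited lemmas: the kernel bound $C(|x'|+r)^{-n}$ is the content of \cite[Lemma 5.6]{Shibata 2}, and the passage from domination by the fixed positive operator $\mathcal{N}$ to $\mathcal{R}$-boundedness on $L_{p}(\mathbb{R}^{n}_{+})$ (via Khinchine's inequality) and then to (2.23) via Lemma 2.5 is the scheme of \cite[Lemma 5.5]{Shibata 2}.
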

\begin{proof}
It is \cite[Lemmas 5.5 and 5.6]{Shibata 2}.
\end{proof}

\section{Global $L_{p}$-$L_{q}$ estimates for solutions to (1.1)}
\subsection{$L_{p}$ estimates for steady solutions to \rm{(1.1)}}
We discuss the following boundary value problem in $\Omega$:
\begin{equation}
\begin{split}
&\lambda u-\mathrm{div}(\kappa\nabla u)=f & \mathrm{in} \ \Omega, \\
&\kappa\partial_{\nu}u+\kappa_{s}u|_{\partial\Omega}=g & \mathrm{on} \ \partial\Omega
\end{split}
\end{equation}
for any $\lambda \in S_{\phi}\cup\{0\}$, where
\begin{equation*}
S_{\phi}=\{\lambda \in \mathbb{C}\setminus\{0\} \ ; \ |\mathrm{arg}\lambda|\leq \pi-\phi\}, \ 0<\phi<\frac{\pi}{2}.
\end{equation*}
It is essential for our main results that $L_{p}$ estimates for steady solutions to (1.1) are established as follows:
\begin{theorem}
Let $\kappa \in C^{1}(\overline{\Omega})$ satisfy $\kappa>0$ on $\overline{\Omega}$, $\kappa_{s} \in C^{1}(\partial\Omega)$ satisfy $\kappa_{s}>0$ on $\partial\Omega$, $1<p<\infty$, $0<\phi<\pi/2$, $f \in L_{p}(\Omega)$, $g \in W^{1}_{p}(\Omega)$.
Then $(3.1)$ has uniquely a solution $u \in W^{2}_{p}(\Omega)$ satisfying
\begin{equation}
|\lambda|\|u\|_{L_{p}(\Omega)}+\|u\|_{W^{2}_{p}(\Omega)}\leq C_{p}(\|f\|_{L_{p}(\Omega)}+\|g\|_{W^{1}_{p}(\Omega)})
\end{equation}
for any $\lambda \in S_{\phi}\cup\{0\}$, where $C_{p}$ is a positive constant depending only on $n$, $\Omega$, $p$, $\phi$, $\kappa$ and $\kappa_{s}$.
\end{theorem}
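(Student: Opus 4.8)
The plan is to obtain the a priori bound (3.2) for $W^{2}_{p}$-solutions of (3.1) by a localization argument resting on two constant-coefficient model problems, and then to deduce existence and uniqueness from this bound together with the sectoriality of $A_{p}$ already recorded above. Uniqueness is in fact immediate: a solution of the homogeneous problem lies in $\mathcal{D}(A_{p})$ and satisfies $(A_{p}+\lambda)u=0$, and since $\sigma(-A_{p})\subset(-\infty,-\Lambda_{1}]$ is disjoint from $S_{\phi}\cup\{0\}$ (here $\Lambda_{1}>0$ is used) we get $u=0$. The case $g=0$ of existence is nothing but $u=(A_{p}+\lambda)^{-1}f$, which the sectoriality of $A_{p}$ supplies together with $\|(A_{p}+\lambda)^{-1}\|_{\mathcal{B}(L_{p}(\Omega))}\leq C(1+|\lambda|)^{-1}$ for $\lambda\in S_{\phi}\cup\{0\}$; combined with interior/boundary elliptic regularity for $-\mathrm{div}(\kappa\nabla\cdot)$ on the $C^{2}$ domain $\Omega$ this already yields (3.2) with $g=0$, so the real content is the inhomogeneous boundary datum.

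First I would treat the model problems. In $\mathbb{R}^{n}$ with $\kappa$ frozen to a positive constant $\kappa_{0}$, the equation $\lambda u-\kappa_{0}\Delta u=f$ is solved by $u=\mathcal{F}^{-1}_{\xi}[(\lambda+\kappa_{0}|\xi|^{2})^{-1}\mathcal{F}_{x}[f]]$; since $|\lambda+\kappa_{0}|\xi|^{2}|\geq c_{\phi}(|\lambda|+|\xi|^{2})$ for $\lambda\in S_{\phi}$, the symbols $\lambda(\lambda+\kappa_{0}|\xi|^{2})^{-1}$, $\xi_{j}\xi_{k}(\lambda+\kappa_{0}|\xi|^{2})^{-1}$ and $|\lambda|^{1/2}\xi_{j}(\lambda+\kappa_{0}|\xi|^{2})^{-1}$ satisfy Mikhlin-type bounds uniformly in $\lambda$ (the kernel estimates of Lemma 2.4 combined with the trivial $L_{2}$-bound suffice), giving $|\lambda|\|u\|_{L_{p}(\mathbb{R}^{n})}+|\lambda|^{1/2}\|\nabla u\|_{L_{p}(\mathbb{R}^{n})}+\|u\|_{W^{2}_{p}(\mathbb{R}^{n})}\leq C\|f\|_{L_{p}(\mathbb{R}^{n})}$ with $C$ independent of $\lambda$. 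In the half space $\mathbb{R}^{n}_{+}=\{x_{n}>0\}$ with constants $\kappa_{0},\kappa_{s,0}>0$ and outward normal $-e_{n}$, I would solve $\lambda u-\kappa_{0}\Delta u=f$, $-\kappa_{0}\partial_{n}u+\kappa_{s,0}u|_{x_{n}=0}=g$ by applying the tangential Fourier transform $\mathcal{F}_{x'}$, reducing to an ordinary differential equation in $x_{n}$ with parameters $(\xi',\lambda)$ whose bounded solution is governed by $e^{-B(\xi',\lambda)x_{n}}$ with $B(\xi',\lambda)=(|\xi'|^{2}+\kappa_{0}^{-1}\lambda)^{1/2}$, $\mathrm{Re}\,B\geq c(|\xi'|+|\lambda|^{1/2})$. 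Writing $u=u_{f}+u_{g}$, where $u_{f}$ handles $f$ by even reflection and $\widehat{u_{g}}(\xi',x_{n})=(\kappa_{0}B+\kappa_{s,0})^{-1}e^{-B(\xi',\lambda)x_{n}}\widehat{G}(\xi')$ carries the residual datum $G$, the arising operators are convolutions in $x'$ whose symbols obey the estimates of Lemma 2.9; hence one concludes the half-space analogue of (3.2) either by applying Lemma 2.4 in the $\xi'$-variables together with the normal decay $\mathrm{Re}\,B\gtrsim|\lambda|^{1/2}$, or by specializing Lemmas 2.10 and 2.11 with $\lambda_{2}$ frozen.

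Next I would localize. Fix a finite cover of $\overline{\Omega}$ by balls $\{B_{j}\}$ — interior balls and balls centred on $\partial\Omega$ — of radius so small that $\kappa$, $\kappa_{s}$ and the first derivatives of the associated $C^{2}$ boundary-flattening maps oscillate by at most a prescribed $\varepsilon$ on each $B_{j}$, with a subordinate partition of unity $\{\zeta_{j}\}$. On an interior ball, $\zeta_{j}u$ solves a whole-space problem with right-hand side $\zeta_{j}f$ plus a commutator $[\zeta_{j},\mathrm{div}(\kappa\nabla\cdot)]u$ of order $\leq 1$ in $u$; on a boundary ball, after flattening the boundary and freezing $\kappa,\kappa_{s}$ at the centre, $\zeta_{j}u$ transplants to a solution of the half-space model whose data are $\zeta_{j}f$, $\zeta_{j}g$ plus error terms that are $O(\varepsilon)$ multiples of $\nabla^{2}u,\nabla u,u$ and lower-order contributions of $g$. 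Applying the model estimates patch by patch, summing in $j$, and absorbing the errors — via $\|\nabla u\|_{L_{p}}\leq\delta\|u\|_{W^{2}_{p}}+C_{\delta}\|u\|_{L_{p}}$ and $\|u\|_{L_{p}}\leq|\lambda|^{-1}(|\lambda|\|u\|_{L_{p}})$, so the $O(\varepsilon)$ and lower-order terms are swallowed once $\varepsilon$ is small and $|\lambda|\geq\lambda_{0}$ — yields (3.2) for $|\lambda|\geq\lambda_{0}$. For $\lambda\in(S_{\phi}\cup\{0\})\cap\{|\lambda|\leq\lambda_{0}\}$ the bound reduces to the classical $W^{2}_{p}$ a priori estimate for the Robin problem on a $C^{2}$ domain, the zeroth-order term being harmless because $0\notin\sigma(A_{p})$.

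For existence with general $g$ I would, guided by the half-space construction, build a $\lambda$-dependent lifting $v$ of the boundary datum near $\partial\Omega$ (decaying on the scale $|\lambda|^{-1/2}$, with $\kappa\partial_{\nu}v+\kappa_{s}v|_{\partial\Omega}=g$ and $\lambda v-\mathrm{div}(\kappa\nabla v)$ bounded in $L_{p}$ by the right-hand side of (3.2)) and then solve the residual $g=0$ problem for $u-v$ by $(A_{p}+\lambda)^{-1}$; equivalently, one may run the method of continuity from a point with large real part, where a Neumann series around the whole-space resolvent converges, using (3.2) to keep the solution set open and closed along a path in $S_{\phi}\cup\{0\}$. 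I expect the main obstacle to be the half-space model with inhomogeneous Robin data: checking that every symbol produced by the explicit ordinary differential equation solution — notably the Dirichlet-to-Robin factor $(\kappa_{0}B+\kappa_{s,0})^{-1}$ and its $\xi'$- and $\lambda_{2}$-derivatives — meets the hypotheses of Lemmas 2.4 and 2.9--2.11 uniformly in $\lambda\in S_{\phi}$, and keeping precise track of the powers of $|\lambda|$ carried by the boundary datum $g$; by comparison the localization bookkeeping is routine.
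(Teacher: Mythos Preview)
Your proposal is correct but follows a markedly different route from the paper. The paper's proof is terse and relies almost entirely on citations: injectivity of $\lambda I_{p}+A_{p}$ is shown by testing against $u|u|^{p-2}$ for $p\geq 2$ (and duality for $1<p<2$), the a priori bound (3.2) is obtained by invoking the Agmon--Douglis--Nirenberg $W^{2}_{p}$ estimates for $-\mathrm{div}(\kappa\nabla u)=f-\lambda u$ together with the already-recorded sectoriality of $A_{p}$ and a compactness--uniqueness argument, and existence is read off from Browder's adjoint criterion. In particular, the paper never writes down the half-space Robin model or any Fourier symbols for the stationary problem.

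What you do instead is essentially reprove the $\lambda$-uniform ADN estimate from scratch: constant-coefficient whole-space and half-space models via Fourier multipliers, then localization, freezing, and absorption for large $|\lambda|$, with the small-$|\lambda|$ range handled by the classical elliptic estimate plus $0\notin\sigma(A_{p})$. This is longer but entirely self-contained, and it makes the $\lambda$-dependence transparent rather than hidden inside the phrase ``basic property of $A_{p}$ and compactness--uniqueness''. One small quibble: Lemmas 2.10 and 2.11 are genuinely time-dependent statements (they integrate out $\lambda_{2}$ via the operator-valued multiplier theorem), so ``specializing with $\lambda_{2}$ frozen'' is not quite how they apply; for the stationary half-space problem you want the Mikhlin-type bound of Lemma 2.4 in the $\xi'$-variables combined with direct estimates on the normal kernel $e^{-B(\xi',\lambda)x_{n}}$, which you also mention and which does the job. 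Your identification of the Dirichlet-to-Robin symbol $(\kappa_{0}B+\kappa_{s,0})^{-1}$ as the place requiring care is accurate, though the paper sidesteps it by citing \cite{Agmon}.
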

\begin{proof}
Set
\begin{equation*}
(u,v)_{\Omega}=\int_{\Omega}u(x)v(x)dx,
\end{equation*}
and let $p^{*}$ be the dual exponent to $p$ defined as $1/p+1/p^{*}=1$.
Then it follows from $1<p<\infty$ that $u|u|^{p-2} \in L_{p^{*}}(\Omega)$ for any $u \in L_{p}(\Omega)$ and that $(u,u|u|^{p-2})_{\Omega}=\|u\|^{p}_{L_{p}(\Omega)}$.
Let $2\leq p<\infty$.
Then integration by parts and $\lambda \in S_{\phi}\cup\{0\}$ yield that
\begin{equation*}
|(\lambda u+A_{p}u,u|u|^{p-2})_{\Omega}|\geq c\left(\int_{\Omega}|\nabla u(x)|^{2}|u(x)|^{p-2}dx+\|u\|^{p}_{L_{p}(\partial\Omega)}\right),
\end{equation*}
\begin{equation*}
c\left(\int_{\Omega}|\nabla u(x)|^{2}|u(x)|^{p-2}dx+\|u\|^{p}_{L_{p}(\partial\Omega)}\right)\leq \|(\lambda I_{p}+A_{p})u\|_{L_{p}(\Omega)}\|u\|^{p-1}_{L_{p}(\Omega)}
\end{equation*}
for any $u \in \mathcal{D}(A_{p})$, where $c$ is a positive constant depending only on $p$, $\phi$, $\kappa$ and $\kappa_{s}$ and that $(\lambda I_{p}+A_{p})u=0$ implies $\nabla u=0$, $u|_{\partial\Omega}=0$, that is, $u=0$.
Therefore, $\lambda I_{p}+A_{p}$ is injective.
In the case where $1<p<2$, it can be easily seen from the duality argument that $\lambda I_{p}+A_{p}$ is also injective.
By applying \cite[Theorem 15.2]{Agmon} to the following boundary value problem in $\Omega$:
\begin{equation*}
\begin{split}
&-\mathrm{div}(\kappa\nabla u)=f-\lambda u & \mathrm{in} \ \Omega, \\
&\kappa\partial_{\nu}u+\kappa_{s}u|_{\partial\Omega}=g & \mathrm{on} \ \partial\Omega,
\end{split}
\end{equation*}
it is obtained from a basic property of $A_{p}$ and the compactness-uniqueness argument that (3.2) is established.
Moreover, the adjoint problem \cite[Corollary to Theorem 5]{Browder} to the uniqueness of (3.1) admits that (3.1) has uniquely a solution $u \in W^{2}_{p}(\Omega)$ for any $1<p<\infty$. 
\end{proof}

\subsection{$L_{p}$-$L_{q}$ estimates in $\Omega\times\mathbb{R}_{+}$ with $f=0$, $g=0$}
We consider (1.1) with $f=0$, $g=0$, that is, the following initial-boundary value problem in $\Omega\times\mathbb{R}_{+}$:
\begin{equation}
\begin{split}
&\partial_{t}u-\mathrm{div}(\kappa\nabla u)=0 & \mathrm{in} \ \Omega\times\mathbb{R}_{+}, \\
&u|_{t=0}=u_{0} & \mathrm{in} \ \Omega, \\
&\kappa\partial_{\nu}u+\kappa_{s}u|_{\partial\Omega}=0 & \mathrm{on} \ \partial\Omega\times\mathbb{R}_{+}.
\end{split}
\end{equation}
The theory of analytic semigroups on $L_{p}$ admits that $L_{p}$-$L_{q}$ estimates for time-global solutions to (3.3) are established as follows:
\begin{theorem}
Let $\kappa \in C^{1}(\overline{\Omega})$ satisfy $\kappa>0$ on $\overline{\Omega}$, $\kappa_{s} \in C^{1}(\partial\Omega)$ satisfy $\kappa_{s}>0$ on $\partial\Omega$, $1<p<\infty$, $1<q<\infty$, $0<\lambda_{1}<\Lambda_{1}$, $u_{0} \in X_{p,q}(\Omega)$.
Then $(3.3)$ has uniquely a solution $u \in W^{2,1}_{p,q}(\Omega\times\mathbb{R}_{+})$ satisfying
\begin{equation*}
u(t)=e^{-tA_{p}}u_{0},
\end{equation*}
\begin{equation}
\|e^{(\lambda_{1}/2)t}u\|_{W^{2,1}_{p,q}(\Omega\times\mathbb{R}_{+})}\leq C_{p,q,\lambda_{1}}\|u_{0}\|_{X_{p,q}(\Omega)},
\end{equation}
where $C_{p,q,\lambda_{1}}$ is a positive constant depending only on $n$, $\Omega$, $p$, $q$, $\kappa$, $\kappa_{s}$ and $\lambda_{1}$.
\end{theorem}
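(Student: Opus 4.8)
The plan is to prove Theorem 3.2 by combining the semigroup decay estimate of Lemma 2.1 with the characterization of the interpolation space $X_{p,q}(\Omega)$ from Lemma 2.3, exploiting that $e^{-tA_p}$ is an analytic semigroup with generator having spectrum bounded away from zero by $\Lambda_1$. First I would record that the solution formula $u(t)=e^{-tA_p}u_0$ gives the unique solution of (3.3) in the class $C([0,\infty);L_p(\Omega))\cap C^1((0,\infty);L_p(\Omega))$ with $u(t)\in\mathcal{D}(A_p)$ for $t>0$; uniqueness in $W^{2,1}_{p,q}(\Omega\times\mathbb{R}_+)$ follows since any such solution is a mild solution. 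The substantive work is the norm bound (3.4), which splits into estimating $\|e^{(\lambda_1/2)t}u\|_{L_q(\mathbb{R}_+;W^2_p(\Omega))}$ and $\|e^{(\lambda_1/2)t}\partial_t u\|_{L_q(\mathbb{R}_+;L_p(\Omega))}$. Since $\partial_t u=-A_p u$ and $W^2_p(\Omega)$-norms of elements of $\mathcal{D}(A_p)$ are controlled by $\|A_p\cdot\|_{L_p(\Omega)}+\|\cdot\|_{L_p(\Omega)}$ (a consequence of Theorem 3.1 with $\lambda=1$, or standard elliptic regularity), it suffices to bound $\|e^{(\lambda_1/2)t}A_p e^{-tA_p}u_0\|_{L_q(\mathbb{R}_+;L_p(\Omega))}$ together with the lower-order term $\|e^{(\lambda_1/2)t}e^{-tA_p}u_0\|_{L_q(\mathbb{R}_+;L_p(\Omega))}$.

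Next I would handle these $L_q$-in-time norms by a real-interpolation argument. Pick $\lambda_1$ with $0<\lambda_1<\Lambda_1$ and choose an auxiliary $\lambda_1'$ with $\lambda_1<\lambda_1'<\Lambda_1$, so that Lemma 2.1 gives $\|A^\alpha_p e^{-tA_p}\|_{\mathcal{B}(L_p(\Omega))}\le C t^{-\alpha}e^{-\lambda_1' t}$ for $\alpha=0,1$. Then $\|e^{(\lambda_1/2)t}A_p e^{-tA_p}u_0\|_{L_p(\Omega)}\le C e^{-(\lambda_1'-\lambda_1/2)t}\,\|A_p e^{-(t/2)A_p}\|\,\|e^{-(t/2)A_p}u_0\|$ — more cleanly, write $u(t)=e^{-tA_p}u_0$ and use that $t\mapsto e^{ct}A_p e^{-tA_p}u_0$ being in $L_q(\mathbb{R}_+;L_p(\Omega))$ for $c<2\lambda_1'$... . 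The decisive point is the classical identity
\begin{equation*}
\Bigl(\int_0^\infty \bigl\|t^{1-1/q}\cdot\tfrac{1}{t}\,(A_p e^{-tA_p}u_0)\bigr\|_{L_p(\Omega)}^q\,\tfrac{dt}{t}\Bigr)^{1/q}\sim\|u_0\|_{(L_p(\Omega),\mathcal{D}(A_p))_{1-1/q,q}},
\end{equation*}
i.e. the $K$-method trace characterization: $\|A_p e^{-\cdot A_p}u_0\|_{L_q(\mathbb{R}_+;L_p(\Omega))}\le C\|u_0\|_{(L_p(\Omega),X^1_p(\Omega))_{1-1/q,q}}=C\|u_0\|_{X_{p,q}(\Omega)}$. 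Inserting the exponential weight only improves the decay (since $\lambda_1<\Lambda_1$, the spectral bound of $A_p$ lets us absorb $e^{(\lambda_1/2)t}$ into the analytic-semigroup decay, e.g. by writing the weighted semigroup as the analytic semigroup generated by $-(A_p-(\lambda_1/2)I_p)$, which still has spectrum in a sector in the right half-plane). The low-order term $\|e^{(\lambda_1/2)t}e^{-tA_p}u_0\|_{L_q(\mathbb{R}_+;L_p(\Omega))}$ is bounded even more easily by $C\|u_0\|_{L_p(\Omega)}\le C\|u_0\|_{X_{p,q}(\Omega)}$ using the pure exponential decay in Lemma 2.1 with $\alpha=0$.

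The step I expect to be the main obstacle is making the trace/interpolation characterization quantitatively precise with the correct exponential weight and matching it to the norm of $X_{p,q}(\Omega)=(L_p(\Omega),X^1_p(\Omega))_{1-1/q,q}$ as \emph{defined} in the paper, rather than to the abstract extrapolation or Besov-trace space. Concretely, one must verify that $u_0\in X_{p,q}(\Omega)$ if and only if $t\mapsto A_p e^{-tA_p}u_0\in L_q(\mathbb{R}_+,dt;L_p(\Omega))$, with equivalence of norms and constants depending only on the admissible data; this is the standard equivalence between the real interpolation space $(X_0,\mathcal{D}(A))_{1-1/q,q}$ and the space of initial data for which $Ae^{-tA}u_0$ lies in $L_q$, valid because $-A_p$ generates a bounded analytic semigroup (Lemma 2.1 with $\lambda_1>0$ even gives exponential boundedness). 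Once this equivalence is in hand, the replacement $A_p\rightsquigarrow A_p-(\lambda_1/2)I_p$ — which is again sectorial with spectrum bounded below by $\Lambda_1-\lambda_1/2>0$ — shows the weighted estimate (3.4) with a constant depending on $n,\Omega,p,q,\kappa,\kappa_s,\lambda_1$, completing the proof; uniqueness then follows since any $W^{2,1}_{p,q}$ solution has $u(0)\in X_{p,q}(\Omega)$ well-defined by the trace theory already used (cf. Lemma 2.6 and the definition of $H^{1,1/2}_{p,q,0}$) and must coincide with $e^{-tA_p}u_0$.
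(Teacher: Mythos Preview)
Your proposal is correct, and the overall architecture (reduce to the bound on $e^{(\lambda_{1}/2)t}A_{p}e^{-tA_{p}}u_{0}$ in $L_{q}(\mathbb{R}_{+};L_{p}(\Omega))$, recover the $W^{2}_{p}$-norm from elliptic regularity as in Theorem~3.1, and handle the lower-order term by Lemma~2.1 with $\alpha=0$) coincides with the paper's. The difference lies in how the key bound $\|e^{(\lambda_{1}/2)t}A_{p}e^{-tA_{p}}u_{0}\|_{L_{q}(\mathbb{R}_{+};L_{p}(\Omega))}\le C\|u_{0}\|_{X_{p,q}(\Omega)}$ is obtained. You invoke the standard trace characterization of $(L_{p}(\Omega),\mathcal{D}(A_{p}))_{1-1/q,q}$ for generators of bounded analytic semigroups and then absorb the weight by the spectral shift $A_{p}\mapsto A_{p}-(\lambda_{1}/2)I_{p}$. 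The paper instead carries out this step by hand: it performs a dyadic decomposition $\mathbb{R}_{+}=\bigcup_{j\in\mathbb{Z}}[2^{j},2^{j+1}]$, introduces the sequence $a_{j}(u_{0})=\max_{2^{j}\le t\le 2^{j+1}}\|\partial_{t}u(t)\|_{L_{p}(\Omega)}$, shows via the two endpoint estimates from Lemma~2.1 ($\alpha=0$ and $\alpha=1$) that the weighted sequence $(e^{\lambda_{1}2^{j}}a_{j}(u_{0}))_{j}$ lies in $l^{0}_{\infty}$ and $l^{1}_{\infty}$ with bounds $C\|u_{0}\|_{X^{1}_{p}(\Omega)}$ and $C\|u_{0}\|_{L_{p}(\Omega)}$ respectively, and then interpolates using $l^{1/q}_{q}=(l^{1}_{\infty},l^{0}_{\infty})_{1/q,q}$ to land on $(X^{1}_{p}(\Omega),L_{p}(\Omega))_{1/q,q}=X_{p,q}(\Omega)$. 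In effect the paper reproves, in this concrete setting, the trace theorem you quote. Your route is shorter and cleaner if one is willing to cite that theorem; the paper's route is more self-contained and makes the dependence on Lemma~2.1 completely explicit. A minor remark: your displayed integral formula for the trace norm has a stray factor (the extra $1/t$), and your reference to ``Lemma~2.6'' for trace theory is off (that lemma concerns $\mathcal{R}$-boundedness), but neither affects the substance of the argument.
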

\begin{proof}
It is sufficient for Theorem 3.2 to be proved that $u(t)=e^{-tA_{p}}u_{0}$ satisfies (3.4).
We can easily see from (2.3) with $\alpha=0$ that
\begin{equation*}
e^{(\lambda_{1}/2)t}\|u(t)\|_{L_{p}(\Omega)}\leq Ce^{-(\lambda_{1}/2)t}\|u_{0}\|_{L_{p}(\Omega)},
\end{equation*}
\begin{equation}
\|e^{(\lambda_{1}/2)t}u\|_{L_{q}(\mathbb{R}_{+};L_{p}(\Omega))}\leq C\|u_{0}\|_{L_{p}(\Omega)},
\end{equation}
where $C$ is a positive constant depending only on $n$, $\Omega$, $p$, $q$, $\kappa$, $\kappa_{s}$ and $\lambda_{1}$.
In order to obtain $L_{p}$-$L_{q}$ estimates for $e^{(\lambda_{1}/2)t}\partial_{t}u$, we calculate as follows:
\begin{equation*}
\begin{split}
\int^{\infty}_{0}e^{(q\lambda_{1}t)/2}\|\partial_{t}u(t)\|^{q}_{L_{p}(\Omega)}dt&=\sum_{j \in \mathbb{Z}}\int^{2^{j+1}}_{2^{j}}e^{(q\lambda_{1}t)/2}\|\partial_{t}u(t)\|^{q}_{L_{p}(\Omega)}dt \\
&\leq\sum_{j \in \mathbb{Z}}e^{q\lambda_{1}2^{j}}(2^{j+1}-2^{j})a_{j}(u_{0})^{q} \\
&=\sum_{j \in \mathbb{Z}}\left(2^{j/q}e^{\lambda_{1}2^{j}}a_{j}(u_{0})\right)^{q},
\end{split}
\end{equation*}
\begin{equation}
\|e^{(\lambda_{1}/2)t}\partial_{t}u\|_{L_{q}(\mathbb{R}_{+};L_{p}(\Omega))}\leq\|(e^{\lambda_{1}2^{j}}a_{j}(u_{0}))_{j \in \mathbb{Z}}\|_{l^{1/q}_{q}},
\end{equation}
where
\begin{equation*}
a_{j}(u_{0})=\max_{2^{j}\leq t\leq 2^{j+1}}\|\partial_{t}u(t)\|_{L_{p}(\Omega)}.
\end{equation*}
Since
\begin{equation*}
\|\partial_{t}u(t)\|_{L_{p}(\Omega)}\leq Ce^{-\lambda_{1}t}\|u_{0}\|_{X^{1}_{p}(\Omega)}
\end{equation*}
for any $t>0$, where $C$ is a positive constant depending only on $n$, $\Omega$, $p$, $\kappa$, $\kappa_{s}$ and $\lambda_{1}$, which follows from (2.3) with $\alpha=0$, we obtain that
\begin{equation*}
\begin{split}
\|(e^{\lambda_{1}2^{j}}a_{j}(u_{0}))_{j \in \mathbb{Z}}\|_{l^{0}_{\infty}}&=\sup_{j \in \mathbb{Z}}e^{\lambda_{1}2^{j}}\max_{2^{j}\leq t\leq 2^{j+1}}\|\partial_{t}u(t)\|_{L_{p}(\Omega)} \\
&\leq \sup_{j \in \mathbb{Z}}\left(e^{\lambda_{1}2^{j}}\max_{2^{j}\leq t\leq 2^{j+1}}Ce^{-\lambda_{1}t}\right)\|u_{0}\|_{X^{1}_{p}(\Omega)} \\
&=C\|u_{0}\|_{X^{1}_{p}(\Omega)},
\end{split}
\end{equation*}
where $C$ is a positive constant depending only on $n$, $\Omega$, $p$, $\kappa$, $\kappa_{s}$ and $\lambda_{1}$.
Similarly to the above estimate, since
\begin{equation*}
\|\partial_{t}u(t)\|_{L_{p}(\Omega)}\leq Ct^{-1}e^{-\lambda_{1}t}\|u_{0}\|_{L_{p}(\Omega)}
\end{equation*}
for any $t>0$, where $C$ is a positive constant depending only on $n$, $\Omega$, $p$, $\kappa$, $\kappa_{s}$ and $\lambda_{1}$, which is clear from (2.3) with $\alpha=1$, we have the following inequality:
\begin{equation*}
\begin{split}
\|(e^{\lambda_{1}2^{j}}a_{j}(u_{0}))_{j \in \mathbb{Z}}\|_{l^{1}_{\infty}}&=\sup_{j \in \mathbb{Z}}2^{j}e^{\lambda_{1}2^{j}}\max_{2^{j}\leq t\leq 2^{j+1}}\|\partial_{t}u(t)\|_{L_{p}(\Omega)} \\
&\leq \sup_{j \in \mathbb{Z}}\left(2^{j}e^{\lambda_{1}2^{j}}\max_{2^{j}\leq t\leq 2^{j+1}}Ct^{-1}e^{-\lambda_{1}t}\right)\|u_{0}\|_{L_{p}(\Omega)} \\
&=C\|u_{0}\|_{L_{p}(\Omega)},
\end{split}
\end{equation*}
where $C$ is a positive constant depending only on $n$, $\Omega$, $p$, $\kappa$, $\kappa_{s}$ and $\lambda_{1}$.
Therefore, we can conclude from $l^{s}_{q}=(l^{1}_{\infty},l^{0}_{\infty})_{s,q}$ for any $0<s<1$ that
\begin{equation}
\|(e^{\lambda_{1}2^{j}}a_{j}(u_{0}))_{j \in \mathbb{Z}}\|_{l^{s}_{q}}\leq C\|u_{0}\|_{(X^{1}_{p}(\Omega),L_{p}(\Omega))_{s,q}}
\end{equation}
for any $0<s<1$, where $C$ is a positive constant depending only on $n$, $\Omega$, $p$, $q$, $s$, $\kappa$, $\kappa_{s}$ and $\lambda_{1}$.
By $(X^{1}_{p}(\Omega),L_{p}(\Omega))_{s,q}=(L_{p}(\Omega),X^{1}_{p}(\Omega))_{1-s,q}$ and letting $s=1/q$, it is obvious from (3.6), (3.7) that
\begin{equation}
\|e^{(\lambda_{1}/2)t}\partial_{t}u\|_{L_{q}(\mathbb{R}_{+};L_{p}(\Omega))}\leq C\|u_{0}\|_{X_{p,q}(\Omega)},
\end{equation}
where $C$ is a positive constant depending only on $n$, $\Omega$, $p$, $q$, $\kappa$, $\kappa_{s}$ and $\lambda_{1}$.
Moreover, as for $\|e^{(\lambda_{1}/2)t}u\|_{L_{q}(\mathbb{R}_{+};W^{2}_{p}(\Omega))}$, it is derived from Theorem 3.1 with $f=-\partial_{t}u$, $g=0$ that
\begin{equation}
\|u(t)\|_{W^{2}_{p}(\Omega)}\leq C\|\partial_{t}u(t)\|_{L_{p}(\Omega)}
\end{equation}
for any $t>0$, where $C$ is a positive constant depending only on $n$, $\Omega$, $p$, $\kappa$ and $\kappa_{s}$.
We can easily see from (3.8), (3.9) that
\begin{equation}
\|e^{(\lambda_{1}/2)t}u\|_{L_{q}(\mathbb{R}_{+};W^{2}_{p}(\Omega))}\leq C\|u_{0}\|_{X_{p,q}(\Omega)},
\end{equation}
where $C$ is a positive constant depending only on $n$, $\Omega$, $p$, $q$, $\kappa$, $\kappa_{s}$ and $\lambda_{1}$.
Therefore, (3.5), (3.8), (3.10) clearly lead to (3.4).
\end{proof}

\subsection{$L_{p}$-$L_{q}$ estimates in $\mathbb{R}^{n}\times\mathbb{R}$ and in $\mathbb{R}^{n}_{+}\times\mathbb{R}$ with constant coefficients}
In the case where $\kappa$ is a positive constant, first of all, we discuss the following problem in $\mathbb{R}^{n}\times\mathbb{R}$:
\begin{equation}
\partial_{t}u-\kappa\Delta u+u=f \ \mathrm{in} \ \mathbb{R}^{n}\times\mathbb{R}.
\end{equation}
We can utilize the operator-valued Fourier multiplier theorem on $L_{p}$ to obtain the following lemma:
\begin{lemma}
Let $\kappa$ be a positive constant, $1<p<\infty$, $1<q<\infty$, $f \in L_{q,0}(\mathbb{R}_{+};L_{p}(\mathbb{R}^{n}))$.
Then $(3.11)$ has uniquely a solution $u \in W^{2,1}_{p,q,0}(\mathbb{R}^{n}\times\mathbb{R}_{+})$ satisfying
\begin{equation}
u(x,t)=\mathcal{L}^{-1}_{(\xi,\lambda_{2})}\left[\frac{\mathcal{L}_{(x,t)}[f(x,t)](\xi,\lambda)}{\lambda+\kappa|\xi|^{2}+1}\right](x,t),
\end{equation}
\begin{equation}
\sum^{2}_{k=0}\lambda^{k/2}_{1}\|e^{-\lambda_{1}t}u\|_{L_{q}(\mathbb{R};W^{2-k}_{p}(\mathbb{R}^{n}))}+\sum^{2}_{k=1}\|e^{-\lambda_{1}t}u\|_{H^{k/2}_{q}(\mathbb{R};W^{2-k}_{p}(\mathbb{R}^{n}))}\leq C\|e^{-\lambda_{1}t}f\|_{L_{q}(\mathbb{R};L_{p}(\mathbb{R}^{n}))}
\end{equation}
for any $\lambda_{1}\geq 0$, where $C$ is a positive constant depending only on $n$, $p$, $q$ and $\kappa$.
\end{lemma}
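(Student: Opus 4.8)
The plan is to construct $u$ from the Fourier--Laplace formula (3.13) and to derive (3.14) from the operator-valued Fourier multiplier theorem, Lemma 2.5, applied in the time variable with values in the UMD space $L_{p}(\mathbb{R}^{n})$. First I would note that $m(\xi,\lambda):=(\lambda+\kappa|\xi|^{2}+1)^{-1}$ is holomorphic and bounded on $\{\mathrm{Re}\,\lambda\geq 0\}\times\mathbb{R}^{n}$, so that for $f\in L_{q,0}(\mathbb{R}_{+};L_{p}(\mathbb{R}^{n}))$ the right-hand side of (3.13) defines a function $u$ that solves (3.11) and, by the Paley--Wiener theorem, vanishes for $t<0$. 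Uniqueness is immediate: a solution $u\in W^{2,1}_{p,q,0}(\mathbb{R}^{n}\times\mathbb{R}_{+})$ of (3.11) with $f\equiv 0$ satisfies $(\lambda+\kappa|\xi|^{2}+1)\mathcal{L}_{(x,t)}[u]\equiv 0$ on $\{\mathrm{Re}\,\lambda\geq 0\}$, and since $\lambda+\kappa|\xi|^{2}+1$ never vanishes there, $u\equiv 0$. So the whole content is the estimate (3.14), which for $\lambda_{1}=0$ also delivers $u\in W^{2,1}_{p,q,0}(\mathbb{R}^{n}\times\mathbb{R}_{+})$.

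Next I would reduce (3.14) to a family of Fourier-multiplier estimates. Put $F:=e^{-\lambda_{1}t}f\in L_{q}(\mathbb{R};L_{p}(\mathbb{R}^{n}))$, which is still supported in $t\geq 0$. Differentiating (3.13) and using (2.1)--(2.2), every term on the left-hand side of (3.14) equals $K_{\widetilde m}F:=\mathcal{F}^{-1}_{(\xi,\lambda_{2})}[\widetilde m(\xi,\lambda_{1}+\sqrt{-1}\lambda_{2})\mathcal{F}_{(x,t)}[F]]$ for a symbol $\widetilde m$ equal to a scalar function of $(\lambda_{1},\lambda_{2})$ --- one of $1$, $\lambda_{1}^{1/2}$, $\lambda_{1}$, $\lambda_{1}+\sqrt{-1}\lambda_{2}$, $(1+\lambda_{2}^{2})^{1/4}$ --- times $(\sqrt{-1}\xi)^{\alpha}m$ with $|\alpha|\leq 2$, and each such $\widetilde m$ is bounded on $\{\mathrm{Re}\,\lambda\geq 0\}\times\mathbb{R}^{n}$. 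The terms with $|\alpha|=2$ --- the pure second-order spatial derivatives $\partial_{i}\partial_{j}u$ --- are the only ones whose symbol does not decay as $|\xi|\to\infty$, so their inverse $\xi$-transform is not an $L_{1,\mathrm{loc}}$ convolution kernel and Lemma 2.6 does not apply to them directly; for these I would instead write $\partial_{i}\partial_{j}=(\partial_{i}\partial_{j}(-\Delta)^{-1})\Delta$, use the equation $\kappa\Delta u=\partial_{t}u+u-f$, and invoke the boundedness on $L_{p}(\mathbb{R}^{n})$ of the Riesz-type (Mikhlin) multiplier $\partial_{i}\partial_{j}(-\Delta)^{-1}$, reducing them to $\partial_{t}u$, $u$ and $f$ in $L_{q}(\mathbb{R};L_{p}(\mathbb{R}^{n}))$. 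After this reduction every remaining $\widetilde m$ is at most first order in $\xi$ and decays at infinity, so its inverse $\xi$-transform is a genuine $L_{1,\mathrm{loc}}$ kernel, and (3.14) becomes the assertion that $\|K_{\widetilde m}\|_{\mathcal{B}(L_{q}(\mathbb{R};L_{p}(\mathbb{R}^{n})))}\leq C$ uniformly in $\lambda_{1}\geq 0$ for each of the finitely many such $\widetilde m$.

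For each such $\widetilde m$ I would regard $\lambda_{2}\mapsto M_{\widetilde m}(\lambda_{2})\in\mathcal{B}(L_{p}(\mathbb{R}^{n}))$, the $x$-Fourier multiplier with symbol $\xi\mapsto\widetilde m(\xi,\lambda_{1}+\sqrt{-1}\lambda_{2})$, so that $K_{\widetilde m}$ is the operator-valued multiplier $M_{\widetilde m}$ acting in $t$; by Lemma 2.5, applied with $X=Y=L_{p}(\mathbb{R}^{n})$, it then suffices to show that $\{M_{\widetilde m}(\lambda_{2})\}_{\lambda_{2}\neq 0}$ and $\{\lambda_{2}d_{\lambda_{2}}M_{\widetilde m}(\lambda_{2})\}_{\lambda_{2}\neq 0}$ are $\mathcal{R}$-bounded on $\mathcal{B}(L_{p}(\mathbb{R}^{n}))$, uniformly in $\lambda_{1}\geq 0$. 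This I would obtain from Lemma 2.6. Writing $m=\kappa^{-1}(|\xi|^{2}+a^{2})^{-1}$ with $a=a(\lambda):=(\kappa^{-1}(\lambda+1))^{1/2}$, the convolution kernel of $M_{\widetilde m}(\lambda_{2})$ is a scalar in $(\lambda_{1},\lambda_{2})$ times $\kappa^{-1}\partial_{x}^{\alpha}G_{a}$ with $|\alpha|\leq 1$, where $G_{a}$ is the Bessel-type kernel of $(|\xi|^{2}+a^{2})^{-1}$. The $L_{2}(\mathbb{R}^{n})$-operator bound $\|M_{\widetilde m}(\lambda_{2})\|_{\mathcal{B}(L_{2})}=\|\widetilde m(\cdot,\lambda_{1}+\sqrt{-1}\lambda_{2})\|_{L_{\infty}}\leq C$ follows by Plancherel's theorem from $|\lambda_{1}+\sqrt{-1}\lambda_{2}+\kappa|\xi|^{2}+1|\geq c(1+|\lambda_{2}|+|\xi|^{2})$ and the arithmetic--geometric inequality applied to the numerator. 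The Hörmander-type kernel bound $\sum_{|\beta|=1}|\partial_{x}^{\beta}(\mathrm{kernel})(x)|\leq C|x|^{-(n+1)}$ for $x\neq 0$, uniformly in $\lambda_{1}\geq 0$ and $\lambda_{2}\neq 0$, follows from standard pointwise bounds for $G_{a}$ and its $x$-derivatives (which can be read off from Lemma 2.4 after a parabolic rescaling) together with the crucial estimate $\mathrm{Re}\,a\geq c(1+\lambda_{1}^{1/2}+|\lambda_{2}|^{1/2})$ --- valid since $\mathrm{Re}(a^{2})=\kappa^{-1}(\lambda_{1}+1)\geq\kappa^{-1}>0$ forces $|\mathrm{arg}\,a|<\pi/4$, whence $\mathrm{Re}\,a\geq 2^{-1/2}|a|$ and $|a|\geq c(1+\lambda_{1}^{1/2}+|\lambda_{2}|^{1/2})$: the factor $e^{-(\mathrm{Re}\,a)|x|}$ present in $G_{a}$ both upgrades the weaker $|x|$-decay of $\partial_{x}^{\beta}(\mathrm{kernel})$ to $|x|^{-(n+1)}$ for $|x|\geq 1$ and absorbs the powers of $|a|$ --- hence of $(1+\lambda_{2}^{2})^{1/4}$, $\lambda_{1}^{1/2}$, $\lambda_{1}$ --- produced by the numerator. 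Since $|\lambda_{2}|\leq|\lambda_{1}+\sqrt{-1}\lambda_{2}+\kappa|\xi|^{2}+1|$, differentiating $\widetilde m$ once in $\lambda_{2}$ and multiplying by $\lambda_{2}$ keeps the symbol in the same class, so the same estimates apply to $\{\lambda_{2}d_{\lambda_{2}}M_{\widetilde m}(\lambda_{2})\}$. Lemma 2.6 then supplies the two $\mathcal{R}$-bounds with constant depending only on $n$, $p$, $\kappa$, Lemma 2.5 gives $\|K_{\widetilde m}\|\leq C(n,p,q,\kappa)$, and summing over the finitely many $\widetilde m$, together with the reduction of the second-order terms, yields (3.14); with the first step this proves the lemma. (The $\langle d_{t}\rangle^{1/2}$-terms of (3.14) are covered above through the symbols $(1+\lambda_{2}^{2})^{1/4}m$ and $(1+\lambda_{2}^{2})^{1/4}(\sqrt{-1}\xi_{j})m$; alternatively, they follow from the $W^{2,1}_{p,q}$-bound just obtained and the whole-space analogue of the mixed-derivative estimate, Lemma 2.8.)

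I expect the main obstacle to be the uniform-in-$\lambda_{1}$ Hörmander kernel estimate: one has to use the precise exponential decay rate $\mathrm{Re}\,a\gtrsim 1+\lambda_{1}^{1/2}+|\lambda_{2}|^{1/2}$ of $G_{a}$ to neutralize the growth in $\lambda_{1}$ and $\lambda_{2}$ of the scalar symbol factors, and one must correctly single out the genuinely second-order symbol (the one producing $\partial_{i}\partial_{j}u$), which has no $L_{1,\mathrm{loc}}$ convolution kernel and therefore must be routed through the differential equation rather than estimated by Lemma 2.6 directly.
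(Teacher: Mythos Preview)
Your approach is correct and uses the same machinery as the paper --- Lemma~2.5 fed by the $\mathcal{R}$-boundedness criterion of Lemma~2.6 --- but differs in two points of execution that are worth noting.

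The main difference is your handling of the second-order spatial derivatives. You route $\partial_{i}\partial_{j}u$ through the equation $\kappa\Delta u=\partial_{t}u+u-f$ and the Riesz multipliers $\partial_{i}\partial_{j}(-\Delta)^{-1}$, on the grounds that the $|\alpha|=2$ kernels are not $L_{1,\mathrm{loc}}$ so Lemma~2.6 does not apply. The paper takes the opposite view and applies Lemma~2.6 uniformly to \emph{all} symbols $\lambda_{2}^{k/2}(\sqrt{-1}\xi)^{\alpha}(\lambda+\kappa|\xi|^{2}+1)^{-1}$ and $(1+\lambda_{2}^{2})^{k/4}(\sqrt{-1}\xi)^{\alpha}(\lambda+\kappa|\xi|^{2}+1)^{-1}$ with $k+|\alpha|\leq 2$. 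The $L_{1,\mathrm{loc}}$ clause in Lemma~2.6 is inessential: the underlying vector-valued Calder\'on--Zygmund argument from \cite{Shibata 2} uses only the uniform $L_{2}$-bound (2.12) and the off-diagonal kernel estimate (2.13), and the paper obtains (2.13) for every $k+|\alpha|\leq 2$ not from explicit Bessel-kernel formulas but from the abstract decay result Lemma~2.4, applied with $m=n$, $\mu=1$ to the symbol multiplied by one extra factor $(\sqrt{-1}\xi)^{\beta}$, $|\beta|=1$. Your Riesz-transform detour is correct but avoidable; the paper's route is shorter and more uniform.

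A second, smaller difference: you argue $u(t)=0$ for $t<0$ by Paley--Wiener, whereas the paper deduces it a posteriori from the estimate (3.13) itself, observing that $\lambda_{1}\|u\|_{L_{q}(\mathbb{R}_{-};L_{p})}\leq C\|f\|_{L_{q}(\mathbb{R};L_{p})}$ uniformly in $\lambda_{1}>0$ and letting $\lambda_{1}\to\infty$.
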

\begin{proof}
Since $C^{\infty}_{0}(G\times\mathbb{R}_{+})$ is dense in $L_{q,0}(\mathbb{R}_{+};L_{p}(G))$ for any open set $G$ in $\mathbb{R}^{n}$, we can assume that $f \in C^{\infty}_{0}(\mathbb{R}^{n}\times\mathbb{R}_{+})$.
By applying the Fourier-Laplace transform with respect to $(x,t) \in \mathbb{R}^{n}\times\mathbb{R}$ to (3.11), it follows that
\begin{equation*}
(\lambda+\kappa|\xi|^{2}+1)\mathcal{L}_{(x,t)}[u]=\mathcal{L}_{(x,t)}[f] \ \mathrm{in} \ \mathbb{R}^{n}\times\mathbb{R},
\end{equation*}
where $\xi \in \mathbb{R}^{n}$, $\lambda \in \mathbb{C}$, $\lambda=\lambda_{1}+\sqrt{-1}\lambda_{2}$, $\lambda_{1}, \lambda_{2} \in \mathbb{R}$.
Therefore, $u$ can be defined as in (3.12).
It is derived from (2.1), (2.2), (3.12) that we have the following formulas:
\begin{equation*}
P^{k,\alpha}_{\lambda_{1}}(x,\lambda_{2}):=\mathcal{F}^{-1}_{\xi}\left[\frac{\lambda^{k/2}_{2}(\sqrt{-1}\xi)^{\alpha}}{\lambda+\kappa|\xi|^{2}+1}\right](x,\lambda_{2}),
\end{equation*}
\begin{equation*}
Q^{k,\alpha}_{\lambda_{1}}(x,\lambda_{2}):=\mathcal{F}^{-1}_{\xi}\left[\frac{(1+\lambda^{2}_{2})^{k/4}(\sqrt{-1}\xi)^{\alpha}}{\lambda+\kappa|\xi|^{2}+1}\right](x,\lambda_{2}),
\end{equation*}
\begin{equation*}
(M^{k,\alpha}_{\lambda_{1}}(\lambda_{2})f)(x,\lambda_{2}):=\int_{\mathbb{R}^{n}}P^{k,\alpha}_{\lambda_{1}}(x-y,\lambda_{2})f(y)dy,
\end{equation*}
\begin{equation*}
(N^{k,\alpha}_{\lambda_{1}}(\lambda_{2})f)(x,\lambda_{2}):=\int_{\mathbb{R}^{n}}Q^{k,\alpha}_{\lambda_{1}}(x-y,\lambda_{2})f(y)dy,
\end{equation*}
\begin{equation}
\lambda^{k/2}_{1}\partial^{\alpha}_{x}(e^{-\lambda_{1}t}u)(x,t)=\mathcal{F}^{-1}_{\lambda_{2}}[(M^{k,\alpha}_{\lambda_{1}}(\lambda_{2})\mathcal{F}_{t}[e^{-\lambda_{1}t}f(x,t)])(x,\lambda_{2})](x,t),
\end{equation}
\begin{equation}
\langle \partial_{t} \rangle^{k/2}\partial^{\alpha}_{x}(e^{-\lambda_{1}t}u)(x,t)=\mathcal{F}^{-1}_{\lambda_{2}}[(N^{k,\alpha}_{\lambda_{1}}(\lambda_{2})\mathcal{F}_{t}[e^{-\lambda_{1}t}f(x,t)])(x,\lambda_{2})](x,t)
\end{equation}
for any $k \in \mathbb{Z}$, $k\geq 0$, $\alpha \in \mathbb{Z}^{n}$, $\alpha\geq 0$, $k+|\alpha|\leq 2$.
Set
\begin{equation*}
\mathcal{M}^{k,\alpha}_{\lambda_{1},0}=\{M^{k,\alpha}_{\lambda_{1}}(\lambda_{2})\}_{\lambda_{2} \in \mathbb{R}\setminus\{0\}},
\end{equation*}
\begin{equation*}
\mathcal{M}^{k,\alpha}_{\lambda_{1},1}=\{\lambda_{2}\partial_{\lambda_{2}}M^{k,\alpha}_{\lambda_{1}}(\lambda_{2})\}_{\lambda_{2} \in \mathbb{R}\setminus\{0\}},
\end{equation*}
\begin{equation*}
K^{k,\alpha}_{\lambda_{1}}=\mathcal{F}^{-1}_{\lambda_{2}}M^{k,\alpha}_{\lambda_{1}}\mathcal{F}_{t}, \ \mathcal{D}(K^{k,\alpha}_{\lambda_{1}})=C^{\infty}_{0}(\mathbb{R}\setminus\{0\};L_{2}(\mathbb{R}^{n})),
\end{equation*}
\begin{equation*}
\mathcal{N}^{k,\alpha}_{\lambda_{1},0}=\{N^{k,\alpha}_{\lambda_{1}}(\lambda_{2})\}_{\lambda_{2} \in \mathbb{R}\setminus\{0\}},
\end{equation*}
\begin{equation*}
\mathcal{N}^{k,\alpha}_{\lambda_{1},1}=\{\lambda_{2}\partial_{\lambda_{2}}N^{k,\alpha}_{\lambda_{1}}(\lambda_{2})\}_{\lambda_{2} \in \mathbb{R}\setminus\{0\}},
\end{equation*}
\begin{equation*}
L^{k,\alpha}_{\lambda_{1}}=\mathcal{F}^{-1}_{\lambda_{2}}N^{k,\alpha}_{\lambda_{1}}\mathcal{F}_{t}, \ \mathcal{D}(L^{k,\alpha}_{\lambda_{1}})=C^{\infty}_{0}(\mathbb{R}\setminus\{0\};L_{2}(\mathbb{R}^{n})).
\end{equation*}
We can easily see that
\begin{equation}
|\lambda+\kappa|\xi|^{2}+1|\geq c(|\lambda|+|\xi|^{2}+1)
\end{equation}
for any $\xi \in \mathbb{R}^{n}$, $\lambda \in \mathbb{C}$, $\lambda_{1}\geq 0$, where $c$ is a positive constant depending only on $\kappa$.
By applying the Plancherel theorem to $P^{k,\alpha}_{\lambda_{1}}$, it follows from (3.16) that
\begin{equation}
\|M^{k,\alpha}_{\lambda_{1}}(\lambda_{2})\|_{\mathcal{B}(L_{2}(\mathbb{R}^{n}))}\leq C
\end{equation}
for any $\lambda \in \mathbb{C}$, $\lambda_{1}\geq 0$, $k \in \mathbb{Z}$, $k\geq 0$, $\alpha \in \mathbb{Z}^{n}$, $\alpha\geq 0$, $k+|\alpha|\leq 2$, where $C$ is a positive constant depending only on $\kappa$.
We also obtain from (3.16) that
\begin{equation}
\left|\partial^{\gamma}_{\xi}\left(\frac{\lambda^{k/2}_{2}(\sqrt{-1}\xi)^{\alpha}}{\lambda+\kappa|\xi|^{2}+1}(\sqrt{-1}\xi)^{\beta}\right)\right|\leq C|\xi|^{|\beta|-|\gamma|}
\end{equation}
for any $\xi \in \mathbb{R}^{n}\setminus\{0\}$, $\lambda \in \mathbb{C}$, $\lambda_{1}\geq 0$, $k \in \mathbb{Z}$, $k\geq 0$, $\alpha \in \mathbb{Z}^{n}$, $\alpha\geq 0$, $k+|\alpha|\leq 2$, $\beta \in \mathbb{Z}^{n}$, $\beta\geq 0$, $\gamma \in \mathbb{Z}^{n}$, $\gamma\geq 0$, where $C$ is a positive constant depending only on $\kappa$, $\beta$ and $\gamma$.
It is derived from (3.18) and Lemma 2.4 with $m=n$, $\mu=1$, $g(\xi)=\mathcal{F}_{x}[P^{k,\alpha}_{\lambda_{1}}(x,\lambda_{2})](\xi)(\sqrt{-1}\xi)^{\beta}$ that
\begin{equation}
\sum_{\beta \in \mathbb{Z}^{n}, \beta\geq 0, |\beta|=1}|\partial^{\beta}_{x}P^{k,\alpha}_{\lambda_{1}}(x,\lambda_{2})|\leq C|x|^{-(n+1)}
\end{equation}
for any $x \in \mathbb{R}^{n}\setminus\{0\}$, $\lambda \in \mathbb{C}$, $\lambda_{1}\geq 0$, $k \in \mathbb{Z}$, $k\geq 0$, $\alpha \in \mathbb{Z}^{n}$, $\alpha\geq 0$, $k+|\alpha|\leq 2$, where $C$ is a positive constant depending only on $n$ and $\kappa$.
Therefore, from (3.17), (3.19) and Lemma 2.6, $\mathcal{M}^{k,\alpha}_{\lambda_{1},0}$ is $\mathcal{R}$-bounded on $\mathcal{B}(L_{p}(\mathbb{R}^{n}))$ for any $\lambda_{1}\geq 0$, $k \in \mathbb{Z}$, $k\geq 0$, $\alpha \in \mathbb{Z}^{n}$, $\alpha\geq 0$, $k+|\alpha|\leq 2$, $\mathcal{R}(\mathcal{M}^{k,\alpha}_{\lambda_{1},0})\leq C_{0}$, where $C_{0}$ is a positive constant depending only on $n$, $p$ and $\kappa$.
Similarly to $\mathcal{M}^{k,\alpha}_{\lambda_{1},0}$, we can conclude from Lemma 2.6 that $\mathcal{M}^{k,\alpha}_{\lambda_{1},1}$ is also $\mathcal{R}$-bounded on $\mathcal{B}(L_{p}(\mathbb{R}^{n}))$ for any $\lambda_{1}\geq 0$, $k \in \mathbb{Z}$, $k\geq 0$, $\alpha \in \mathbb{Z}^{n}$, $\alpha\geq 0$, $k+|\alpha|\leq 2$, $\mathcal{R}(\mathcal{M}^{k,\alpha}_{\lambda_{1},1})\leq C_{1}$, where $C_{1}$ is a positive constant depending only on $n$, $p$ and $\kappa$.
It follows from Lemma 2.5 that $K^{k,\alpha}_{\lambda_{1}}$ is extended to a bounded linear operator $K^{k,\alpha}_{\lambda_{1}} \in \mathcal{B}(L_{q}(\mathbb{R};L_{p}(\mathbb{R}^{n})))$ for any $\lambda_{1}\geq 0$, $k \in \mathbb{Z}$, $k\geq 0$, $\alpha \in \mathbb{Z}^{n}$, $\alpha\geq 0$, $k+|\alpha|\leq 2$, $\|K^{k,\alpha}_{\lambda_{1}}\|_{\mathcal{B}(L_{q}(\mathbb{R};L_{p}(\mathbb{R}^{n})))}\leq C$, where $C$ is a positive constant depending only on $n$, $p$, $q$ and $\kappa$.
The same argument as above shows that $\mathcal{N}^{k,\alpha}_{\lambda_{1},0}$ and $\mathcal{N}^{k,\alpha}_{\lambda_{1},1}$ are $\mathcal{R}$-bounded on $\mathcal{B}(L_{p}(\mathbb{R}^{n}))$, therefore, $L^{k,\alpha}_{\lambda_{1}}$ is extended to a bounded linear operator $L^{k,\alpha}_{\lambda_{1}} \in \mathcal{B}(L_{q}(\mathbb{R};L_{p}(\mathbb{R}^{n})))$ for any $\lambda_{1}\geq 0$, $k \in \mathbb{Z}$, $k\geq 0$, $\alpha \in \mathbb{Z}^{n}$, $\alpha\geq 0$, $k+|\alpha|\leq 2$, $\|L^{k,\alpha}_{\lambda_{1}}\|_{\mathcal{B}(L_{q}(\mathbb{R};L_{p}(\mathbb{R}^{n})))}\leq C$, where $C$ is a positive constant depending only on $n$, $p$, $q$ and $\kappa$.
It follows from (3.14), (3.15), $K^{k,\alpha}_{\lambda_{1}} \in \mathcal{B}(L_{q}(\mathbb{R};L_{p}(\mathbb{R}^{n})))$, $L^{k,\alpha}_{\lambda_{1}} \in \mathcal{B}(L_{q}(\mathbb{R};L_{p}(\mathbb{R}^{n})))$ that
\begin{equation}
\lambda^{k/2}_{1}\|\partial^{\alpha}_{x}(e^{-\lambda_{1}t}u)\|_{L_{q}(\mathbb{R};L_{p}(\mathbb{R}^{n}))}\leq C\|e^{-\lambda_{1}t}f\|_{L_{q}(\mathbb{R};L_{p}(\mathbb{R}^{n}))},
\end{equation}
\begin{equation}
\|\langle \partial_{t} \rangle^{k/2}\partial^{\alpha}_{x}(e^{-\lambda_{1}t}u)\|_{L_{q}(\mathbb{R};L_{p}(\mathbb{R}^{n}))}\leq C\|e^{-\lambda_{1}t}f\|_{L_{q}(\mathbb{R};L_{p}(\mathbb{R}^{n}))}
\end{equation}
for any $\lambda_{1}\geq 0$, $k \in \mathbb{Z}$, $k\geq 0$, $\alpha \in \mathbb{Z}^{n}$, $\alpha\geq 0$, $k+|\alpha|\leq 2$, where $C$ is a positive constant depending only on $n$, $p$, $q$ and $\kappa$.
It is clear from (3.20), (3.31) that (3.13) is established.
We remark that $e^{-\lambda_{1}t}\geq 1$ for any $\lambda_{1}\geq 0$, $t>0$ and that $f(t)=0$ for any $t<0$, and obtain from (3.13) that
\begin{equation*}
\begin{split}
\lambda_{1}\|u\|_{L_{q}(\mathbb{R}_{-};L_{p}(\mathbb{R}^{n}))}&\leq \lambda_{1}\|e^{-\lambda_{1}t}u\|_{L_{q}(\mathbb{R}_{-};L_{p}(\mathbb{R}^{n}))} \\
&\leq C\|e^{-\lambda_{1}t}f\|_{L_{q}(\mathbb{R};L_{p}(\mathbb{R}^{n}))} \\
&\leq C\|f\|_{L_{q}(\mathbb{R};L_{p}(\mathbb{R}^{n}))},
\end{split}
\end{equation*}
\begin{equation}
\|u\|_{L_{q}(\mathbb{R}_{-};L_{p}(\mathbb{R}^{n}))}\leq C\lambda^{-1}_{1}\|f\|_{L_{q}(\mathbb{R};L_{p}(\mathbb{R}^{n}))}
\end{equation}
for any $\lambda_{1}>0$, where $C$ is a positive constant depending only on $n$, $p$, $q$ and $\kappa$.
It is obvious from (3.22) with $\lambda_{1}\rightarrow \infty$ that $u(t)=0$ for any $t<0$, therefore, $u \in W^{2,1}_{p,q,0}(\mathbb{R}^{n}\times\mathbb{R}_{+})$.
\end{proof}
Second, we consider the following problem in $\mathbb{R}^{n}_{+}\times\mathbb{R}$:
\begin{equation}
\begin{split}
&\partial_{t}u-\kappa\Delta u+u=f & \mathrm{in} \ \mathbb{R}^{n}_{+}\times\mathbb{R}, \\
&\kappa\partial_{x_{n}}u|_{\partial\mathbb{R}^{n}_{+}}=h & \mathrm{on} \ \partial\mathbb{R}^{n}_{+}\times\mathbb{R}.
\end{split}
\end{equation}
Lemmas 2.6--2.8 and 3.1 admit that $L_{p}$-$L_{q}$ estimates for time-global solutions to (3.23) are established as follows:
\begin{lemma}
Let $\kappa$ be a positive constant, $1<p<\infty$, $1<q<\infty$, $f \in L_{q,0}(\mathbb{R}_{+};L_{p}(\mathbb{R}^{n}_{+}))$, $h \in H^{1,1/2}_{p,q,0}(\mathbb{R}^{n}_{+}\times\mathbb{R}_{+})$.
Then $(3.23)$ has uniquely a solution $u \in W^{2,1}_{p,q,0}(\mathbb{R}^{n}_{+}\times\mathbb{R}_{+})$ satisfying
\begin{equation}
\begin{split}
&\lambda_{1}\|e^{-\lambda_{1}t}u\|_{L_{q}(\mathbb{R};L_{p}(\mathbb{R}^{n}_{+}))}+\|e^{-\lambda_{1}t}u\|_{W^{2,1}_{p,q}(\mathbb{R}^{n}_{+}\times\mathbb{R})} \\
&\leq C(\|e^{-\lambda_{1}t}f\|_{L_{q}(\mathbb{R};L_{p}(\mathbb{R}^{n}_{+}))}+\lambda^{1/2}_{1}\|e^{-\lambda_{1}t}h\|_{L_{q}(\mathbb{R};L_{p}(\mathbb{R}^{n}_{+}))}+\|e^{-\lambda_{1}t}h\|_{H^{1,1/2}_{p,q,0}(\mathbb{R}^{n}_{+}\times\mathbb{R})})
\end{split}
\end{equation}
for any $\lambda_{1}\geq 0$, where $C$ is a positive constant depending only on $n$, $p$, $q$ and $\kappa$.
\end{lemma}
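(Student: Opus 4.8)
The plan is to remove the interior term $f$ with the whole-space result of Lemma 3.1 and then to solve the remaining Neumann problem on $\mathbb{R}^{n}_{+}\times\mathbb{R}$ by a partial Fourier--Laplace transform in $(x',t)$, writing the solution and its derivatives as finite sums of the convolution operators $w_{1}$, $w_{2}$ estimated in Lemmas 2.10 and 2.11. By density it suffices to treat $f,h\in C^{\infty}_{0}(\mathbb{R}^{n}_{+}\times\mathbb{R}_{+})$. First I would extend $f$ by zero to $\mathbb{R}^{n}\times\mathbb{R}$, let $u_{1}\in W^{2,1}_{p,q,0}(\mathbb{R}^{n}\times\mathbb{R}_{+})$ be the solution of (3.11) furnished by Lemma 3.1, and restrict $u_{1}$ to $\mathbb{R}^{n}_{+}$; then $v:=u-u_{1}$ must solve $\partial_{t}v-\kappa\Delta v+v=0$ in $\mathbb{R}^{n}_{+}\times\mathbb{R}$ with $\kappa\partial_{x_{n}}v|_{\partial\mathbb{R}^{n}_{+}}=\tilde{h}$, where $\tilde{h}$ denotes the restriction to $\mathbb{R}^{n}_{+}\times\mathbb{R}$ of $h-\kappa\partial_{x_{n}}u_{1}$, regarded as a function on the whole half-space. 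Reading off the $k=0$ and $k=1$ terms of (3.13) shows that $\tilde{h}\in H^{1,1/2}_{p,q,0}(\mathbb{R}^{n}_{+}\times\mathbb{R}_{+})$ and that $\|e^{-\lambda_{1}t}\tilde{h}\|_{H^{1,1/2}_{p,q,0}(\mathbb{R}^{n}_{+}\times\mathbb{R}_{+})}+\lambda^{1/2}_{1}\|e^{-\lambda_{1}t}\tilde{h}\|_{L_{q}(\mathbb{R};L_{p}(\mathbb{R}^{n}_{+}))}$ is dominated by the right-hand side of (3.24) with a constant depending only on $n$, $p$, $q$ and $\kappa$; since $u_{1}$ itself already obeys (3.13), it remains to prove (3.24) for $v$ in terms of $\tilde{h}$.

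Applying $\mathcal{L}_{(x',t)}$ to this reduced problem turns the equation into the ordinary differential equation $\partial^{2}_{x_{n}}\hat{v}=A(\xi',\lambda)^{2}\hat{v}$ on $x_{n}>0$, whose unique bounded solution with $\kappa\partial_{x_{n}}\hat{v}|_{x_{n}=0}=\hat{\tilde{h}}(\xi',0,\lambda)$ is
\begin{equation*}
\hat{v}(\xi',x_{n},\lambda)=-\kappa^{-1}A(\xi',\lambda)^{-1}e^{-A(\xi',\lambda)x_{n}}\hat{\tilde{h}}(\xi',0,\lambda).
\end{equation*}
Because Lemmas 2.10 and 2.11 apply to functions defined on all of $\mathbb{R}^{n}_{+}$, not to traces, I would eliminate the trace by the identity
\begin{equation*}
\hat{\tilde{h}}(\xi',0,\lambda)=\int^{\infty}_{0}A(\xi',\lambda)e^{-A(\xi',\lambda)y_{n}}\hat{\tilde{h}}(\xi',y_{n},\lambda)\,dy_{n}-\int^{\infty}_{0}e^{-A(\xi',\lambda)y_{n}}\partial_{y_{n}}\hat{\tilde{h}}(\xi',y_{n},\lambda)\,dy_{n},
\end{equation*}
which is legitimate since $e^{-A(\xi',\lambda)y_{n}}\hat{\tilde{h}}(\xi',y_{n},\lambda)\to 0$ as $y_{n}\to\infty$.

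Substituting this identity into $\hat{v}$ and multiplying by the symbols of the relevant operators --- $(i\xi')^{\alpha'}$ with $|\alpha'|\le 2$, $-A(\xi',\lambda)(i\xi')^{\alpha'}$ with $|\alpha'|\le 1$, $A(\xi',\lambda)^{2}$, $\lambda$ and $\lambda_{1}$ --- the Fourier--Laplace symbols of $v$, of the spatial derivatives $\partial^{\gamma}_{x}v$ $(|\gamma|\le 2)$, of $\partial_{t}v$ and of $\lambda_{1}v$ become finite sums of the two shapes $\int^{\infty}_{0}A^{-1}e^{-A(x_{n}+y_{n})}M_{1}\hat{g}\,dy_{n}$ and $\int^{\infty}_{0}Ae^{-A(x_{n}+y_{n})}M_{2}\hat{g}\,dy_{n}$, where $g$ runs over $\tilde{h}$, the first-order derivatives $\partial_{x_{j}}\tilde{h}$ $(j=1,\dots,n)$, the multiple $\lambda^{1/2}_{1}\tilde{h}$ and the function whose $\mathcal{L}_{(x',t)}$-symbol equals $(1+\lambda^{2}_{2})^{1/4}\hat{\tilde{h}}$ --- each of which has its $e^{-\lambda_{1}t}$-weighted $L_{q}(\mathbb{R};L_{p}(\mathbb{R}^{n}_{+}))$-norm bounded by the right-hand side of (3.24). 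One distributes the surplus powers of $i\xi'$, of $A(\xi',\lambda)$ and of $\lambda^{1/2}_{1}$ between $g$ and the multipliers so that, using $|\xi'|+|\lambda|^{1/2}\le c^{-1}\mathrm{Re}\,A(\xi',\lambda)$ together with the derivative bounds of Lemma 2.9 for $A(\xi',\lambda)^{s}$, $|\xi'|^{s}$ and $e^{-A(\xi',\lambda)x_{n}}$, each $M_{1}$ satisfies the hypothesis of Lemma 2.10 with some $\beta\le 1$ and each $M_{2}$ satisfies the hypothesis of Lemma 2.11, all with constants independent of $\lambda_{1}\ge 0$. Applying Lemmas 2.10 and 2.11 termwise and summing then yields (3.24) for $v$, hence for $u=u_{1}+v$. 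Uniqueness follows because the partial Fourier--Laplace transform of any $W^{2,1}_{p,q,0}$-solution of the homogeneous problem solves the homogeneous ordinary differential equation with vanishing conormal trace and therefore vanishes; and $u=0$ for $t<0$ is obtained, for the smooth compactly supported data, by letting $\lambda_{1}\to\infty$ in (3.24) restricted to $t<0$ exactly as at the end of the proof of Lemma 3.1, after which the density argument completes the proof.

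I expect the principal difficulty to lie in the two top-order symbols, those of $\partial_{t}v$ and of $\partial^{2}_{x_{n}}v$: after the trace reconstruction these carry the genuinely unbounded factor $A(\xi',\lambda)$ multiplying $\hat{\tilde{h}}$, and handling them forces one to exhaust the \emph{full} regularity of $\tilde{h}$ --- its tangential and normal first derivatives, its half-order time derivative, and the $\lambda^{1/2}_{1}$-weighted $L_{q}L_{p}$ bound --- by splitting $A(\xi',\lambda)$ as a bounded multiplier times one of $|\xi'|$, $(1+\lambda^{2}_{2})^{1/4}$ or $\lambda^{1/2}_{1}$, and then checking that every constant produced, in particular in the verification of the hypotheses of Lemmas 2.10 and 2.11, is independent of $\lambda_{1}$. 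This bookkeeping, rather than any conceptual point, is where the work lies.
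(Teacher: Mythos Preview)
Your proposal is correct and follows essentially the same route as the paper: reduce to a homogeneous interior problem on the half-space via Lemma~3.1, solve the Neumann problem by a partial Fourier--Laplace transform, rewrite the boundary trace as a $y_{n}$-integral, and feed each resulting term into Lemmas~2.10 and~2.11 after the appropriate symbol splitting; the $t<0$ vanishing via $\lambda_{1}\to\infty$ is also as in the paper.

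The one genuine difference is the extension of $f$. The paper uses the \emph{odd} extension $f_{o}$, so that the whole-space solution $v$ automatically has $\partial_{x_{n}}v|_{x_{n}=0}=0$ and the remaining half-space problem (their (3.27)) carries boundary data exactly $h$. You instead extend $f$ by zero, which forces you to work with the modified datum $\tilde{h}=h-\kappa\partial_{x_{n}}u_{1}$ and to verify that $\partial_{x_{n}}u_{1}\in H^{1,1/2}_{p,q,0}(\mathbb{R}^{n}_{+}\times\mathbb{R}_{+})$ together with the $\lambda^{1/2}_{1}$-weighted bound; as you note, this follows from the $k=0,1$ terms of (3.13). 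Both choices work; the odd extension simply keeps the boundary data equal to $h$ and avoids that extra verification. A second minor difference: the paper recovers the $\partial^{2}_{x_{n}}w$ estimate from the equation (their (3.37)) rather than by treating the symbol $A(\xi',\lambda)^{2}$ directly, but since $\partial^{2}_{x_{n}}\hat{v}=A^{2}\hat{v}$ these are equivalent.
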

\begin{proof}
Let $\varphi_{o}$ be the odd extension of $\varphi$ to $\mathbb{R}$ defined as
\begin{equation*}
\varphi_{o}(x,t)=
\begin{cases}
\varphi(x,t) & \mathrm{if} \ x_{n}>0, \\
-\varphi(x',-x_{n},t) & \mathrm{if} \ x_{n}<0.
\end{cases}
\end{equation*}
Then it follows from Lemma 3.1 that
\begin{equation*}
\partial_{t}v-\kappa\Delta v+v=f_{o} \ \mathrm{in} \ \mathbb{R}^{n}\times\mathbb{R}
\end{equation*}
has uniquely a solution $v \in W^{2,1}_{p,q,0}(\mathbb{R}^{n}\times\mathbb{R}_{+})$ satisfying
\begin{equation}
v(x,t)=\mathcal{L}^{-1}_{(\xi,\lambda_{2})}\left[\frac{\mathcal{L}_{(x,t)}[f_{o}(x,t)](\xi,\lambda)}{\lambda+\kappa|\xi|^{2}+1}\right](x,t),
\end{equation}
\begin{equation}
\sum^{2}_{k=0}\lambda^{k/2}_{1}\|e^{-\lambda_{1}t}v\|_{L_{q}(\mathbb{R};W^{2-k}_{p}(\mathbb{R}^{n}))}+\sum^{2}_{k=1}\|e^{-\lambda_{1}t}v\|_{H^{k/2}_{q}(\mathbb{R};W^{2-k}_{p}(\mathbb{R}^{n}))}\leq C\|e^{-\lambda_{1}t}f_{o}\|_{L_{q}(\mathbb{R};L_{p}(\mathbb{R}^{n}))}
\end{equation}
for any $\lambda_{1}\geq 0$, where $C$ is a positive constant depending only on $n$, $p$, $q$ and $\kappa$.
Moreover, it is derived from (3.25) that $\partial_{x_{n}}v(x',0,t)=0$ for any $(x',t) \in \mathbb{R}^{n-1}\times\mathbb{R}$.
By letting $u=v+w$, it is clear from $\partial_{x_{n}}v|_{\partial\mathbb{R}^{n}_{+}}=0$ that $w$ must satisfy the following problem in $\mathbb{R}^{n}_{+}\times\mathbb{R}$:
\begin{equation}
\begin{split}
&\partial_{t}w-\kappa\Delta w+w=0 & \mathrm{in} \ \mathbb{R}^{n}_{+}\times\mathbb{R}, \\
&\kappa\partial_{x_{n}}w|_{\partial\mathbb{R}^{n}_{+}}=h & \mathrm{on} \ \partial\mathbb{R}^{n}_{+}\times\mathbb{R}.
\end{split}
\end{equation}
Therefore, it is essential for Lemma 3.2 to be proved that (3.27) has uniquely a solution $w \in W^{2,1}_{p,q,0}(\mathbb{R}^{n}_{+}\times\mathbb{R}_{+})$ satisfying
\begin{equation}
\begin{split}
\lambda_{1}\|e^{-\lambda_{1}t}w\|_{L_{q}(\mathbb{R};L_{p}(\mathbb{R}^{n}_{+}))}&+\|e^{-\lambda_{1}t}w\|_{W^{2,1}_{p,q}(\mathbb{R}^{n}_{+}\times\mathbb{R})} \\
&\leq C(\lambda^{1/2}_{1}\|e^{-\lambda_{1}t}h\|_{L_{q}(\mathbb{R};L_{p}(\mathbb{R}^{n}_{+}))}+\|e^{-\lambda_{1}t}h\|_{H^{1,1/2}_{p,q,0}(\mathbb{R}^{n}_{+}\times\mathbb{R})})
\end{split}
\end{equation}
for any $\lambda_{1}\geq 0$, where $C$ is a positive constant depending only on $n$, $p$, $q$ and $\kappa$.

We can utilize the Fourier-Laplace transform with respect to $(x',t) \in \mathbb{R}^{n-1}\times\mathbb{R}$ to rewrite (3.27) by the following problem:
\begin{equation*}
\begin{split}
&(\kappa|\xi'|^{2}+\lambda+1)\mathcal{L}_{(x',t)}[w]-\kappa\partial^{2}_{x_{n}}\mathcal{L}_{(x',t)}[w]=0 & \mathrm{in} \ \{x_{n}>0\}, \\
&\kappa\partial_{x_{n}}\mathcal{L}_{(x',t)}[w]|_{x_{n}=0}=\mathcal{L}_{(x',t)}[h] & \mathrm{on} \ \{x_{n}=0\}.
\end{split}
\end{equation*}
The inverse Fourier-Laplace transform with respect to $(\xi',\lambda_{2}) \in \mathbb{R}^{n-1}\times\mathbb{R}$ admits that $w$ is defined as
\begin{equation}
\begin{split}
&w(x,t)=-\mathcal{L}^{-1}_{(\xi',\lambda_{2})}[\kappa^{-1}A(\xi',\lambda)^{-1}\hat{h}(\xi',0,\lambda)e^{-A(\xi',\lambda)x_{n}}](x',t), \\
&\hat{h}(\xi',x_{n},\lambda)=\mathcal{L}_{(x',t)}[h(x,t)](\xi',x_{n},\lambda).
\end{split}
\end{equation}
It is clear from (3.29) that we have the following formula:
\begin{equation}
\begin{split}
w(x,t)&=-\int^{\infty}_{0}\partial_{y_{n}}\mathcal{L}^{-1}_{(\xi',\lambda_{2})}[\kappa^{-1}A(\xi',\lambda)^{-1}\hat{h}(\xi',y_{n},\lambda)e^{-A(\xi',\lambda)(x_{n}+y_{n})}](x',t)dy_{n} \\
&=\int^{\infty}_{0}\mathcal{L}^{-1}_{(\xi',\lambda_{2})}[\kappa^{-1}A(\xi',\lambda)^{-1}e^{-A(\xi',\lambda)(x_{n}+y_{n})}(A(\xi',\lambda)-\partial_{y_{n}})\hat{h}(\xi',y_{n},\lambda)](x',t)dy_{n}.
\end{split}
\end{equation}
Let 1 be decomposed into
\begin{equation*}
1=\frac{A(\xi',\lambda)^{2}}{A(\xi',\lambda)^{2}}=\frac{1}{A(\xi',\lambda)}\left\{\frac{\kappa^{-1}(\lambda+1)}{A(\xi',\lambda)}-\sum^{n-1}_{i=1}\frac{(\sqrt{-1}\xi_{i})^{2}}{A(\xi',\lambda)}\right\}.
\end{equation*}
Then it follows from (3.30) that
\begin{equation*}
\begin{split}
w(x,t)=&\int^{\infty}_{0}\mathcal{L}^{-1}_{(\xi',\lambda_{2})}[\kappa^{-1}A(\xi',\lambda)^{-1}e^{-A(\xi',\lambda)(x_{n}+y_{n})}A(\xi',\lambda)^{-1}\kappa^{-1}(\lambda+1)\hat{h}(\xi',y_{n},\lambda)](x',t)dy_{n} \\
&-\int^{\infty}_{0}\mathcal{L}^{-1}_{(\xi',\lambda_{2})}\left[\kappa^{-1}A(\xi',\lambda)^{-1}e^{-A(\xi',\lambda)(x_{n}+y_{n})}A(\xi',\lambda)^{-1}\sum^{n-1}_{i=1}(\sqrt{-1}\xi_{i})^{2}\hat{h}(\xi',y_{n},\lambda)\right](x',t)dy_{n} \\
&-\int^{\infty}_{0}\mathcal{L}^{-1}_{(\xi',\lambda_{2})}[\kappa^{-1}A(\xi',\lambda)^{-1}e^{-A(\xi',\lambda)(x_{n}+y_{n})}\partial_{y_{n}}\hat{h}(\xi',y_{n},\lambda)](x',t)dy_{n}.
\end{split}
\end{equation*}
Moreover, we can utilize the identity $\lambda+1=\lambda_{1}+(1+\sqrt{-1}\lambda_{2})$ $(\lambda_{1}, \lambda_{2} \in \mathbb{R})$ to obtain that
\begin{equation}
\begin{split}
w(x,t)=&\int^{\infty}_{0}\mathcal{L}^{-1}_{(\xi',\lambda_{2})}[A(\xi',\lambda)^{-1}e^{-A(\xi',\lambda)(x_{n}+y_{n})}\lambda_{1}^{1/2}A(\xi',\lambda)^{-1}\kappa^{-2}\lambda^{1/2}_{1}\hat{h}(\xi',y_{n},\lambda)](x',t)dy_{n} \\
&+\int^{\infty}_{0}\mathcal{L}^{-1}_{(\xi',\lambda_{2})}[A(\xi',\lambda)^{-1}e^{-A(\xi',\lambda)(x_{n}+y_{n})}(1+\sqrt{-1}\lambda_{2})(1+\lambda^{2}_{2})^{-1/4}A(\xi',\lambda)^{-1} \\
&\times\kappa^{-2}(1+\lambda^{2}_{2})^{1/4}\hat{h}(\xi',y_{n},\lambda)](x',t)dy_{n} \\
&-\int^{\infty}_{0}\mathcal{L}^{-1}_{(\xi',\lambda_{2})}\left[A(\xi',\lambda)^{-1}e^{-A(\xi',\lambda)(x_{n}+y_{n})}\sum^{n-1}_{i=1}(\sqrt{-1}\xi_{i})A(\xi',\lambda)^{-1}\kappa^{-1}(\sqrt{-1}\xi_{i})\hat{h}(\xi',y_{n},\lambda)\right](x',t)dy_{n} \\
&-\int^{\infty}_{0}\mathcal{L}^{-1}_{(\xi',\lambda_{2})}[A(\xi',\lambda)^{-1}e^{-A(\xi',\lambda)(x_{n}+y_{n})}\kappa^{-1}\partial_{y_{n}}\hat{h}(\xi',y_{n},\lambda)](x',t)dy_{n}.
\end{split}
\end{equation}
Since it is clear from the inverse Fourier-Laplace transform with respect to $(\xi',\lambda_{2}) \in \mathbb{R}^{n-1}\times\mathbb{R}$ that
\begin{equation*}
\mathcal{L}^{-1}_{(\xi',\lambda_{2})}[\kappa^{-2}\lambda^{1/2}_{1}\hat{h}(\xi',x_{n},\lambda)](x',t)=\kappa^{-2}\lambda^{1/2}_{1}h(x,t),
\end{equation*}
\begin{equation*}
\mathcal{L}^{-1}_{(\xi',\lambda_{2})}[\kappa^{-2}(1+\lambda^{2}_{2})^{1/4}\hat{h}(\xi',x_{n},\lambda)](x',t)=\kappa^{-2}e^{\lambda_{1}t}\langle \partial_{t} \rangle^{1/2}(e^{-\lambda_{1}t}h)(x,t),
\end{equation*}
\begin{equation*}
\mathcal{L}^{-1}_{(\xi',\lambda_{2})}\left[\kappa^{-1}\sum^{n-1}_{i=1}(\sqrt{-1}\xi_{i})\hat{h}(\xi',x_{n},\lambda)\right](x',t)=\kappa^{-1}\sum^{n-1}_{i=1}\partial_{x_{i}}h(x,t),
\end{equation*}
\begin{equation*}
\mathcal{L}^{-1}_{(\xi',\lambda_{2})}[\kappa^{-1}\partial_{x_{n}}\hat{h}(\xi',x_{n},\lambda)](x',t)=\kappa^{-1}\partial_{x_{n}}h(x,t),
\end{equation*}
we can apply Lemma 2.10 to $w$ and $\partial_{t}w$ by letting
\begin{equation*}
M_{1}(\xi',\lambda)=
\begin{cases}
(\lambda_{1}+1)\lambda_{1}^{1/2}A(\xi',\lambda)^{-1}, \\
(\lambda_{1}+1)(1+\sqrt{-1}\lambda_{2})(1+\lambda^{2}_{2})^{-1/4}A(\xi',\lambda)^{-1}, \\
(\lambda_{1}+1)\displaystyle\sum^{n-1}_{i=1}(\sqrt{-1}\xi_{i})A(\xi',\lambda)^{-1}, \\
\lambda_{1}+1 & \mathrm{if} \ w_{1}(x,t)=(\lambda_{1}+1)w(x,t), \\
\sqrt{-1}\lambda_{2}\lambda_{1}^{1/2}A(\xi',\lambda)^{-1}, \\
\sqrt{-1}\lambda_{2}(1+\sqrt{-1}\lambda_{2})(1+\lambda^{2}_{2})^{-1/4}A(\xi',\lambda)^{-1}, \\
\sqrt{-1}\lambda_{2}\displaystyle\sum^{n-1}_{i=1}(\sqrt{-1}\xi_{i})A(\xi',\lambda)^{-1}, \\
\sqrt{-1}\lambda_{2} & \mathrm{if} \ w_{1}(x,t)=e^{\lambda_{1}t}\partial_{t}(e^{-\lambda_{1}t}w(x,t)).
\end{cases}
\end{equation*}
Then we notice the identity $e^{-\lambda_{1}t}\partial_{t}w=\lambda_{1}e^{-\lambda_{1}t}w+\partial_{t}(e^{-\lambda_{1}t}w)$, and obtain from (3.31) that
\begin{equation}
\begin{split}
&\lambda_{1}\|e^{-\lambda_{1}t}w\|_{L_{q}(\mathbb{R};L_{p}(\mathbb{R}^{n}_{+}))}+\|e^{-\lambda_{1}t}w\|_{W^{1}_{q}(\mathbb{R};L_{p}(\mathbb{R}^{n}_{+}))} \\
&\leq C(\lambda^{1/2}_{1}\|e^{-\lambda_{1}t}h\|_{L_{q}(\mathbb{R};L_{p}(\mathbb{R}^{n}_{+}))}+\|\langle \partial_{t} \rangle^{1/2}(e^{-\lambda_{1}t}h)\|_{L_{q}(\mathbb{R};L_{p}(\mathbb{R}^{n}_{+}))}+\|\nabla(e^{-\lambda_{1}t}h)\|_{L_{q}(\mathbb{R};L_{p}(\mathbb{R}^{n}_{+}))})
\end{split}
\end{equation}
for any $\lambda_{1}\geq 0$, where $C$ is a positive constant depending only on $n$, $p$, $q$ and $\kappa$.
As for $\|e^{-\lambda_{1}t}w\|_{L_{q}(\mathbb{R};W^{2}_{p}(\mathbb{R}^{n}_{+}))}$, first, it can be easily seen from (3.30) that
\begin{equation}
\begin{split}
\partial^{\alpha'}_{x'}\partial_{x_{i}}w(x,t)=&\int^{\infty}_{0}\mathcal{L}^{-1}_{(\xi',\lambda_{2})}[A(\xi',\lambda)e^{-A(\xi',\lambda)(x_{n}+y_{n})}(\sqrt{-1}\xi_{i})A(\xi',\lambda)^{-1}\kappa^{-1}(\sqrt{-1}\xi')^{\alpha'}\hat{h}(\xi',y_{n},\lambda)](x',t)dy_{n} \\
&-\int^{\infty}_{0}\mathcal{L}^{-1}_{(\xi',\lambda_{2})}[A(\xi',\lambda)e^{-A(\xi',\lambda)(x_{n}+y_{n})}(\sqrt{-1}\xi')^{\alpha'}(\sqrt{-1}\xi_{i})A(\xi',\lambda)^{-2} \\
&\times\kappa^{-1}\partial_{y_{n}}\hat{h}(\xi',y_{n},\lambda)](x',t)dy_{n}
\end{split}
\end{equation}
for any $\alpha' \in \mathbb{Z}^{n-1}$, $\alpha'\geq 0$, $|\alpha'|\leq 1$, $i=1, \cdots, n-1$.
Since it is clear from the inverse Fourier-Laplace transform with respect to $(\xi',\lambda_{2}) \in \mathbb{R}^{n-1}\times\mathbb{R}$ that
\begin{equation*}
\mathcal{L}^{-1}_{(\xi',\lambda_{2})}[\kappa^{-1}(\sqrt{-1}\xi)^{\alpha'}\hat{h}(\xi',x_{n},\lambda)](x',t)=\kappa^{-1}\partial^{\alpha'}_{x'}h(x,t),
\end{equation*}
\begin{equation*}
\mathcal{L}^{-1}_{(\xi',\lambda_{2})}[\kappa^{-1}\partial_{x_{n}}\hat{h}(\xi',x_{n},\lambda)](x',t)=\kappa^{-1}\partial_{x_{n}}h(x,t),
\end{equation*}
we can apply Lemma 2.11 to $\partial^{\alpha'}_{x'}\partial_{x_{i}}w$ by letting
\begin{equation*}
M_{2}(\xi',\lambda)=
\begin{cases}
(\sqrt{-1}\xi_{i})A(\xi',\lambda)^{-1} & \mathrm{if} \ \hat{g}(\xi',x_{n},\lambda)=(\sqrt{-1}\xi')^{\alpha'}\hat{h}(\xi',x_{n},\lambda), \\
(\sqrt{-1}\xi')^{\alpha'}(\sqrt{-1}\xi_{i})A(\xi',\lambda)^{-2} & \mathrm{if} \ \hat{g}(\xi',x_{n},\lambda)=\partial_{x_{n}}\hat{h}(\xi',x_{n},\lambda).
\end{cases}
\end{equation*}
Then we obtain from (3.33) that
\begin{equation}
\|\partial^{\alpha'}_{x'}\partial_{x_{i}}(e^{-\lambda_{1}t}w)\|_{L_{q}(\mathbb{R};L_{p}(\mathbb{R}^{n}_{+}))}\leq C(\|\partial^{\alpha'}_{x'}(e^{-\lambda_{1}t}h)\|_{L_{q}(\mathbb{R};L_{p}(\mathbb{R}^{n}_{+}))}+\|\partial_{x_{n}}(e^{-\lambda_{1}t}h)\|_{L_{q}(\mathbb{R};L_{p}(\mathbb{R}^{n}_{+}))})
\end{equation}
for any $\lambda_{1}\geq 0$, $\alpha' \in \mathbb{Z}^{n-1}$, $\alpha'\geq 0$, $|\alpha'|\leq 1$, $i=1, \cdots, n-1$, where $C$ is a positive constant depending only on $n$, $p$, $q$ and $\kappa$.
Second, it follows from (3.30) that
\begin{equation*}
\partial_{x_{n}}w(x,t)=\mathcal{L}^{-1}_{(\xi',\lambda_{2})}[\kappa^{-1}e^{-A(\xi',\lambda)x_{n}}\hat{h}(\xi',0,\lambda)](x',t),
\end{equation*}
\begin{equation}
\begin{split}
\partial^{\beta'}_{x'}\partial_{x_{n}}w(x,t)=&\int^{\infty}_{0}\mathcal{L}^{-1}_{(\xi',\lambda_{2})}[A(\xi',\lambda)e^{-A(\xi',\lambda)(x_{n}+y_{n})}\kappa^{-1}(\sqrt{-1}\xi')^{\beta'}\hat{h}(\xi',y_{n},\lambda)](x',t)dy_{n} \\
&-\int^{\infty}_{0}\mathcal{L}^{-1}_{(\xi',\lambda_{2})}[e^{-A(\xi',\lambda)(x_{n}+y_{n})}(\sqrt{-1}\xi')^{\beta'}\kappa^{-1}\partial_{y_{n}}\hat{h}(\xi',y_{n},\lambda)](x',t)dy_{n}
\end{split}
\end{equation}
for any $\beta' \in \mathbb{Z}^{n-1}$, $\beta'\geq 0$, $|\beta'|\leq 1$.
By letting
\begin{equation*}
M_{2}(\xi',\lambda)=
\begin{cases}
1 & \mathrm{if} \ \hat{g}(\xi',x_{n},\lambda)=(\sqrt{-1}\xi')^{\beta'}\hat{h}(\xi',x_{n},\lambda), \\
(\sqrt{-1}\xi')^{\beta'}A(\xi',\lambda)^{-1} & \mathrm{if} \ \hat{g}(\xi',x_{n},\lambda)=\partial_{x_{n}}\hat{h}(\xi',x_{n},\lambda),
\end{cases}
\end{equation*}
we can utilize Lemma 2.11 to obtain from (3.35) that
\begin{equation}
\|\partial^{\beta'}_{x'}\partial_{x_{n}}(e^{-\lambda_{1}t}w)\|_{L_{q}(\mathbb{R};L_{p}(\mathbb{R}^{n}_{+}))}\leq C(\|\partial^{\beta'}_{x'}(e^{-\lambda_{1}t}h)\|_{L_{q}(\mathbb{R};L_{p}(\mathbb{R}^{n}_{+}))}+\|\partial_{x_{n}}(e^{-\lambda_{1}t}h)\|_{L_{q}(\mathbb{R};L_{p}(\mathbb{R}^{n}_{+}))})
\end{equation}
for any $\lambda_{1}\geq 0$, $\beta' \in \mathbb{Z}^{n-1}$, $\beta'\geq 0$, $|\beta'|\leq 1$, where $C$ is a positive constant depending only on $n$, $p$, $q$ and $\kappa$.
Finally, we notice the equation
\begin{equation}
\begin{split}
\partial^{2}_{x_{n}}w(x,t)&=-\mathcal{L}^{-1}_{(\xi',\lambda_{2})}[\kappa^{-1}A(\xi',\lambda)e^{-A(\xi',\lambda)x_{n}}\hat{h}(\xi',0,\lambda)](x',t) \\
&=\sum^{n-1}_{i=1}\partial^{2}_{x_{i}}w(x,t)-\kappa^{-1}\partial_{t}w(x,t)-\kappa^{-1}w(x,t)
\end{split}
\end{equation}
and the identity $e^{-\lambda_{1}t}\partial_{t}w=\lambda_{1}e^{-\lambda_{1}t}w+\partial_{t}(e^{-\lambda_{1}t}w)$, and obtain from (3.32), (3.34) that
\begin{equation}
\begin{split}
&\|\partial^{2}_{x_{n}}(e^{-\lambda_{1}t}w)\|_{L_{q}(\mathbb{R};L_{p}(\mathbb{R}^{n}_{+}))} \\
&\leq C(\lambda^{1/2}_{1}\|e^{-\lambda_{1}t}h\|_{L_{q}(\mathbb{R};L_{p}(\mathbb{R}^{n}_{+}))}+\|\langle \partial_{t} \rangle^{1/2}(e^{-\lambda_{1}t}h)\|_{L_{q}(\mathbb{R};L_{p}(\mathbb{R}^{n}_{+}))}+\|\nabla(e^{-\lambda_{1}t}h)\|_{L_{q}(\mathbb{R};L_{p}(\mathbb{R}^{n}_{+}))})
\end{split}
\end{equation}
for any $\lambda_{1}\geq 0$, where $C$ is a positive constant depending only on $n$, $p$, $q$ and $\kappa$.
Therefore, it is clear that (3.28) is established by (3.32), (3.34), (3.36), (3.38).

We remark that $e^{-\lambda_{1}t}\geq 1$ for any $\lambda_{1}\geq 0$, $t>0$ and that $h(t)=0$ for any $t<0$, and obtain from (3.28) that
\begin{equation*}
\begin{split}
\lambda_{1}\|w\|_{L_{q}(\mathbb{R}_{-};L_{p}(\mathbb{R}^{n}_{+}))}&\leq \lambda_{1}\|e^{-\lambda_{1}t}w\|_{L_{q}(\mathbb{R}_{-};L_{p}(\mathbb{R}^{n}_{+}))} \\
&\leq C(\lambda^{1/2}_{1}\|e^{-\lambda_{1}t}h\|_{L_{q}(\mathbb{R};L_{p}(\mathbb{R}^{n}_{+}))}+\|e^{-\lambda_{1}t}h\|_{H^{1,1/2}_{p,q,0}(\mathbb{R}^{n}_{+}\times\mathbb{R})}) \\
&\leq C(\lambda^{1/2}_{1}\|h\|_{L_{q}(\mathbb{R};L_{p}(\mathbb{R}^{n}_{+}))}+\|h\|_{H^{1,1/2}_{p,q,0}(\mathbb{R}^{n}_{+}\times\mathbb{R})}),
\end{split}
\end{equation*}
\begin{equation}
\|w\|_{L_{q}(\mathbb{R}_{-};L_{p}(\mathbb{R}^{n}_{+}))}\leq C(\lambda^{-1/2}_{1}\|h\|_{L_{q}(\mathbb{R};L_{p}(\mathbb{R}^{n}_{+}))}+\lambda^{-1}_{1}\|h\|_{H^{1,1/2}_{p,q,0}(\mathbb{R}^{n}_{+}\times\mathbb{R})})
\end{equation}
for any $\lambda_{1}>0$, where $C$ is a positive constant depending only on $n$, $p$, $q$ and $\kappa$.
It is obvious from (3.39) with $\lambda_{1}\rightarrow \infty$ that $w(t)=0$ for any $t<0$, therefore, $w \in W^{2,1}_{p,q,0}(\mathbb{R}^{n}_{+}\times\mathbb{R}_{+})$.
\end{proof}

\subsection{$L_{p}$-$L_{q}$ estimates in $\Omega\times\mathbb{R}_{+}$ with $u_{0}=0$}
First, we discuss (1.1) with a positive constant $\kappa$, $u_{0}=0$ and $g=0$, that is, the following initial-boundary value problem in $\Omega\times\mathbb{R}_{+}$:
\begin{equation}
\begin{split}
&\partial_{t}u-\kappa\Delta u=f & \mathrm{in} \ \Omega\times\mathbb{R}_{+}, \\
&u|_{t=0}=0 & \mathrm{in} \ \Omega, \\
&\kappa\partial_{\nu}u+\kappa_{s}u|_{\partial\Omega}=0 & \mathrm{on} \ \partial\Omega\times\mathbb{R}_{+}.
\end{split}
\end{equation}
Lemma 3.2 implies that $L_{p}$-$L_{q}$ estimates for time-global solutions to (3.40) are established as follows:
\begin{lemma}
Let $\kappa$ be a positive constant, $\kappa_{s} \in C^{1}(\partial\Omega)$ satisfy $\kappa_{s}>0$ on $\partial\Omega$, $1<p<\infty$, $1<q<\infty$, $f \in C^{\infty}_{0}(\mathbb{R}_{+};L_{p}(\Omega))$.
Then $(3.40)$ has uniquely a solution $u \in W^{2,1}_{p,q,0}(\Omega\times\mathbb{R}_{+})$ satisfying
\begin{equation*}
u(t)=\int^{t}_{0}e^{-(t-s)A_{p}}f(s)ds,
\end{equation*}
\begin{equation}
\|u\|_{W^{2,1}_{p,q}(\Omega\times\mathbb{R}_{+})}\leq C\|f\|_{L_{q}(\mathbb{R}_{+};L_{p}(\Omega))},
\end{equation}
where $C$ is a positive constant depending only on $n$, $\Omega$, $p$, $q$, $\kappa$ and $\kappa_{s}$.
\end{lemma}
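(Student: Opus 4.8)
The plan is to establish the solution formula by analytic‑semigroup theory, derive the base $L_{q}$–$L_{p}$ bound on $u$ from the exponential decay of $\{e^{-tA_{p}}\}_{t\geq 0}$, and then run a localization argument that reduces the $W^{2,1}_{p,q}$ bound to the constant‑coefficient whole‑space and half‑space estimates of Lemmas 3.1 and 3.2.

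\emph{Step 1 (solution formula, membership, uniqueness, base estimate).} For $f \in C^{\infty}_{0}(\mathbb{R}_{+};L_{p}(\Omega))$, standard analytic‑semigroup theory (\cite{Pazy}) shows that $u(t)=\int^{t}_{0}e^{-(t-s)A_{p}}f(s)\,ds$ belongs to $C^{1}(\mathbb{R};L_{p}(\Omega))\cap C(\mathbb{R};\mathcal{D}(A_{p}))$, vanishes for $t\leq 0$, and satisfies $\partial_{t}u+A_{p}u=f$. By Theorem 3.1 with $\lambda=0$ one has $\|v\|_{W^{2}_{p}(\Omega)}\leq C\|A_{p}v\|_{L_{p}(\Omega)}$ for $v \in \mathcal{D}(A_{p})$; writing $u(t)=e^{-(t-T_{0})A_{p}}u(T_{0})$ for $t\geq T_{0}$ (where the support of $f$ in $t$ lies in $(0,T_{0})$) and using Lemma 2.1 with $\alpha=0$, the map $t\mapsto\|u(t)\|_{W^{2}_{p}(\Omega)}+\|\partial_{t}u(t)\|_{L_{p}(\Omega)}$ is continuous on $[0,T_{0}]$, decays exponentially for $t\geq T_{0}$, and vanishes for $t\leq 0$, hence $u \in W^{2,1}_{p,q,0}(\Omega\times\mathbb{R}_{+})$. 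If $u_{1},u_{2} \in W^{2,1}_{p,q,0}(\Omega\times\mathbb{R}_{+})$ both solve (3.40), their difference $w$ lies in $W^{1}_{q}(\mathbb{R};L_{p}(\Omega))\hookrightarrow C(\mathbb{R};L_{p}(\Omega))$, vanishes for $t<0$ so has zero initial trace, and solves (3.3) with $u_{0}=0$; Theorem 3.2 then forces $w=e^{-tA_{p}}0=0$. Finally, Lemma 2.1 with $\alpha=0$ gives $\|e^{-tA_{p}}\|_{\mathcal{B}(L_{p}(\Omega))}\leq Ce^{-\lambda_{1}t}$ for some $0<\lambda_{1}<\Lambda_{1}$, so $u$ is the time‑convolution of $f$ with a kernel in $L_{1}(\mathbb{R}_{+})$, and Young's inequality yields $\|u\|_{L_{q}(\mathbb{R};L_{p}(\Omega))}\leq C\|f\|_{L_{q}(\mathbb{R};L_{p}(\Omega))}$. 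It remains to bound $\|u\|_{L_{q}(\mathbb{R};W^{2}_{p}(\Omega))}+\|\partial_{t}u\|_{L_{q}(\mathbb{R};L_{p}(\Omega))}$.

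\emph{Step 2 (localization).} Rewrite the interior equation as $\partial_{t}u-\kappa\Delta u+u=F$, $F:=f+u$, so that $\|F\|_{L_{q}(\mathbb{R};L_{p}(\Omega))}\leq C\|f\|_{L_{q}(\mathbb{R};L_{p}(\Omega))}$ by Step 1. Cover $\overline{\Omega}$ by finitely many open balls $B_{j}$ of small radius $\rho$, each either contained in $\Omega$ or centred on $\partial\Omega$ and equipped with a $C^{2}$‑diffeomorphism $\Phi_{j}$ flattening $B_{j}\cap\partial\Omega$ and normalized so that $D\Phi_{j}$ is orthogonal at the centre; let $\{\zeta_{j}\}$ be a subordinate partition of unity. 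For an interior index, $\zeta_{j}u$ (extended by $0$) solves $\partial_{t}(\zeta_{j}u)-\kappa\Delta(\zeta_{j}u)+\zeta_{j}u=\zeta_{j}F-\kappa[\Delta,\zeta_{j}]u$ on $\mathbb{R}^{n}\times\mathbb{R}$, vanishes for $t<0$, and has right‑hand side in $L_{q,0}(\mathbb{R}_{+};L_{p}(\mathbb{R}^{n}))$; by the uniqueness clause of Lemma 3.1 it coincides with the solution there and obeys that estimate. For a boundary index, $\tilde{u}_{j}:=(\zeta_{j}u)\circ\Phi_{j}^{-1}$ (in the transported variable $y$) solves a problem $\partial_{t}\tilde{u}_{j}-\kappa\Delta\tilde{u}_{j}+\tilde{u}_{j}=\tilde{g}_{j}$ in $\mathbb{R}^{n}_{+}\times\mathbb{R}$, $\kappa\partial_{y_{n}}\tilde{u}_{j}=\tilde{h}_{j}$ on $\partial\mathbb{R}^{n}_{+}\times\mathbb{R}$, where $\tilde{g}_{j}$ gathers the transported $\zeta_{j}F$, the first‑order commutator $[\Delta,\zeta_{j}]u$, the first‑order terms of the transported Laplacian, and the second‑order remainder $\kappa\sum_{k,l}(a^{kl}_{j}(y)-\delta_{kl})\partial_{y_{k}}\partial_{y_{l}}\tilde{u}_{j}$, whose $C^{1}$ coefficients vanish at the centre, hence are $O(\rho)$ on $\mathrm{supp}\,\zeta_{j}$; and $\tilde{h}_{j}$ gathers the transported $-\kappa_{s}u$, the tangential terms $\sum_{k<n}\beta^{k}_{j}(y')\partial_{y_{k}}\tilde{u}_{j}$ and the normal correction $\kappa(1-\mu_{j}(y'))\partial_{y_{n}}\tilde{u}_{j}$ (again with $C^{1}$ coefficients that are $O(\rho)$ near the centre after normalization), and the transported boundary commutator $\kappa(\partial_{\nu}\zeta_{j})u$. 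Since $\tilde{g}_{j} \in L_{q,0}(\mathbb{R}_{+};L_{p}(\mathbb{R}^{n}_{+}))$ and, using Lemma 2.8 on $\mathbb{R}^{n}_{+}$, $\tilde{h}_{j} \in H^{1,1/2}_{p,q,0}(\mathbb{R}^{n}_{+}\times\mathbb{R})$, Lemma 3.2 with $\lambda_{1}=0$ applies and (by its uniqueness clause) gives the estimate for $\tilde{u}_{j}$.

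\emph{Step 3 (summation and absorption).} Summing the local estimates and using that each $\Phi_{j}$ induces equivalent norms (constants depending on $\Omega$), the right‑hand side splits into: (i) the principal part $\sum_{j}\|\zeta_{j}F\|_{L_{q}(\mathbb{R};L_{p}(\Omega))}\leq C(\|f\|_{L_{q}(\mathbb{R};L_{p}(\Omega))}+\|u\|_{L_{q}(\mathbb{R};L_{p}(\Omega))})\leq C\|f\|_{L_{q}(\mathbb{R};L_{p}(\Omega))}$ by Step 1; (ii) the ``small'' terms, i.e.\ the frozen‑coefficient remainder in $\tilde{g}_{j}$ and the tangential/normal corrections in $\tilde{h}_{j}$, bounded by $C\rho\,\|u\|_{W^{2,1}_{p,q}(\Omega\times\mathbb{R})}$, where the $H^{1/2}_{q}(\mathbb{R};L_{p}(\mathbb{R}^{n}_{+}))$ component of the $H^{1,1/2}_{p,q,0}$‑norm of the corrections $\rho\,\partial_{y'}\tilde{u}_{j}$ is handled by Lemma 2.8 via $\|\partial_{x'}u\|_{H^{1/2}_{q}(\mathbb{R};L_{p}(\Omega))}\leq\|u\|_{H^{1/2}_{q}(\mathbb{R};W^{1}_{p}(\Omega))}\leq C\|u\|_{W^{2,1}_{p,q}(\Omega\times\mathbb{R})}$; and (iii) the genuinely lower‑order terms ($[\Delta,\zeta_{j}]u$, the first‑order Laplacian terms, the transported $\kappa_{s}u$ and $(\partial_{\nu}\zeta_{j})u$), bounded by $C(\|u\|_{L_{q}(\mathbb{R};W^{1}_{p}(\Omega))}+\|u\|_{H^{1/2}_{q}(\mathbb{R};L_{p}(\Omega))})$, which by the interpolation inequality $\|v\|_{W^{1}_{p}(\Omega)}\leq\delta\|v\|_{W^{2}_{p}(\Omega)}+C_{\delta}\|v\|_{L_{p}(\Omega)}$ (applied pointwise in $t$) and the first inequality of Lemma 2.7 with $r$ small is $\leq\delta\|u\|_{W^{2,1}_{p,q}(\Omega\times\mathbb{R})}+C_{\delta}\|u\|_{L_{q}(\mathbb{R};L_{p}(\Omega))}$. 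Choosing $\rho$ and $\delta$ small enough to absorb all multiples of $\|u\|_{W^{2,1}_{p,q}(\Omega\times\mathbb{R})}$ into the left‑hand side gives $\|u\|_{W^{2,1}_{p,q}(\Omega\times\mathbb{R})}\leq C\|u\|_{L_{q}(\mathbb{R};L_{p}(\Omega))}+C\|f\|_{L_{q}(\mathbb{R};L_{p}(\Omega))}\leq C\|f\|_{L_{q}(\mathbb{R}_{+};L_{p}(\Omega))}$, which is (3.41). The main obstacle is precisely Step 3: organizing every perturbation term so that it is either a small multiple of the full $W^{2,1}_{p,q}$‑norm (absorbed by shrinking the balls) or reducible by interpolation to $\|u\|_{L_{q}(\mathbb{R};L_{p}(\Omega))}$, and in particular verifying that the transported boundary data $\tilde{h}_{j}$ genuinely belong to $H^{1,1/2}_{p,q,0}(\mathbb{R}^{n}_{+}\times\mathbb{R})$ with the stated bounds — the point at which Lemma 2.8 is indispensable for the time‑fractional component.
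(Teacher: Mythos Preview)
Your proposal is correct and follows essentially the same route as the paper: solution formula via the semigroup, a base $L_{q}$--$L_{p}$ bound from exponential decay, localization via cut-offs, reduction to Lemmas~3.1 and~3.2 after flattening the boundary, absorption of the small second-order remainders, and handling of the $H^{1/2}_{q}$ contribution by Lemma~2.7. The one noteworthy difference is that the paper first proves the stronger a~priori bound $\|u\|_{L_{q}(\mathbb{R}_{+};W^{1}_{p}(\Omega))}\leq C\|f\|_{L_{q}(\mathbb{R}_{+};L_{p}(\Omega))}$ directly from the smoothing estimate (Lemma~2.1 with $\alpha=1/2$ together with Lemma~2.2) and Young's inequality, so that after localization the residual lower-order term is $\|u\|_{L_{q}(\mathbb{R}_{+};W^{1}_{p}(\Omega))}$, already controlled; you instead establish only $\|u\|_{L_{q}(\mathbb{R}_{+};L_{p}(\Omega))}$ and then reduce the $W^{1}_{p}$ lower-order term by spatial interpolation. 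Both variants close the estimate.
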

\begin{proof}
It is essential for Lemma 3.3 to be proved that $u$ satisfies (3.41).
First, we obtain the following inequality:
\begin{equation}
\|u\|_{L_{q}(\mathbb{R}_{+};W^{1}_{p}(\Omega))}\leq C\|f\|_{L_{q}(\mathbb{R}_{+};L_{p}(\Omega))},
\end{equation}
where $C$ is a positive constant depending only on $n$, $\Omega$, $p$, $\kappa$ and $\kappa_{s}$.
It is clear from (2.3), (2.4) with $\alpha=1/2$ that
\begin{equation*}
\|u(t)\|_{W^{1}_{p}(\Omega)}\leq C\int^{t}_{0}(t-s)^{-1/2}e^{-\lambda_{1}(t-s)}\|f(s)\|_{L_{p}(\Omega)}ds
\end{equation*}
for any $t>0$, $0<\lambda_{1}<\Lambda_{1}$, where $C$ is a positive constant depending only on $n$, $\Omega$, $p$, $\kappa$, $\kappa_{s}$ and $\lambda_{1}$ and that
\begin{equation*}
\int^{t}_{0}(t-s)^{-1/2}e^{-\lambda_{1}(t-s)}\|f(s)\|_{L_{p}(\Omega)}ds=\int^{t}_{0}\|f(t-s)\|_{L_{p}(\Omega)}s^{-1/2}e^{-\lambda_{1}s}ds.
\end{equation*}
It follows from the H\"{o}lder inequality that
\begin{equation*}
\int^{t}_{0}\|f(t-s)\|_{L_{p}(\Omega)}s^{-1/2}e^{-\lambda_{1}s}ds\leq \left(\int^{t}_{0}\|f(t-s)\|^{q}_{L_{p}(\Omega)}s^{-1/2}e^{-\lambda_{1}s}ds\right)^{1/q}\left(\int^{t}_{0}s^{-1/2}e^{-\lambda_{1}s}ds\right)^{1/q^{*}}
\end{equation*}
for any $t>0$, $0<\lambda_{1}<\Lambda_{1}$, where $1<q^{*}<\infty$ is the dual exponent to $q$.
We interchange the order of integration with respect to $t$, and obtain that
\begin{equation*}
\int^{\infty}_{0}\left(\int^{t}_{0}\|f(t-s)\|_{L_{p}(\Omega)}s^{-1/2}e^{-\lambda_{1}s}ds\right)^{q}dt\leq \|f\|^{q}_{L_{q}(\mathbb{R}_{+};L_{p}(\Omega))}\left(\int^{\infty}_{0}s^{-1/2}e^{-\lambda_{1}s}ds\right)^{1+q/q^{*}},
\end{equation*}
\begin{equation*}
\|u\|_{L_{q}(\mathbb{R}_{+};W^{1}_{p}(\Omega))}\leq C\|f\|_{L_{q}(\mathbb{R}_{+};L_{p}(\Omega))}\int^{\infty}_{0}s^{-1/2}e^{-\lambda_{1}s}ds=C\left(\frac{\pi}{\lambda_{1}}\right)^{1/2}\|f\|_{L_{q}(\mathbb{R}_{+};L_{p}(\Omega))}
\end{equation*}
for any $0<\lambda_{1}<\Lambda_{1}$, where $C$ is a positive constant depending only on $n$, $\Omega$, $p$, $\kappa$, $\kappa_{s}$ and $\lambda_{1}$.
Therefore, it is sufficient for Lemma 3.3 to be proved that
\begin{equation}
\|u\|_{W^{2,1}_{p,q}(\Omega\times\mathbb{R}_{+})}\leq C(\|f\|_{L_{q}(\mathbb{R}_{+};L_{p}(\Omega))}+\|u\|_{L_{q}(\mathbb{R}_{+};W^{1}_{p}(\Omega))}),
\end{equation}
where $C$ is a positive constant depending only on $n$, $\Omega$, $p$, $q$, $\kappa$ and $\kappa_{s}$.

Second, we give $L_{p}$-$L_{q}$ interior estimates for time-global solutions to (3.40).
Set $\Omega_{\delta}=\{x \in \Omega \ ; \ \mathrm{dist}(x,\partial\Omega)>\delta\}$ for any $\delta>0$.
Then $\varphi \in C^{\infty}_{0}(\mathbb{R}^{n})$ can be defined as
\begin{equation*}
\begin{cases}
\varphi(x)=1 & \mathrm{if} \ x \in \Omega_{\delta}, \\
0<\varphi(x)<1 & \mathrm{if} \ x \in \Omega_{\delta/2}\setminus\overline{\Omega_{\delta}}, \\
\varphi(x)=0 & \mathrm{if} \ x \in \mathbb{R}^{n}\setminus\overline{\Omega_{\delta/2}}.
\end{cases}
\end{equation*}
It is clear from (3.40) that $\varphi u$ satisfies the following problem in $\mathbb{R}^{n}\times\mathbb{R}_{+}$:
\begin{equation}
\partial_{t}(\varphi u)-\kappa\Delta(\varphi u)+(\varphi u)=f_{\delta}+(\varphi u) \ \mathrm{in} \ \mathbb{R}^{n}\times\mathbb{R}_{+},
\end{equation}
where $f_{\delta}=(\varphi f)-\kappa(\Delta\varphi)u-2\kappa\nabla\varphi\cdot\nabla u$.
Since it follows from $f \in C^{\infty}_{0}(\mathbb{R}_{+};L_{p}(\Omega))$ that $f(t)=0$, $u(t)=0$ for any $t<0$, we can apply Lemma 3.1 with $\lambda_{1}=0$ to (3.44), and obtain that
\begin{equation}
\|\varphi u\|_{L_{q}(\mathbb{R}_{+};W^{2}_{p}(\mathbb{R}^{n}))}+\sum^{2}_{k=1}\|\varphi u\|_{H^{k/2}_{q}(\mathbb{R}_{+};W^{2-k}_{p}(\mathbb{R}^{n}))}\leq C\|f_{\delta}\|_{L_{q}(\mathbb{R}_{+};L_{p}(\mathbb{R}^{n}))},
\end{equation}
where $C$ is a positive constant depending only on $n$, $p$, $q$ and $\kappa$.
Moreover, it is obvious from the definition of $\varphi$ and $f_{\delta}$ that
\begin{equation*}
\|\varphi u\|_{L_{q}(\mathbb{R}_{+};L_{p}(\mathbb{R}^{n}))}\geq \|u\|_{L_{q}(\mathbb{R}_{+};L_{p}(\Omega_{\delta}))},
\end{equation*}
\begin{equation*}
\|f_{\delta}\|_{L_{q}(\mathbb{R}_{+};L_{p}(\mathbb{R}^{n}))}\leq \|f\|_{L_{q}(\mathbb{R}_{+};L_{p}(\Omega))}+C_{\delta}\|u\|_{L_{q}(\mathbb{R}_{+};W^{1}_{p}(\Omega))},
\end{equation*}
where $C_{\delta}$ is a positive constant depending only on $n$, $\Omega$, $p$, $q$, $\kappa$ and $\delta$.
Therefore, it follows from (3.45) that
\begin{equation}
\|u\|_{L_{q}(\mathbb{R}_{+};W^{2}_{p}(\Omega_{\delta}))}+\sum^{2}_{k=1}\|u\|_{H^{k/2}_{q}(\mathbb{R}_{+};W^{2-k}_{p}(\Omega_{\delta}))}\leq C_{\delta}(\|f\|_{L_{q}(\mathbb{R}_{+};L_{p}(\Omega))}+\|u\|_{L_{q}(\mathbb{R}_{+};W^{1}_{p}(\Omega))}),
\end{equation}
where $C_{\delta}$ is a positive constant depending only on $n$, $\Omega$, $p$, $q$, $\kappa$ and $\delta$.

Finally, we establish $L_{p}$-$L_{q}$ estimates near the boundary for time-global solutions to (3.40).
Set $B_{\delta}(x_{0})=\{x \in \mathbb{R}^{n} \ ; \ |x-x_{0}|<\delta\}$ for any $x_{0} \in \partial\Omega$, $\delta>0$.
Then $\varphi \in C^{\infty}_{0}(\mathbb{R}^{n})$ can be defined as
\begin{equation*}
\begin{cases}
\varphi(x)=1 & \mathrm{if} \ x \in B_{\delta}(x_{0}), \\
0<\varphi(x)<1 & \mathrm{if} \ x \in B_{2\delta}(x_{0})\setminus\overline{B_{\delta}(x_{0})}, \\
\varphi(x)=0 & \mathrm{if} \ x \in \mathbb{R}^{n}\setminus\overline{B_{2\delta}(x_{0})}.
\end{cases}
\end{equation*}
It is clear from (3.40) that $\varphi u$ satisfies the following problem in $\Omega\times\mathbb{R}_{+}$:
\begin{equation}
\begin{split}
&\partial_{t}(\varphi u)-\kappa\Delta(\varphi u)+(\varphi u)=f_{\delta}+(\varphi u) & \mathrm{in} \ \Omega\times\mathbb{R}_{+}, \\
&(\varphi u)|_{t=0}=0 & \mathrm{in} \ \Omega, \\
&\kappa\partial_{\nu}(\varphi u)+\kappa_{s}(\varphi u)|_{\partial\Omega}=f_{b,\delta} & \mathrm{on} \ \partial\Omega\times\mathbb{R}_{+},
\end{split}
\end{equation}
where $f_{\delta}=(\varphi f)-\kappa(\Delta\varphi)u-2\kappa\nabla\varphi\cdot\nabla u$, $f_{b,\delta}=\kappa(\partial_{\nu}\varphi)u$ and that there exists the orthogonal matrix $O$ such that $O^{T}\nu(x_{0})=(0,\cdots,0,-1)^{T}$.
Set $x=x_{0}+Oy$, $\widetilde{\Omega}=\{O^{T}(x-x_{0}) \ ; \ x \in \Omega\}$, $v(y,t)=(\varphi u)(x,t)$, $\tilde{\nu}(y)=O^{T}\nu(x)$.
Then it follows from (3.47) that $v$ satisfies the following problem in $\widetilde{\Omega}\times\mathbb{R}_{+}$:
\begin{equation}
\begin{split}
&\partial_{t}v-\kappa\Delta v+v=g_{\delta} & \mathrm{in} \ \widetilde{\Omega}\times\mathbb{R}_{+}, \\
&v|_{t=0}=0 & \mathrm{in} \ \widetilde{\Omega}, \\
&\kappa\partial_{\tilde{\nu}}v+\kappa_{s}v|_{\partial\widetilde{\Omega}}=g_{b,\delta} & \mathrm{on} \ \partial\widetilde{\Omega}\times\mathbb{R}_{+},
\end{split}
\end{equation}
where $g_{\delta}(y,t)=f_{\delta}(x,t)+v(y,t)$, $g_{b,\delta}(y,t)=\kappa(\partial_{\tilde{\nu}}\varphi)(y)u(x,t)$.
Let $\delta$ be a small positive constant satisfying $\mathrm{supp}v\subset \widetilde{B}_{\varepsilon_{0}}(0)$ for any small positive constant $\varepsilon_{0}$, where $\widetilde{B}_{\varepsilon_{0}}(0)=\{y \in \mathbb{R}^{n} \ ; \ |y|<\varepsilon_{0}\}$.
Then there exist a positive constant $\varepsilon_{1}>\varepsilon_{0}$ and $\chi \in C^{2}(\widetilde{B}'_{\varepsilon_{1}}(0))$ such that
\begin{equation*}
\widetilde{B}_{\varepsilon_{0}}(0)\cap\widetilde{\Omega}=\{y=(y',y_{n}) \in \mathbb{R}^{n} \ ; \ y_{n}>\chi(y'), \ |y'|<\varepsilon_{1}\},
\end{equation*}
\begin{equation*}
\widetilde{B}_{\varepsilon_{0}}(0)\cap\partial\widetilde{\Omega}=\{y=(y',y_{n}) \in \mathbb{R}^{n} \ ; \ y_{n}=\chi(y'), \ |y'|<\varepsilon_{1}\},
\end{equation*}
where $\widetilde{B}'_{\varepsilon_{1}}(0)=\{y' \in \mathbb{R}^{n-1} \ ; \ |y'|<\varepsilon_{1}\}$.
Moreover, it follows from $\tilde{\nu}(0)=(0,\cdots,0,-1)^{T}$ that $\chi(0)=0$, $\nabla'\chi(0)=0$, $\tilde{\nu}(y')=(\nabla'\chi(y'),-1)^{T}/\sqrt{1+|\nabla'\chi(y')|^{2}}$.
Let $\delta_{0}\geq\delta$ be a small positive constant satisfying $0<\varepsilon_{0}<\varepsilon_{1}/2$, $\psi \in C^{\infty}_{0}(\mathbb{R}^{n-1})$ be defined as
\begin{equation*}
\begin{cases}
\psi(y')=1 & \mathrm{if} \ |y'|\leq 1, \\
0<\psi(y')<1 & \mathrm{if} \ 1<|y'|<2, \\
\psi(y')=0 & \mathrm{if} \ |y'|\geq 2,
\end{cases}
\end{equation*}
$\omega(y')=\psi(y'/\varepsilon_{0})\chi(y')$, $H^{n}_{\omega}=\{y=(y',y_{n}) \in \mathbb{R}^{n} \ ; \ y_{n}>\omega(y'), \ y' \in \mathbb{R}^{n-1}\}$, $\partial H^{n}_{\omega}=\{y=(y',y_{n}) \in \mathbb{R}^{n} \ ; \ y_{n}=\omega(y'), \ y' \in \mathbb{R}^{n-1}\}$, $\nu_{\omega} \in C^{2}(\partial H^{n}_{\omega})$ be the outward unit normal vector on $\partial H^{n}_{\omega}$.
Then it follows from (3.48) that $v$ satisfies the following problem in $H^{n}_{\omega}\times\mathbb{R}_{+}$:
\begin{equation}
\begin{split}
&\partial_{t}v-\kappa\Delta v+v=g_{\delta} & \mathrm{in} \ H^{n}_{\omega}\times\mathbb{R}_{+}, \\
&v|_{t=0}=0 & \mathrm{in} \ H^{n}_{\omega}, \\
&\kappa\partial_{\nu_{\omega}}v+\kappa_{s}v|_{\partial H_{\omega}}=g_{b,\delta} & \mathrm{on} \ \partial H^{n}_{\omega}\times\mathbb{R}_{+}.
\end{split}
\end{equation}
Moreover, it is clear from the definition of $\omega$ and $\nu_{\omega}$ that $\omega(0)=0$, $\nabla'\omega(0)=0$, $\nu_{\omega}(y')=(\nabla'\omega(y'),-1)^{T}/\sqrt{1+|\nabla'\omega(y')|^{2}}$.
Since $\chi(0)=0$, $\nabla'\chi(0)=0$, we can utilize the following formula:
\begin{equation*}
\nabla'\omega(y')=\frac{1}{\varepsilon_{0}}\nabla'\psi\left(\frac{y'}{\varepsilon_{0}}\right)\chi(y')+\psi\left(\frac{y'}{\varepsilon_{0}}\right)\nabla'\chi(y')
\end{equation*}
and the following Taylor formula:
\begin{equation*}
\chi(y')=\frac{1}{2}\sum_{\alpha' \in \mathbb{Z}^{n-1}, \alpha'\geq 0, |\alpha'|=2}\partial^{\alpha'}_{y'}\chi(y'_{0})y'^{\alpha'}, \ \exists y'_{0} \in \widetilde{B}'_{\varepsilon_{1}}(0),
\end{equation*}
\begin{equation*}
\nabla'\chi(y')=\nabla'^{2}\chi(y'_{1})y'^{T}, \ \exists y'_{1} \in \widetilde{B}'_{\varepsilon_{1}}(0)
\end{equation*}
to obtain that
\begin{equation*}
\begin{split}
|\nabla'\omega(y')|&\leq \frac{n}{2}\left(\frac{|y'|}{\varepsilon_{0}}\right)^{2}\left|\nabla'\psi\left(\frac{y'}{\varepsilon_{0}}\right)\right||\nabla'^{2}\chi(y'_{0})|\varepsilon_{0}+\left(\frac{|y'|}{\varepsilon_{0}}\right)\left|\psi\left(\frac{y'}{\varepsilon_{0}}\right)\right||\nabla'^{2}\chi(y'_{1})|\varepsilon_{0} \\
&\leq \frac{n}{2}\max_{|y'|\leq 2}(|y'|^{2}|\nabla'\psi(y')|)|\nabla'^{2}\chi(y'_{0})|\varepsilon_{0}+\max_{|y'|\leq 2}(|y'||\psi(y')|)|\nabla'^{2}\chi(y'_{1})|\varepsilon_{0} \\
&\leq C\max_{|y'|\leq\varepsilon_{1}}|\nabla'^{2}\chi(y')|\varepsilon_{0},
\end{split}
\end{equation*}
\begin{equation}
\|\nabla'\omega\|_{C_{b}(\mathbb{R}^{n-1})}\leq C\max_{|y'|\leq\varepsilon_{1}}|\nabla'^{2}\chi(y')|\varepsilon_{0},
\end{equation}
where $C$ is a positive constant depending only on $n$.
Set
\begin{equation*}
M_{k}=\sum_{\alpha' \in \mathbb{Z}^{n-1}, \alpha'\geq 0, |\alpha'|=k}\|\partial^{\alpha'}\omega\|_{C_{b}(\mathbb{R}^{n-1})}
\end{equation*}
for any $k=0, 1, 2$.
Then
\begin{equation}
M_{1}\leq C\max_{|y'|\leq\varepsilon_{1}}|\nabla'^{2}\chi(y')|\varepsilon_{0},
\end{equation}
where $C$ is a positive constant depending only on $n$.
Set $(z',z_{n})=(y',y_{n}-\omega(y'))$, $w(z,t)=v(y,t)$, $h_{\delta}(z,t)=g_{\delta}(y,t)$.
Then it follows from (3.49) and the following formulas:
\begin{equation*}
\partial_{y_{i}}=
\begin{cases}
\partial_{z_{i}}-(\partial_{z_{i}}\omega)\partial_{z_{n}} & \mathrm{if} \ i=1, \cdots, n-1, \\
\partial_{z_{n}} & \mathrm{if} \ i=n,
\end{cases}
\end{equation*}
\begin{equation*}
\partial^{2}_{y_{i}}=
\begin{cases}
\partial^{2}_{z_{i}}-(\partial^{2}_{z_{i}}\omega)\partial_{z_{n}}-2(\partial_{z_{i}}\omega)\partial_{z_{i}}\partial_{z_{n}}+(\partial_{z_{i}}\omega)^{2}\partial^{2}_{z_{n}} & \mathrm{if} \ i=1, \cdots, n-1, \\
\partial^{2}_{z_{n}} & \mathrm{if} \ i=n,
\end{cases}
\end{equation*}
\begin{equation*}
-\Delta_{y}=-\Delta_{z}+(\Delta'_{z'}\omega)\partial_{z_{n}}+2\nabla'_{z'}\omega\cdot\nabla'_{z'}\partial_{z_{n}}-|\nabla'_{z'}\omega|^{2}\partial^{2}_{z_{n}}
\end{equation*}
that $w$ satisfies the following problem in $\mathbb{R}^{n}_{+}\times\mathbb{R}_{+}$:
\begin{equation}
\begin{split}
&\partial_{t}w-\kappa\Delta w+w=h_{\delta}+r_{\delta} & \mathrm{in} \ \mathbb{R}^{n}_{+}\times\mathbb{R}_{+}, \\
&w|_{t=0}=0 & \mathrm{in} \ \mathbb{R}^{n}_{+}, \\
&\kappa\partial_{z_{n}}w|_{\partial\mathbb{R}^{n}_{+}}=h_{b,\delta} & \mathrm{on} \ \partial\mathbb{R}^{n}_{+}\times\mathbb{R}_{+},
\end{split}
\end{equation}
where $r_{\delta}=\kappa\{-(\Delta'\omega)\partial_{z_{n}}w-2\nabla'\omega\cdot\nabla'(\partial_{z_{n}}w)+|\nabla'\omega|^{2}\partial^{2}_{z_{n}}w\}$, $h_{b,\delta}=\kappa(\partial_{z_{n}}\varphi)u+\{\kappa(\nabla'\omega\cdot\nabla'\varphi)u\}(1+|\nabla'\omega|^{2})^{-1}+\kappa_{s}w(1+|\nabla'\omega|^{2})^{-1/2}$.
Moreover, it can be easily seen from $f(t)=0$, $u(t)=0$ for any $t<0$ that $h_{\delta}, r_{\delta} \in L_{q,0}(\mathbb{R}_{+};L_{p}(\mathbb{R}^{n}_{+}))$, $h_{b,\delta} \in H^{1,1/2}_{p,q,0}(\mathbb{R}^{n}_{+}\times\mathbb{R}_{+})$.
By applying Lemma 3.2 with $\lambda_{1}=0$ to (3.52), we obtain that
\begin{equation}
\|w\|_{W^{2,1}_{p,q}(\mathbb{R}^{n}_{+}\times\mathbb{R}_{+})}\leq C(\|h_{\delta}\|_{L_{q}(\mathbb{R}_{+};L_{p}(\mathbb{R}^{n}_{+}))}+\|r_{\delta}\|_{L_{q}(\mathbb{R}_{+};L_{p}(\mathbb{R}^{n}_{+}))}+\|h_{b,\delta}\|_{H^{1,1/2}_{p,q,0}(\mathbb{R}^{n}_{+}\times\mathbb{R}_{+})}),
\end{equation}
where $C$ is a positive constant depending only on $n$, $p$, $q$ and $\kappa$.
It is clear from the definition of $h_{\delta}$, $r_{\delta}$ and $h_{b,\delta}$ that
\begin{equation}
\|h_{\delta}\|_{L_{q}(\mathbb{R}_{+};L_{p}(\mathbb{R}^{n}_{+}))}\leq \|f\|_{L_{q}(\mathbb{R}_{+};L_{p}(\Omega))}+C_{x_{0},\delta}\|u\|_{L_{q}(\mathbb{R}_{+};W^{1}_{p}(\Omega))},
\end{equation}
where $C_{x_{0},\delta}$ is a positive constant depending only on $n$, $\Omega$, $p$, $q$, $\kappa$, $x_{0}$ and $\delta$,
\begin{equation}
\|r_{\delta}\|_{L_{q}(\mathbb{R}_{+};L_{p}(\mathbb{R}^{n}_{+}))}\leq C_{x_{0},\delta}M_{2}\|u\|_{L_{q}(\mathbb{R}_{+};W^{1}_{p}(\Omega))}+CM_{1}(1+M_{1})\|w\|_{L_{q}(\mathbb{R}_{+};W^{2}_{p}(\mathbb{R}^{n}_{+}))},
\end{equation}
where $C_{x_{0},\delta}$ is a positive constant depending only on $n$, $\Omega$, $p$, $q$, $\kappa$, $x_{0}$ and $\delta$, $C$ is a positive constant depending only on $n$, $p$, $q$ and $\kappa$,
\begin{equation}
\|h_{b,\delta}\|_{H^{1,1/2}_{p,q,0}(\mathbb{R}^{n}_{+}\times\mathbb{R}_{+})}\leq C_{x_{0},\delta}\{(1+M_{1}+M_{2})\|u\|_{L_{q}(\mathbb{R}_{+};W^{1}_{p}(\Omega))}+(1+M_{1})\|\langle \partial_{t} \rangle^{1/2}u\|_{L_{q}(\mathbb{R}_{+};L_{p}(\Omega))}\},
\end{equation}
where $C_{x_{0},\delta}$ is a positive constant depending only on $n$, $\Omega$, $p$, $q$, $\kappa$, $\kappa_{s}$, $x_{0}$ and $\delta$.
We can utilize (3.53)--(3.56) to obtain that
\begin{equation}
\begin{split}
&\{1-CM_{1}(1+M_{1})\}\|w\|_{W^{2,1}_{p,q}(\mathbb{R}^{n}_{+}\times\mathbb{R}_{+})} \\
&\leq C\|f\|_{L_{q}(\mathbb{R}_{+};L_{p}(\Omega))}+C_{x_{0},\delta}\{(1+M_{1}+M_{2})\|u\|_{L_{q}(\mathbb{R}_{+};W^{1}_{p}(\Omega))}+(1+M_{1})\|\langle \partial_{t} \rangle^{1/2}u\|_{L_{q}(\mathbb{R}_{+};L_{p}(\Omega))}\},
\end{split}
\end{equation}
where $C$ is a positive constant depending only on $n$, $\Omega$, $p$, $q$ and $\kappa$, $C_{x_{0},\delta}$ is a positive constant depending only on $n$, $\Omega$, $p$, $q$, $\kappa$, $\kappa_{s}$, $x_{0}$ and $\delta$.
Let $M_{1}(1+M_{1})<1/C$, which is possible in the case where $\varepsilon_{0}$ is sufficiently small due to (3.51).
Then
\begin{equation}
\|w\|_{W^{2,1}_{p,q}(\mathbb{R}^{n}_{+}\times\mathbb{R}_{+})}\leq C_{x_{0},\delta}(\|f\|_{L_{q}(\mathbb{R}_{+};L_{p}(\Omega))}+\|u\|_{L_{q}(\mathbb{R}_{+};W^{1}_{p}(\Omega))}+\|\langle \partial_{t} \rangle^{1/2}u\|_{L_{q}(\mathbb{R}_{+};L_{p}(\Omega))}),
\end{equation}
where $C_{x_{0},\delta}$ is a positive constant depending only on $n$, $\Omega$, $p$, $q$, $\kappa$, $\kappa_{s}$, $x_{0}$, $\delta$, $M_{1}$ and $M_{2}$.
Since $v(y,t)=w(z,t)$, it is obtained from the change of variables $(z',z_{n})=(y',y_{n}-\omega(y'))$ that
\begin{equation*}
\begin{split}
\|v\|_{W^{2,1}_{p,q}(H^{n}_{\omega}\times\mathbb{R}_{+})}\leq& 2\|w\|_{L_{q}(\mathbb{R}_{+};L_{p}(\mathbb{R}^{n}_{+}))}+C(1+M_{1}+M_{2})\|\nabla w\|_{L_{q}(\mathbb{R}_{+};L_{p}(\mathbb{R}^{n}_{+}))} \\
&+C(1+M_{1}+M^{2}_{1})\|\nabla^{2}w\|_{L_{q}(\mathbb{R}_{+};L_{p}(\mathbb{R}^{n}_{+}))}+\|\partial_{t}w\|_{L_{q}(\mathbb{R}_{+};L_{p}(\mathbb{R}^{n}_{+}))},
\end{split}
\end{equation*}
\begin{equation}
\|v\|_{W^{2,1}_{p,q}(H^{n}_{\omega}\times\mathbb{R}_{+})}\leq C(1+M_{1}+M^{2}_{1}+M_{2})\|w\|_{W^{2,1}_{p,q}(\mathbb{R}^{n}_{+}\times\mathbb{R}_{+})},
\end{equation}
where $C$ is a positive constant depending only on $n$, $p$ and $q$.
It follows from (3.58), (3.59) that
\begin{equation}
\|v\|_{W^{2,1}_{p,q}(H^{n}_{\omega}\times\mathbb{R}_{+})}\leq C_{x_{0},\delta}(\|f\|_{L_{q}(\mathbb{R}_{+};L_{p}(\Omega))}+\|u\|_{L_{q}(\mathbb{R}_{+};W^{1}_{p}(\Omega))}+\|\langle \partial_{t} \rangle^{1/2}u\|_{L_{q}(\mathbb{R}_{+};L_{p}(\Omega))}),
\end{equation}
where $C_{x_{0},\delta}$ is a positive constant depending only on $n$, $\Omega$, $p$, $q$, $\kappa$, $\kappa_{s}$, $x_{0}$, $\delta$, $M_{1}$ and $M_{2}$.
Therefore, (3.60) and $v=\varphi u$ imply that
\begin{equation}
\|u\|_{W^{2,1}_{p,q}((B_{\delta}(x_{0})\cap\Omega)\times\mathbb{R}_{+})}\leq C_{x_{0},\delta}(\|f\|_{L_{q}(\mathbb{R}_{+};L_{p}(\Omega))}+\|u\|_{L_{q}(\mathbb{R}_{+};W^{1}_{p}(\Omega))}+\|\langle \partial_{t} \rangle^{1/2}u\|_{L_{q}(\mathbb{R}_{+};L_{p}(\Omega))}),
\end{equation}
where $C_{x_{0},\delta}$ is a positive constant depending only on $n$, $\Omega$, $p$, $q$, $\kappa$, $\kappa_{s}$, $x_{0}$ and $\delta$.
Set $\Omega^{\delta}=\{x \in \Omega \ ; \ \mathrm{dist}(x,\partial\Omega)<\delta\}$.
Then, since $\partial\Omega$ is compact in $\mathbb{R}^{n}$, it follows from (3.61) that
\begin{equation}
\|u\|_{W^{2,1}_{p,q}(\Omega^{\delta_{0}}\times\mathbb{R}_{+})}\leq C(\|f\|_{L_{q}(\mathbb{R}_{+};L_{p}(\Omega))}+\|u\|_{L_{q}(\mathbb{R}_{+};W^{1}_{p}(\Omega))}+\|\langle \partial_{t} \rangle^{1/2}u\|_{L_{q}(\mathbb{R}_{+};L_{p}(\Omega))}),
\end{equation}
where $C$ is a positive constant depending only on $n$, $\Omega$, $p$, $q$, $\kappa_{s}$ and $\kappa$.
It is obvious from (3.46) with $\delta=\delta_{0}/2$ and (3.62) that
\begin{equation}
\|u\|_{W^{2,1}_{p,q}(\Omega\times\mathbb{R}_{+})}\leq C(\|f\|_{L_{q}(\mathbb{R}_{+};L_{p}(\Omega))}+\|u\|_{L_{q}(\mathbb{R}_{+};W^{1}_{p}(\Omega))}+\|\langle \partial_{t} \rangle^{1/2}u\|_{L_{q}(\mathbb{R}_{+};L_{p}(\Omega))}),
\end{equation}
where $C$ is a positive constant depending only on $n$, $\Omega$, $p$, $q$, $\kappa$ and $\kappa_{s}$.

It follows from (3.63) and Lemma 2.7 with $s=1/2$ that
\begin{equation*}
\|u\|_{W^{2,1}_{p,q}(\Omega\times\mathbb{R}_{+})}\leq C\{\|f\|_{L_{q}(\mathbb{R}_{+};L_{p}(\Omega))}+(1+r^{-1/2})\|u\|_{L_{q}(\mathbb{R}_{+};W^{1}_{p}(\Omega))}+r^{1/2}\|\partial_{t}u\|_{L_{q}(\mathbb{R}_{+};L_{p}(\Omega))}\},
\end{equation*}
\begin{equation}
(1-Cr^{1/2})\|u\|_{W^{2,1}_{p,q}(\Omega\times\mathbb{R}_{+})}\leq C\{\|f\|_{L_{q}(\mathbb{R}_{+};L_{p}(\Omega))}+(1+r^{-1/2})\|u\|_{L_{q}(\mathbb{R}_{+};W^{1}_{p}(\Omega))}\},
\end{equation}
where $C$ is a positive constant depending only on $n$, $\Omega$, $p$, $q$, $\kappa$ and $\kappa_{s}$.
Let $r<(1/C)^{2}$.
Then it is clear that (3.43) is established by (3.64).
\end{proof}
Second, we discuss (1.1) with a positive constant $\kappa$ and $u_{0}=0$, that is, the following initial-boundary value problem in $\Omega\times\mathbb{R}_{+}$:
\begin{equation}
\begin{split}
&\partial_{t}u-\kappa\Delta u=f & \mathrm{in} \ \Omega\times\mathbb{R}_{+}, \\
&u|_{t=0}=0 & \mathrm{in} \ \Omega, \\
&\kappa\partial_{\nu}u+\kappa_{s}u|_{\partial\Omega}=g & \mathrm{on} \ \partial\Omega\times\mathbb{R}_{+}.
\end{split}
\end{equation}
Theorem 3.1 and Lemma 3.3 allow us to obtain the following lemma:
\begin{lemma}
Let $\kappa$ be a positive constant, $\kappa_{s} \in C^{1}(\partial\Omega)$ satisfy $\kappa_{s}>0$ on $\partial\Omega$, $1<p<\infty$, $1<q<\infty$, $f \in L_{q,0}(\mathbb{R}_{+};L_{p}(\Omega))$, $g \in H^{1,1/2}_{p,q,0}(\Omega\times\mathbb{R}_{+})$.
Then $(3.65)$ has uniquely a solution $u \in W^{2,1}_{p,q,0}(\Omega\times\mathbb{R}_{+})$ satisfying
\begin{equation}
\|u\|_{W^{2,1}_{p,q}(\Omega\times\mathbb{R}_{+})}\leq C(\|f\|_{L_{q}(\mathbb{R}_{+};L_{p}(\Omega))}+\|g\|_{H^{1,1/2}_{p,q,0}(\Omega\times\mathbb{R}_{+})}),
\end{equation}
where $C$ is a positive constant depending only on $n$, $\Omega$, $p$, $q$, $\kappa$ and $\kappa_{s}$.
\end{lemma}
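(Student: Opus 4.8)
The plan is to reduce $(3.65)$ to the two situations already settled in the excerpt: the zero–boundary–data parabolic problem $(3.40)$ treated in Lemma 3.3, and the flat half–space problem with inhomogeneous Neumann datum treated in Lemma 3.2 (with $\lambda_{1}=0$). First I would extend Lemma 3.3 from $f\in C^{\infty}_{0}(\mathbb{R}_{+};L_{p}(\Omega))$ to $f\in L_{q,0}(\mathbb{R}_{+};L_{p}(\Omega))$ by density of $C^{\infty}_{0}(\mathbb{R}_{+};L_{p}(\Omega))$ in $L_{q,0}(\mathbb{R}_{+};L_{p}(\Omega))$ and the boundedness of the solution operator, exactly as in the proof of Lemma 3.1. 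Writing the sought solution as $u=u_{1}+u_{2}$, where $u_{1}$ solves $(3.40)$ with source $f$ and is supplied by the extended Lemma 3.3 with $\|u_{1}\|_{W^{2,1}_{p,q}(\Omega\times\mathbb{R}_{+})}\le C\|f\|_{L_{q}(\mathbb{R}_{+};L_{p}(\Omega))}$, it remains to construct and estimate $u_{2}:=u-u_{1}$, which must solve the pure boundary–data problem $\partial_{t}u_{2}-\kappa\Delta u_{2}=0$ in $\Omega\times\mathbb{R}_{+}$, $u_{2}|_{t=0}=0$ in $\Omega$, $\kappa\partial_{\nu}u_{2}+\kappa_{s}u_{2}|_{\partial\Omega}=g$ on $\partial\Omega\times\mathbb{R}_{+}$, together with $\|u_{2}\|_{W^{2,1}_{p,q}(\Omega\times\mathbb{R}_{+})}\le C\|g\|_{H^{1,1/2}_{p,q,0}(\Omega\times\mathbb{R}_{+})}$. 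Existence of $u_{2}$ then follows from this a priori bound by the standard continuity/approximation argument (approximating $g$ by smooth data), and uniqueness of $u$ is inherited from Theorem 3.2 applied with $u_{0}=0$ to the difference of two solutions.

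For $u_{2}$ I would argue by localization, following the boundary part of the proof of Lemma 3.3 almost verbatim but now carrying $g$ on the right–hand side. Fix a finite cover of $\partial\Omega$ by balls $B_{\delta}(x_{j})$, a subordinate partition of unity, and an interior cut-off $\varphi_{0}$. In an interior chart, $\varphi_{0}u_{2}$ solves a whole–space heat equation $\partial_{t}(\varphi_{0}u_{2})-\kappa\Delta(\varphi_{0}u_{2})+(\varphi_{0}u_{2})=F$ whose source $F$ consists only of the commutator terms $-\kappa(\Delta\varphi_{0})u_{2}-2\kappa\nabla\varphi_{0}\cdot\nabla u_{2}+(\varphi_{0}u_{2})$, so Lemma 3.1 with $\lambda_{1}=0$ applies directly. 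In a boundary chart, after the rotation and the flattening $z=(y',y_{n}-\omega(y'))$ of $(3.49)$–$(3.52)$, the function $\varphi_{j}u_{2}$ becomes a solution $w_{j}$ of a perturbed half-space problem $\partial_{t}w_{j}-\kappa\Delta w_{j}+w_{j}=r_{j}$ in $\mathbb{R}^{n}_{+}\times\mathbb{R}_{+}$, $\kappa\partial_{z_{n}}w_{j}|_{\partial\mathbb{R}^{n}_{+}}=h_{b,j}$, where $r_{j}$ collects the curvature terms $\kappa\{-(\Delta'\omega)\partial_{z_{n}}w_{j}-2\nabla'\omega\cdot\nabla'(\partial_{z_{n}}w_{j})+|\nabla'\omega|^{2}\partial^{2}_{z_{n}}w_{j}\}$ together with the zeroth–order term $\varphi_{j}u_{2}$, and $h_{b,j}$ is the transported Robin datum. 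The essential new feature is that $h_{b,j}$ now contains a genuine copy of $g$, with $\|h_{b,j}\|_{H^{1,1/2}_{p,q,0}(\mathbb{R}^{n}_{+}\times\mathbb{R}_{+})}\le C_{x_{j},\delta}\|g\|_{H^{1,1/2}_{p,q,0}(\Omega\times\mathbb{R}_{+})}$ plus the familiar commutator and $\kappa_{s}$ contributions $\kappa(\partial_{z_{n}}\varphi_{j})u_{2}$, $\kappa(\nabla'\omega\cdot\nabla'\varphi_{j})u_{2}(1+|\nabla'\omega|^{2})^{-1}$, $\kappa_{s}w_{j}(1+|\nabla'\omega|^{2})^{-1/2}$. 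Lemma 3.2 with $\lambda_{1}=0$ then bounds $\|w_{j}\|_{W^{2,1}_{p,q}(\mathbb{R}^{n}_{+}\times\mathbb{R}_{+})}$ by $\|r_{j}\|_{L_{q}(\mathbb{R}_{+};L_{p}(\mathbb{R}^{n}_{+}))}+\|h_{b,j}\|_{H^{1,1/2}_{p,q,0}(\mathbb{R}^{n}_{+}\times\mathbb{R}_{+})}$; transporting back via the change of variables (which costs the factors $1+M_{1}+M_{1}^{2}+M_{2}$, exactly as in $(3.59)$) and summing over $j$ yields $\|u_{2}\|_{W^{2,1}_{p,q}(\Omega\times\mathbb{R}_{+})}$ controlled by $\|g\|_{H^{1,1/2}_{p,q,0}(\Omega\times\mathbb{R}_{+})}$ plus error terms.

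The main obstacle is precisely the absorption of these error terms, and it is dispatched as in Lemma 3.3. The principal curvature contribution has the form $CM_{1}(1+M_{1})\|w_{j}\|_{W^{2,1}_{p,q}(\mathbb{R}^{n}_{+}\times\mathbb{R}_{+})}$ with $M_{1}\le C\max_{|y'|\le\varepsilon_{1}}|\nabla'^{2}\chi(y')|\varepsilon_{0}$ as in $(3.51)$, so choosing the chart radius $\varepsilon_{0}$ small enough that $CM_{1}(1+M_{1})<1$ moves it to the left-hand side, exactly as in $(3.57)$–$(3.58)$. The remaining terms — the commutators, the $\kappa_{s}$ term, the zeroth-order term, and the change-of-variables factors — are all dominated by $\|u_{2}\|_{L_{q}(\mathbb{R}_{+};W^{1}_{p}(\Omega))}$ and $\|\langle d_{t}\rangle^{1/2}u_{2}\|_{L_{q}(\mathbb{R}_{+};L_{p}(\Omega))}$, which are of lower order than $\|u_{2}\|_{W^{2,1}_{p,q}(\Omega\times\mathbb{R}_{+})}$; the second is absorbed via Lemma 2.7 with $s=1/2$ (after which a small multiple of $\|\partial_{t}u_{2}\|_{L_{q}(\mathbb{R}_{+};L_{p}(\Omega))}$ is left on the right), and the first via the Ehrling–type inequality $\|u_{2}\|_{W^{1}_{p}(\Omega)}\le\varepsilon\|u_{2}\|_{W^{2}_{p}(\Omega)}+C_{\varepsilon}\|u_{2}\|_{L_{p}(\Omega)}$. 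To close the loop one needs the elementary preliminary bound $\|u_{2}\|_{L_{q}(\mathbb{R}_{+};W^{1}_{p}(\Omega))}+\|u_{2}\|_{L_{q}(\mathbb{R}_{+};L_{p}(\Omega))}\le C\|g\|_{H^{1,1/2}_{p,q,0}(\Omega\times\mathbb{R}_{+})}$; this is the point where Theorem 3.1 enters, through the stationary Robin lift $N_{0}\colon g(\cdot)\mapsto N_{0}g(\cdot)$ (Theorem 3.1 with $\lambda=0$, $f=0$), a Duhamel representation of $u_{2}$ in terms of $N_{0}g$, and the smoothing estimates $(2.3)$ of Lemma 2.1 — carried out exactly in the manner of $(3.42)$. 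Assembling the interior and boundary estimates, performing the absorptions, and adding the bound for $u_{1}$ from Lemma 3.3 gives $(3.66)$; finally $u\in W^{2,1}_{p,q,0}(\Omega\times\mathbb{R}_{+})$ since $u_{1}$, $u_{2}$ and hence $u$ vanish for $t<0$.
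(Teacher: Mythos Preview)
Your overall strategy---split $u=u_{1}+u_{2}$, handle $u_{1}$ by Lemma~3.3 (extended by density), and treat $u_{2}$ by re-running the localization/flattening machinery of Lemma~3.3 with $g$ carried along---is a different route from the paper's, and it is sound up to the final absorption step. The paper instead writes $u=v+w$ where $v$ solves the \emph{stationary} Robin problem $-\kappa\Delta v=f$, $\kappa\partial_{\nu}v+\kappa_{s}v=g$ at each fixed time (Theorem~3.1) and $w$ solves the homogeneous-boundary problem $(3.40)$ with source $-\partial_{t}v$ (Lemma~3.3). Crucially, the estimate $(3.66)$ is \emph{not} obtained by adding bounds for $v$ and $w$; it comes from a separate duality argument: test $u$ and $\partial_{t}u$ against $\varphi\in C^{\infty}_{0}(\Omega\times\mathbb{R}_{+})$, solve the time-reversed adjoint problem for $\Phi$ using the $W^{2,1}_{p^{*},q^{*}}$ maximal regularity already supplied by Lemma~3.3, integrate by parts (using Lemma~2.8 for the half-derivative pairing), and read off $\|u\|_{L_{q}(L_{p})}$ and $\|\partial_{t}u\|_{L_{q}(L_{p})}$ directly in terms of $\|f\|_{L_{q}(L_{p})}$ and $\|g\|_{H^{1,1/2}_{p,q,0}}$. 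One further application of Theorem~3.1 to $-\kappa\Delta u=f-\partial_{t}u$ then gives the $L_{q}(W^{2}_{p})$ bound.

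The genuine gap in your proposal is the ``elementary preliminary bound'' $\|u_{2}\|_{L_{q}(\mathbb{R}_{+};W^{1}_{p}(\Omega))}\le C\|g\|_{H^{1,1/2}_{p,q,0}}$, which you assert follows ``exactly in the manner of $(3.42)$''. It does not: the argument behind $(3.42)$ rests on the Duhamel formula $u(t)=\int_{0}^{t}e^{-(t-s)A_{p}}f(s)\,ds$, available only because the boundary condition is homogeneous. For $u_{2}$, lifting by $v=N_{0}g$ turns the source into $-\partial_{t}N_{0}g$, which is not in $L_{q}(L_{p})$ under the sole hypothesis $g\in H^{1,1/2}_{p,q,0}$; integrating by parts in $s$ instead produces $u_{2}(t)=\int_{0}^{t}A_{p}e^{-(t-s)A_{p}}N_{0}g(s)\,ds$, and since $N_{0}g(s)\notin\mathcal{D}(A_{p})$ the available pointwise bound $\|A_{p}e^{-\tau A_{p}}N_{0}g(s)\|_{L_{p}}\le C\tau^{-1}e^{-\lambda_{1}\tau}\|N_{0}g(s)\|_{L_{p}}$ has a non-integrable kernel at $\tau=0$. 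Salvaging this would require knowing $N_{0}g(s)\in\mathcal{D}(A_{p}^{1/2})$ with $\|A_{p}^{1/2}N_{0}g(s)\|_{L_{p}}\le C\|g(s)\|_{W^{1}_{p}}$---essentially the identification $\mathcal{D}(A_{p}^{1/2})=W^{1}_{p}(\Omega)$---which is plausible but is not among the paper's stated preliminaries (Lemma~2.2 gives only the forward embedding). The duality argument is precisely the device that circumvents this difficulty, and it is the idea your proof is missing; once $\|u\|_{L_{q}(L_{p})}$ and $\|\partial_{t}u\|_{L_{q}(L_{p})}$ are in hand by duality, the localization step becomes unnecessary.
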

\begin{proof}
Since $C^{\infty}_{0}(\mathbb{R}_{+};X)$ and $C^{\infty}_{0}(\mathbb{R}_{+};W^{1}_{p}(\Omega))$ are dense in $L_{q,0}(\mathbb{R}_{+};X)$ for any Banach space $(X,\|\cdot\|_{X})$ and in $H^{1,1/2}_{p,q,0}(\Omega\times\mathbb{R}_{+})$ respectively, we can assume that $f \in C^{\infty}_{0}(\mathbb{R}_{+};L_{p}(\Omega))$, $g \in C^{\infty}_{0}(\mathbb{R}_{+};W^{1}_{p}(\Omega))$.
It follows from Theorem 3.1 that
\begin{equation*}
\begin{split}
&-\kappa\Delta v=f & \mathrm{in} \ \Omega\times\mathbb{R}_{+}, \\
&\kappa\partial_{\nu}v+\kappa_{s}v|_{\partial\Omega}=g & \mathrm{on} \ \partial\Omega\times\mathbb{R}_{+}
\end{split}
\end{equation*}
has uniquely a solution $v \in C^{\infty}_{0}(\mathbb{R}_{+};W^{2}_{p}(\Omega))$.
Moreover, it is obtained from Lemma 3.3 that
\begin{equation*}
\begin{split}
&\partial_{t}w-\kappa\Delta w=-\partial_{t}v & \mathrm{in} \ \Omega\times\mathbb{R}_{+}, \\
&w|_{t=0}=0 & \mathrm{in} \ \Omega, \\
&\kappa\partial_{\nu}w+\kappa_{s}w|_{\partial\Omega}=0 & \mathrm{on} \ \partial\Omega\times\mathbb{R}_{+}
\end{split}
\end{equation*}
has uniquely a solution $w \in W^{2,1}_{p,q,0}(\Omega\times\mathbb{R}_{+})$.
Therefore, it is sufficient for Lemma 3.4 to be proved that $u=v+w$ satisfies (3.65).
Let $\varphi \in C^{\infty}_{0}(\Omega\times\mathbb{R}_{+})$, $1<p^{*}<\infty$ and $1<q^{*}<\infty$ be dual exponents to $p$ and $q$ respectively.
Then there exists a positive constant $T$ such that $\varphi(t)=0$ for any $t>T/2$.
Set $\psi(x,t)=-\varphi(x,T-t)$.
Then it follows from Lemma 3.3 that
\begin{equation}
\begin{split}
&\partial_{t}\Psi-\kappa\Delta \Psi=\psi & \mathrm{in} \ \Omega\times\mathbb{R}_{+}, \\
&\Psi|_{t=0}=0 & \mathrm{in} \ \Omega, \\
&\kappa\partial_{\nu}\Psi+\kappa_{s}\Psi|_{\partial\Omega}=0 & \mathrm{on} \ \partial\Omega\times\mathbb{R}_{+}
\end{split}
\end{equation}
has uniquely a solution $\Psi \in W^{2,1}_{p^{*},q^{*},0}(\Omega\times\mathbb{R}_{+})$ satisfying
\begin{equation}
\|\Psi\|_{W^{2,1}_{p^{*},q^{*}}(\Omega\times\mathbb{R}_{+})}\leq C\|\psi\|_{L_{q^{*}}(\mathbb{R}_{+};L_{p^{*}}(\Omega))},
\end{equation}
where $C$ is a positive constant depending only on $n$, $\Omega$, $p$, $q$, $\kappa$ and $\kappa_{s}$.
Set $\Phi(x,t)=\Psi(x,T-t)$.
Then $\Phi \in W^{2,1}_{p^{*},q^{*},0}(\Omega\times\mathbb{R}_{+})$ is a unique solution to the following problem in $\Omega\times\mathbb{R}_{+}$:
\begin{equation}
\begin{split}
&\partial_{t}\Phi+\kappa\Delta \Phi=\varphi & \mathrm{in} \ \Omega\times\mathbb{R}_{+}, \\
&\Phi|_{t=0}=0 & \mathrm{in} \ \Omega, \\
&\kappa\partial_{\nu}\Phi+\kappa_{s}\Phi|_{\partial\Omega}=0 & \mathrm{on} \ \partial\Omega\times\mathbb{R}_{+}
\end{split}
\end{equation}
satisfying
\begin{equation}
\|\Phi\|_{W^{2,1}_{p^{*},q^{*}}(\Omega\times\mathbb{R}_{+})}\leq C\|\varphi\|_{L_{q^{*}}(\mathbb{R}_{+};L_{p^{*}}(\Omega))},
\end{equation}
where $C$ is a positive constant depending only on $n$, $\Omega$, $p$, $q$, $\kappa$ and $\kappa_{s}$.
Set
\begin{equation*}
(u,v)_{\Omega\times\mathbb{R}_{+}}=\int_{\Omega\times\mathbb{R}_{+}}u(x,t)v(x,t)dxdt,
\end{equation*}
\begin{equation*}
(u,v)_{\partial\Omega\times\mathbb{R}_{+}}=\int_{\partial\Omega\times\mathbb{R}_{+}}u(x,t)v(x,t)ds_{x}dt,
\end{equation*}
and let $\tilde{\nu} \in C^{2}(\overline{\Omega})$ be the extension of $\nu$ to $\overline{\Omega}$.
Then it is derived from integration by parts and (3.69) that
\begin{equation*}
\begin{split}
(u,\varphi)_{\Omega\times\mathbb{R}_{+}}&=(u,\partial_{t}\Phi+\kappa\Delta \Phi)_{\Omega\times\mathbb{R}_{+}} \\
&=-(\partial_{t}u,\Phi)_{\Omega\times\mathbb{R}_{+}}+(\kappa\Delta u,\Phi)_{\Omega\times\mathbb{R}_{+}}+(u,\kappa\partial_{\nu}\Phi)_{\partial\Omega\times\mathbb{R}_{+}}-(\kappa\partial_{\nu}u,\Phi)_{\partial\Omega\times\mathbb{R}_{+}} \\
&=(-\partial_{t}u+\kappa\Delta u,\Phi)_{\Omega\times\mathbb{R}_{+}}-(\kappa_{s}u+\kappa\partial_{\nu}u,\Phi)_{\partial\Omega\times\mathbb{R}_{+}} \\
&=-(f,\Phi)_{\Omega\times\mathbb{R}_{+}}-(g,\Phi)_{\partial\Omega\times\mathbb{R}_{+}},
\end{split}
\end{equation*}
\begin{equation*}
(u,\varphi)_{\Omega\times\mathbb{R}_{+}}=-(f,\Phi)_{\Omega\times\mathbb{R}_{+}}-\sum^{n}_{i=1}(\partial_{x_{i}}(\tilde{\nu}_{i}g),\Phi)_{\Omega\times\mathbb{R}_{+}}-\sum^{n}_{i=1}(\tilde{\nu}_{i}g,\partial_{x_{i}}\Phi)_{\Omega\times\mathbb{R}_{+}}.
\end{equation*}
Moreover, it follows from (3.70) that
\begin{equation*}
\begin{split}
|(u,\varphi)_{\Omega\times\mathbb{R}_{+}}|&\leq C(\|f\|_{L_{q}(\mathbb{R}_{+};L_{p}(\Omega))}+\|g\|_{L_{q}(\mathbb{R}_{+};W^{1}_{p}(\Omega))})\|\Phi\|_{L_{q^{*}}(\mathbb{R}_{+};W^{1}_{p^{*}}(\Omega))} \\
&\leq C(\|f\|_{L_{q}(\mathbb{R}_{+};L_{p}(\Omega))}+\|g\|_{L_{q}(\mathbb{R}_{+};W^{1}_{p}(\Omega))})\|\varphi\|_{L_{q^{*}}(\mathbb{R}_{+};L_{p^{*}}(\Omega))},
\end{split}
\end{equation*}
\begin{equation}
\|u\|_{ L_{q}(\mathbb{R}_{+};L_{p}(\Omega))}\leq C(\|f\|_{L_{q}(\mathbb{R}_{+};L_{p}(\Omega))}+\|g\|_{L_{q}(\mathbb{R}_{+};W^{1}_{p}(\Omega))}),
\end{equation}
where $C$ is a positive constant depending only on $n$, $\Omega$, $p$, $q$, $\kappa$ and $\kappa_{s}$.
Similarly to $(u,\varphi)_{\Omega\times\mathbb{R}_{+}}$, we obtain that
\begin{equation*}
(\partial_{t}u,\varphi)_{\Omega\times\mathbb{R}_{+}}=(f,\partial_{t}\Phi)_{\Omega\times\mathbb{R}_{+}}+\sum^{n}_{i=1}(\partial_{x_{i}}(\tilde{\nu}_{i}g),\partial_{t}\Phi)_{\Omega\times\mathbb{R}_{+}}-\sum^{n}_{i=1}(\tilde{\nu}_{i}\partial_{t}g,\partial_{x_{i}}\Phi)_{\Omega\times\mathbb{R}_{+}},
\end{equation*}
\begin{equation}
|(\partial_{t}u,\varphi)_{\Omega\times\mathbb{R}_{+}}|\leq |(f,\partial_{t}\Phi)_{\Omega\times\mathbb{R}_{+}}|+\sum^{n}_{i=1}|(\partial_{x_{i}}(\tilde{\nu}_{i}g),\partial_{t}\Phi)_{\Omega\times\mathbb{R}_{+}}|+\sum^{n}_{i=1}|(\tilde{\nu}_{i}\partial_{t}g,\partial_{x_{i}}\Phi)_{\Omega\times\mathbb{R}_{+}}|.
\end{equation}
It follows from the Parseval identity that
\begin{equation}
|(f,\partial_{t}\Phi)_{\Omega\times\mathbb{R}_{+}}|\leq \|f\|_{L_{q}(\mathbb{R}_{+};L_{p}(\Omega))}\|\partial_{t}\Phi\|_{L_{q^{*}}(\mathbb{R}_{+};L_{p^{*}}(\Omega))},
\end{equation}
\begin{equation}
|(\partial_{x_{i}}(\tilde{\nu}_{i}g),\partial_{t}\Phi)_{\Omega\times\mathbb{R}_{+}}|\leq C\|g\|_{L_{q}(\mathbb{R}_{+};W^{1}_{p}(\Omega))}\|\partial_{t}\Phi\|_{L_{q^{*}}(\mathbb{R}_{+};L_{p^{*}}(\Omega))}
\end{equation}
for any $i=1, \cdots, n$, where $C$ is a positive constant depending only on $\Omega$,
\begin{equation}
|(\tilde{\nu}_{i}\partial_{t}g,\partial_{x_{i}}\Phi)_{\Omega\times\mathbb{R}_{+}}|\leq \|\langle \partial_{t} \rangle^{1/2}g\|_{L_{q}(\mathbb{R}_{+};L_{p}(\Omega))}\|\langle \partial_{t} \rangle^{1/2}\partial_{x_{i}}\Phi\|_{L_{q^{*}}(\mathbb{R}_{+};L_{p^{*}}(\Omega))}.
\end{equation}
It is clear from (3.72)--(3.75) and Lemma 2.8 that
\begin{equation*}
\begin{split}
|(\partial_{t}u,\varphi)_{\Omega\times\mathbb{R}_{+}}|&\leq C(\|f\|_{L_{q}(\mathbb{R}_{+};L_{p}(\Omega))}+\|g\|_{H^{1,1/2}_{p,q,0}(\Omega\times\mathbb{R}_{+})})\|\Phi\|_{W^{2,1}_{p^{*},q^{*}}(\Omega\times\mathbb{R}_{+})} \\
&\leq C(\|f\|_{L_{q}(\mathbb{R}_{+};L_{p}(\Omega))}+\|g\|_{H^{1,1/2}_{p,q,0}(\Omega\times\mathbb{R}_{+})})\|\varphi\|_{L_{q^{*}}(\mathbb{R}_{+};L_{p^{*}}(\Omega))},
\end{split}
\end{equation*}
\begin{equation}
\|\partial_{t}u\|_{ L_{q}(\mathbb{R}_{+};L_{p}(\Omega))}\leq C(\|f\|_{L_{q}(\mathbb{R}_{+};L_{p}(\Omega))}+\|g\|_{H^{1,1/2}_{p,q,0}(\Omega\times\mathbb{R}_{+})}),
\end{equation}
where $C$ is a positive constant depending only on $n$, $\Omega$, $p$, $q$, $\kappa$ and $\kappa_{s}$.
By applying Theorem 3.1 to the following problem in $\Omega\times\mathbb{R}_{+}$:
\begin{equation*}
\begin{split}
&-\kappa\Delta u=f-\partial_{t}u & \mathrm{in} \ \Omega\times\mathbb{R}_{+}, \\
&\kappa\partial_{\nu}u+\kappa_{s}u|_{\partial\Omega}=g & \mathrm{on} \ \partial\Omega\times\mathbb{R}_{+},
\end{split}
\end{equation*}
we obtain from (3.71), (3.76) that
\begin{equation*}
\|u\|_{L_{q}(\mathbb{R}_{+};W^{2}_{p}(\Omega))}\leq C(\|f\|_{L_{q}(\mathbb{R}_{+};L_{p}(\Omega))}+\|\partial_{t}u\|_{L_{q}(\mathbb{R}_{+};L_{p}(\Omega))}+\|g\|_{L_{q}(\mathbb{R}_{+};W^{1}_{p}(\Omega))}),
\end{equation*}
\begin{equation}
\|u\|_{L_{q}(\mathbb{R}_{+};W^{2}_{p}(\Omega))}\leq C(\|f\|_{L_{q}(\mathbb{R}_{+};L_{p}(\Omega))}+\|g\|_{H^{1,1/2}_{p,q,0}(\Omega\times\mathbb{R}_{+})}),
\end{equation}
where $C$ is a positive constant depending only on $n$, $\Omega$, $p$, $\kappa$ and $\kappa_{s}$.
It is obvious from (3.71), (3.76), (3.77) that $u$ satisfies (3.66).
\end{proof}
Third, we consider (1.1) with $u_{0}=0$ and $g=0$, that is, the following initial-boundary value problem in $\Omega\times\mathbb{R}_{+}$:
\begin{equation}
\begin{split}
&\partial_{t}u-\mathrm{div}(\kappa\nabla u)=f & \mathrm{in} \ \Omega\times\mathbb{R}_{+}, \\
&u|_{t=0}=0 & \mathrm{in} \ \Omega, \\
&\kappa\partial_{\nu}u+\kappa_{s}u|_{\partial\Omega}=0 & \mathrm{on} \ \partial\Omega\times\mathbb{R}_{+}.
\end{split}
\end{equation}
Lemma 3.4 and freezing coefficients play an important role in proving the following lemma:
\begin{lemma}
Let $\kappa \in C^{1}(\overline{\Omega})$ satisfy $\kappa>0$ on $\overline{\Omega}$, $\kappa_{s} \in C^{1}(\partial\Omega)$ satisfy $\kappa_{s}>0$ on $\partial\Omega$, $1<p<\infty$, $1<q<\infty$, $f \in C^{\infty}_{0}(\mathbb{R}_{+};L_{p}(\Omega))$.
Then $(3.78)$ has uniquely a solution $u \in W^{2,1}_{p,q,0}(\Omega\times\mathbb{R}_{+})$ satisfying
\begin{equation*}
u(t)=\int^{t}_{0}e^{-(t-s)A_{p}}f(s)ds,
\end{equation*}
\begin{equation}
\|u\|_{W^{2,1}_{p,q}(\Omega\times\mathbb{R}_{+})}\leq C\|f\|_{L_{q}(\mathbb{R}_{+};L_{p}(\Omega))},
\end{equation}
where $C$ is a positive constant depending only on $n$, $\Omega$, $p$, $q$, $\kappa$ and $\kappa_{s}$.
\end{lemma}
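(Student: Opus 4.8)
The plan is to deduce Lemma 3.5 from the constant-coefficient result Lemma 3.4 by a partition of unity combined with a freezing of the coefficient $\kappa$, adapting the localization argument used for Lemma 3.3. Since $f \in C^{\infty}_{0}(\mathbb{R}_{+};L_{p}(\Omega))$ has support in some $[a,b]\subset(0,\infty)$, the theory of analytic semigroups shows that $(3.78)$ has the solution $u(t)=\int^{t}_{0}e^{-(t-s)A_{p}}f(s)\,ds$, which is unique in $W^{2,1}_{p,q,0}(\Omega\times\mathbb{R}_{+})$; this $u$ vanishes for $t<a$, is $C^{1}$ in $t$ with values in $\mathcal{D}(A_{p})$, and equals $e^{-(t-b)A_{p}}u(b)$ with $u(b)\in\mathcal{D}(A_{p})$ for $t\geq b$, so by Lemma 2.1 it indeed belongs to $W^{2,1}_{p,q,0}(\Omega\times\mathbb{R}_{+})$ and it remains to prove $(3.79)$. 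As in the proof of Lemma 3.3 I would first bound the lower-order part: Lemma 2.1 with $\alpha=1/2$ and $0<\lambda_{1}<\Lambda_{1}$, together with the embedding $X^{1/2}_{p}(\Omega)\hookrightarrow W^{1}_{p}(\Omega)$ of Lemma 2.2, gives $\|u(t)\|_{W^{1}_{p}(\Omega)}\leq C\int^{t}_{0}(t-s)^{-1/2}e^{-\lambda_{1}(t-s)}\|f(s)\|_{L_{p}(\Omega)}\,ds$, whence $\|u\|_{L_{q}(\mathbb{R}_{+};W^{1}_{p}(\Omega))}\leq C\|f\|_{L_{q}(\mathbb{R}_{+};L_{p}(\Omega))}$ by H\"{o}lder's inequality, exactly as in $(3.42)$.

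Next I would localize. Cover $\overline{\Omega}$ by finitely many balls $B_{\delta}(x_{j})$ $(j=1,\cdots,N)$ of small radius $\delta$ and fix a subordinate partition of unity $\{\varphi_{j}\}$ with $\sum_{j}\varphi_{j}=1$ on $\overline{\Omega}$. Writing $\kappa_{j}=\kappa(x_{j})$ and using $\mathrm{div}(\kappa\nabla v)=\kappa\Delta v+\nabla\kappa\cdot\nabla v$, a routine computation shows that $\varphi_{j}u$ solves on $\Omega\times\mathbb{R}_{+}$ the constant-coefficient problem $(3.65)$ with coefficient $\kappa_{j}$, right-hand side
\begin{equation*}
F_{j}=\varphi_{j}f-u\,\mathrm{div}(\kappa\nabla\varphi_{j})-2\kappa\nabla\varphi_{j}\cdot\nabla u+\nabla\kappa\cdot\nabla(\varphi_{j}u)+(\kappa-\kappa_{j})\Delta(\varphi_{j}u),
\end{equation*}
and boundary datum, represented as a function on $\Omega\times\mathbb{R}_{+}$ through an extension $\tilde{\nu}\in C^{2}(\overline{\Omega})$ of $\nu$,
\begin{equation*}
G_{j}=\kappa(\tilde{\nu}\cdot\nabla\varphi_{j})u+(\kappa_{j}-\kappa)\bigl(\tilde{\nu}\cdot\nabla(\varphi_{j}u)\bigr).
\end{equation*}
Since $u\in W^{2,1}_{p,q,0}(\Omega\times\mathbb{R}_{+})$ and, by Lemma 2.8, $u\in H^{1/2}_{q}(\mathbb{R}_{+};W^{1}_{p}(\Omega))$, one has $F_{j}\in L_{q,0}(\mathbb{R}_{+};L_{p}(\Omega))$ and $G_{j}\in H^{1,1/2}_{p,q,0}(\Omega\times\mathbb{R}_{+})$, so Lemma 3.4 applies to $\varphi_{j}u$ and yields
\begin{equation*}
\|\varphi_{j}u\|_{W^{2,1}_{p,q}(\Omega\times\mathbb{R}_{+})}\leq C\bigl(\|F_{j}\|_{L_{q}(\mathbb{R}_{+};L_{p}(\Omega))}+\|G_{j}\|_{H^{1,1/2}_{p,q,0}(\Omega\times\mathbb{R}_{+})}\bigr).
\end{equation*}

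Now I would estimate $F_{j}$ and $G_{j}$, using that $\langle\partial_{t}\rangle^{1/2}$ commutes with multiplication by the $t$-independent functions $\kappa-\kappa_{j}$ and $\tilde{\nu}$, applying Lemma 2.8 to the terms of the form $\langle\partial_{t}\rangle^{1/2}\nabla(\varphi_{j}u)$, and noting that $\nabla\kappa$ is bounded because $\kappa\in C^{1}(\overline{\Omega})$. Each term is then of one of three types: a multiple of $\|f\|_{L_{q}(\mathbb{R}_{+};L_{p}(\Omega))}$; a term bounded by $C\theta_{\delta}\|\varphi_{j}u\|_{W^{2,1}_{p,q}(\Omega\times\mathbb{R}_{+})}$, where $\theta_{\delta}$ is the oscillation of $\kappa$ over balls of radius $\delta$ and hence $\theta_{\delta}\to 0$ as $\delta\to 0^{+}$ (such terms arise from $(\kappa-\kappa_{j})\Delta(\varphi_{j}u)$ in $F_{j}$ and from the top-order part of $(\kappa_{j}-\kappa)(\tilde{\nu}\cdot\nabla(\varphi_{j}u))$ in $G_{j}$); or a term bounded by $C_{\delta}(\|u\|_{L_{q}(\mathbb{R}_{+};W^{1}_{p}(\Omega))}+\|\langle\partial_{t}\rangle^{1/2}u\|_{L_{q}(\mathbb{R}_{+};L_{p}(\Omega))})$, coming from the terms in which a derivative falls on $\varphi_{j}$. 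Fixing $\delta$ so small that $C\theta_{\delta}\leq 1/2$, absorbing the second type into the left-hand side of each localized estimate, summing over the finitely many $j$ and using $\|u\|_{W^{2,1}_{p,q}(\Omega\times\mathbb{R}_{+})}\leq\sum_{j}\|\varphi_{j}u\|_{W^{2,1}_{p,q}(\Omega\times\mathbb{R}_{+})}$, I obtain
\begin{equation*}
\|u\|_{W^{2,1}_{p,q}(\Omega\times\mathbb{R}_{+})}\leq C\bigl(\|f\|_{L_{q}(\mathbb{R}_{+};L_{p}(\Omega))}+\|u\|_{L_{q}(\mathbb{R}_{+};W^{1}_{p}(\Omega))}+\|\langle\partial_{t}\rangle^{1/2}u\|_{L_{q}(\mathbb{R}_{+};L_{p}(\Omega))}\bigr).
\end{equation*}

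Finally I would apply Lemma 2.7 with $s=1/2$ to write $\|\langle\partial_{t}\rangle^{1/2}u\|_{L_{q}(\mathbb{R}_{+};L_{p}(\Omega))}\leq C(r^{1/2}\|\partial_{t}u\|_{L_{q}(\mathbb{R}_{+};L_{p}(\Omega))}+r^{-1/2}\|u\|_{L_{q}(\mathbb{R}_{+};L_{p}(\Omega))})$ and choose $r$ small enough to absorb the term $r^{1/2}\|\partial_{t}u\|_{L_{q}(\mathbb{R}_{+};L_{p}(\Omega))}$ into $\|u\|_{W^{2,1}_{p,q}(\Omega\times\mathbb{R}_{+})}$; since $\|u\|_{L_{q}(\mathbb{R}_{+};L_{p}(\Omega))}\leq\|u\|_{L_{q}(\mathbb{R}_{+};W^{1}_{p}(\Omega))}$, this reduces the estimate to $\|u\|_{W^{2,1}_{p,q}(\Omega\times\mathbb{R}_{+})}\leq C(\|f\|_{L_{q}(\mathbb{R}_{+};L_{p}(\Omega))}+\|u\|_{L_{q}(\mathbb{R}_{+};W^{1}_{p}(\Omega))})$, and the bound from the first step then gives $(3.79)$. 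The step I expect to be the main obstacle is the freezing: one must arrange the data $F_{j}$, $G_{j}$ of the localized problems so that precisely the genuinely top-order error terms carry the small factor $\theta_{\delta}$ — which beats the fixed constant of Lemma 3.4 after a per-$j$ absorption — while every contribution contaminated by the $\delta$-dependent size of the derivatives of $\varphi_{j}$ is of order at most one in $u$ and hence harmless after the interpolation step and the semigroup smoothing bound. (For data $f$ not of class $C^{\infty}_{0}$ one would first have to justify $u\in W^{2,1}_{p,q,0}(\Omega\times\mathbb{R}_{+})$, for instance by running the argument on the regularized solutions $e^{-\varepsilon A_{p}}u$ and letting $\varepsilon\to 0^{+}$, but the present hypothesis on $f$ makes this step unnecessary.)
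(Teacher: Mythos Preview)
Your proposal is correct and follows essentially the same route as the paper: a lower-order bound via the semigroup smoothing estimate (Lemmas 2.1, 2.2), followed by freezing the coefficient $\kappa$ through localization and applying the constant-coefficient Lemma 3.4 to each localized problem, with Lemmas 2.7 and 2.8 used to control the $H^{1/2}_{q}$-norm of the boundary error. The only cosmetic difference is that you carry the term $\|\langle\partial_{t}\rangle^{1/2}u\|_{L_{q}(\mathbb{R}_{+};L_{p}(\Omega))}$ until the final step and remove it by a global application of Lemma 2.7 (as in the paper's proof of Lemma 3.3), whereas the paper absorbs it already inside the estimate for $g_{\delta}$ via a parameter choice, producing the factor $(1+\varepsilon^{-1})$ in the bound $\|g_{\delta}\|_{H^{1,1/2}_{p,q,0}}\leq C\varepsilon\|\varphi u\|_{W^{2,1}_{p,q}}+C_{x_{0},\delta}(1+\varepsilon^{-1})\|u\|_{L_{q}(\mathbb{R}_{+};W^{1}_{p}(\Omega))}$.
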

\begin{proof}
It is sufficient for Lemma 3.5 to be proved that $u$ satisfies (3.79).
We can easily see from Lemma 3.3 that
\begin{equation}
\|u\|_{L_{q}(\mathbb{R}_{+};W^{1}_{p}(\Omega))}\leq C\|f\|_{L_{q}(\mathbb{R}_{+};L_{p}(\Omega))},
\end{equation}
where $C$ is a positive constant depending only on $n$, $\Omega$, $p$, $\kappa$ and $\kappa_{s}$.
It follows from (3.71) that we have only to obtain the following inequality:
\begin{equation}
\|u\|_{W^{2,1}_{p,q}(\Omega\times\mathbb{R}_{+})}\leq C(\|f\|_{L_{q}(\mathbb{R}_{+};L_{p}(\Omega))}+\|u\|_{L_{q}(\mathbb{R}_{+};W^{1}_{p}(\Omega))}),
\end{equation}
where $C$ is a positive constant depending only on $n$, $\Omega$, $p$, $q$, $\kappa$ and $\kappa_{s}$.

Since $\kappa \in C^{1}(\overline{\Omega})$, there exists a positive constant $\delta$ for any positive constant $\varepsilon$ such that $|\kappa(x)-\kappa(x_{0})|<\varepsilon$ for any $x, x_{0} \in \overline{\Omega}$, $x \in B_{\delta}(x_{0})$ and such that $\varphi \in C^{\infty}_{0}(\mathbb{R}^{n})$ can be defined as
\begin{equation*}
\begin{cases}
\varphi(x)=1 & \mathrm{if} \ x \in B_{\delta/2}(x_{0}), \\
0<\varphi(x)<1 & \mathrm{if} \ x \in B_{\delta}(x_{0})\setminus\overline{B_{\delta/2}(x_{0})}, \\
\varphi(x)=0 & \mathrm{if} \ x \in \mathbb{R}^{n}\setminus\overline{B_{\delta}(x_{0})}
\end{cases}
\end{equation*}
for any $x_{0} \in \overline{\Omega}$.
It is clear from (3.78) that $\varphi u$ satisfies the following problem in $\Omega\times\mathbb{R}_{+}$:
\begin{equation}
\begin{split}
&\partial_{t}(\varphi u)-\kappa(x_{0})\Delta(\varphi u)=f_{\delta} & \mathrm{in} \ \Omega\times\mathbb{R}_{+}, \\
&(\varphi u)|_{t=0}=0 & \mathrm{in} \ \Omega, \\
&\kappa(x_{0})\partial_{\nu}u+\kappa_{s}u|_{\partial\Omega}=g_{\delta} & \mathrm{on} \ \partial\Omega\times\mathbb{R}_{+}
\end{split}
\end{equation}
for any $x_{0} \in \overline{\Omega}$, where $f_{\delta}=(\varphi f)+(\kappa-\kappa(x_{0}))\Delta(\varphi u)-2\kappa\nabla\varphi\cdot\nabla u+\varphi\nabla\kappa\cdot\nabla u-\kappa(\Delta\varphi)u$, $g_{\delta}=-(\kappa-\kappa(x_{0}))\partial_{\nu}(\varphi u)+\kappa(\partial_{\nu}\varphi)u$.
Let $\nu$ be extended to $\tilde{\nu} \in C^{2}(\overline{\Omega})$.
Then we obtain by applying Lemmas 2.7, 2.8 and 3.4 to (3.82) that
\begin{equation*}
\|\varphi u\|_{W^{2,1}_{p,q}(\Omega\times\mathbb{R}_{+})}\leq C(\|f_{\delta}\|_{L_{q}(\mathbb{R}_{+};L_{p}(\Omega))}+\|g_{\delta}\|_{H^{1,1/2}_{p,q,0}(\Omega\times\mathbb{R}_{+})}),
\end{equation*}
\begin{equation*}
\|f_{\delta}\|_{L_{q}(\mathbb{R}_{+};L_{p}(\Omega))}\leq C(\|f\|_{L_{q}(\mathbb{R}_{+};L_{p}(\Omega))}+\varepsilon\|\varphi u\|_{L_{q}(\mathbb{R}_{+};W^{2}_{p}(\Omega))})+C_{x_{0},\delta}\|u\|_{L_{q}(\mathbb{R}_{+};W^{1}_{p}(\Omega))},
\end{equation*}
\begin{equation*}
\|g_{\delta}\|_{H^{1,1/2}_{p,q,0}(\Omega\times\mathbb{R}_{+})}\leq C\varepsilon\|\varphi u\|_{W^{2,1}_{p,q}(\Omega\times\mathbb{R}_{+})}+C_{x_{0},\delta}(1+\varepsilon^{-1})\|u\|_{L_{q}(\mathbb{R}_{+};W^{1}_{p}(\Omega))},
\end{equation*}
\begin{equation}
(1-C\varepsilon)\|\varphi u\|_{W^{2,1}_{p,q}(\Omega\times\mathbb{R}_{+})}\leq C\|f\|_{L_{q}(\mathbb{R}_{+};L_{p}(\Omega))}+C_{x_{0},\delta}(1+\varepsilon^{-1})\|u\|_{L_{q}(\mathbb{R}_{+};W^{1}_{p}(\Omega))},
\end{equation}
where $C$ is a positive constant depending only on $n$, $\Omega$, $p$, $q$, $\kappa$ and $\kappa_{s}$, $C_{x_{0},\delta}$ is a positive constant depending only on $n$, $\Omega$, $p$, $q$, $\kappa$, $\kappa_{s}$, $x_{0}$ and $\delta$.
Let $\varepsilon<1/C$.
Then it follows from (3.83) that
\begin{equation}
\|u\|_{W^{2,1}_{p,q}((B_{\delta/2}(x_{0})\cap\Omega)\times\mathbb{R}_{+})}\leq C_{x_{0},\delta}(\|f\|_{L_{q}(\mathbb{R}_{+};L_{p}(\Omega))}+\|u\|_{L_{q}(\mathbb{R}_{+};W^{1}_{p}(\Omega))})
\end{equation}
for any $x_{0} \in \overline{\Omega}$, where $C_{x_{0},\delta}$ is a positive constant depending only on $n$, $\Omega$, $p$, $q$, $\kappa$, $\kappa_{s}$, $x_{0}$ and $\delta$.
Since $\overline{\Omega}$ is compact in $\mathbb{R}^{n}$, (3.81) is established by (3.84).
\end{proof}
Finally, we discuss (1.1) with $u_{0}=0$, that is, the following initial-boundary value problem in $\Omega\times\mathbb{R}_{+}$:
\begin{equation}
\begin{split}
&\partial_{t}u-\kappa\Delta u=f & \mathrm{in} \ \Omega\times\mathbb{R}_{+}, \\
&u|_{t=0}=0 & \mathrm{in} \ \Omega, \\
&\kappa\partial_{\nu}u+\kappa_{s}u|_{\partial\Omega}=g & \mathrm{on} \ \partial\Omega\times\mathbb{R}_{+}.
\end{split}
\end{equation}
It can be easily seen from Theorem 3.1 and Lemma 3.5 that we obtain the following $L_{p}$-$L_{q}$ estimates for time-global solutions to (3.85):
\begin{lemma}
Let $\kappa \in C^{1}(\overline{\Omega})$ satisfy $\kappa>0$ on $\overline{\Omega}$, $\kappa_{s} \in C^{1}(\partial\Omega)$ satisfy $\kappa_{s}>0$ on $\partial\Omega$, $1<p<\infty$, $1<q<\infty$, $f \in L_{q,0}(\mathbb{R}_{+};L_{p}(\Omega))$, $g \in H^{1,1/2}_{p,q,0}(\Omega\times\mathbb{R}_{+})$.
Then $(3.85)$ has uniquely a solution $u \in W^{2,1}_{p,q,0}(\Omega\times\mathbb{R}_{+})$ satisfying
\begin{equation}
\|u\|_{W^{2,1}_{p,q}(\Omega\times\mathbb{R}_{+})}\leq C(\|f\|_{L_{q}(\mathbb{R}_{+};L_{p}(\Omega))}+\|g\|_{H^{1,1/2}_{p,q,0}(\Omega\times\mathbb{R}_{+})}),
\end{equation}
where $C$ is a positive constant depending only on $n$, $\Omega$, $p$, $q$, $\kappa$ and $\kappa_{s}$.
\end{lemma}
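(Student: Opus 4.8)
The plan is to transcribe the proof of Lemma 3.4 to the variable-coefficient setting, with Theorem 3.1 handling the stationary problem and Lemma 3.5 replacing Lemma 3.3. First I would reduce to smooth data: since $C^{\infty}_{0}(\mathbb{R}_{+};L_{p}(\Omega))$ and $C^{\infty}_{0}(\mathbb{R}_{+};W^{1}_{p}(\Omega))$ are dense in $L_{q,0}(\mathbb{R}_{+};L_{p}(\Omega))$ and in $H^{1,1/2}_{p,q,0}(\Omega\times\mathbb{R}_{+})$ respectively, one may assume $f \in C^{\infty}_{0}(\mathbb{R}_{+};L_{p}(\Omega))$ and $g \in C^{\infty}_{0}(\mathbb{R}_{+};W^{1}_{p}(\Omega))$. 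For each $t$, Theorem 3.1 with $\lambda=0$ furnishes a unique $v(\cdot,t) \in W^{2}_{p}(\Omega)$ solving $-\mathrm{div}(\kappa\nabla v)=f(\cdot,t)$, $\kappa\partial_{\nu}v+\kappa_{s}v|_{\partial\Omega}=g(\cdot,t)$, and smoothness and compact support in $t$ are inherited from $f$, $g$ by linearity, so $v \in C^{\infty}_{0}(\mathbb{R}_{+};W^{2}_{p}(\Omega))$. Then Lemma 3.5, applied with right-hand side $-\partial_{t}v \in C^{\infty}_{0}(\mathbb{R}_{+};L_{p}(\Omega))$, produces $w \in W^{2,1}_{p,q,0}(\Omega\times\mathbb{R}_{+})$ solving $\partial_{t}w-\mathrm{div}(\kappa\nabla w)=-\partial_{t}v$ with zero initial data and homogeneous Robin condition. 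By construction $u=v+w$ solves (3.85) and lies in $W^{2,1}_{p,q,0}(\Omega\times\mathbb{R}_{+})$; uniqueness follows by applying Lemma 3.5 to the difference of two solutions.

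Next I would estimate $\|u\|_{L_{q}(\mathbb{R}_{+};L_{p}(\Omega))}$ and $\|\partial_{t}u\|_{L_{q}(\mathbb{R}_{+};L_{p}(\Omega))}$ by the duality argument of Lemma 3.4. Given $\varphi \in C^{\infty}_{0}(\Omega\times\mathbb{R}_{+})$ vanishing for $t>T/2$, set $\psi(x,t)=-\varphi(x,T-t)$, solve $\partial_{t}\Psi-\mathrm{div}(\kappa\nabla\Psi)=\psi$ with zero initial and homogeneous Robin data by Lemma 3.5, and put $\Phi(x,t)=\Psi(x,T-t)$, so that $\Phi \in W^{2,1}_{p^{*},q^{*},0}(\Omega\times\mathbb{R}_{+})$ solves the backward adjoint problem $\partial_{t}\Phi+\mathrm{div}(\kappa\nabla\Phi)=\varphi$, $\Phi|_{t=0}=0$, $\kappa\partial_{\nu}\Phi+\kappa_{s}\Phi|_{\partial\Omega}=0$, with $\|\Phi\|_{W^{2,1}_{p^{*},q^{*}}(\Omega\times\mathbb{R}_{+})} \leq C\|\varphi\|_{L_{q^{*}}(\mathbb{R}_{+};L_{p^{*}}(\Omega))}$. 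Integration by parts in $\Omega\times\mathbb{R}_{+}$, using the Robin conditions for $u$ and $\Phi$, gives $(u,\varphi)_{\Omega\times\mathbb{R}_{+}}=-(f,\Phi)_{\Omega\times\mathbb{R}_{+}}-(g,\Phi)_{\partial\Omega\times\mathbb{R}_{+}}$; fixing an extension $\tilde{\nu} \in C^{2}(\overline{\Omega})$ of $\nu$ and applying the divergence theorem rewrites the boundary term as interior pairings $-\sum_{i}(\partial_{x_{i}}(\tilde{\nu}_{i}g),\Phi)_{\Omega\times\mathbb{R}_{+}}-\sum_{i}(\tilde{\nu}_{i}g,\partial_{x_{i}}\Phi)_{\Omega\times\mathbb{R}_{+}}$, after which $\|\Phi\|_{L_{q^{*}}(\mathbb{R}_{+};W^{1}_{p^{*}}(\Omega))} \leq C\|\Phi\|_{W^{2,1}_{p^{*},q^{*}}}$ yields $\|u\|_{L_{q}(\mathbb{R}_{+};L_{p}(\Omega))} \leq C(\|f\|_{L_{q}(\mathbb{R}_{+};L_{p}(\Omega))}+\|g\|_{L_{q}(\mathbb{R}_{+};W^{1}_{p}(\Omega))})$. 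The same computation for $(\partial_{t}u,\varphi)_{\Omega\times\mathbb{R}_{+}}$ generates, besides terms already of the good type, the pairings $(\tilde{\nu}_{i}\partial_{t}g,\partial_{x_{i}}\Phi)_{\Omega\times\mathbb{R}_{+}}$; for these the Parseval identity gives the bound $\|\langle \partial_{t} \rangle^{1/2}g\|_{L_{q}(\mathbb{R}_{+};L_{p}(\Omega))}\|\langle \partial_{t} \rangle^{1/2}\partial_{x_{i}}\Phi\|_{L_{q^{*}}(\mathbb{R}_{+};L_{p^{*}}(\Omega))}$, and Lemma 2.8 controls $\|\langle \partial_{t} \rangle^{1/2}\partial_{x_{i}}\Phi\|_{L_{q^{*}}(\mathbb{R}_{+};L_{p^{*}}(\Omega))}$ by $\|\Phi\|_{W^{2,1}_{p^{*},q^{*}}}$, whence $\|\partial_{t}u\|_{L_{q}(\mathbb{R}_{+};L_{p}(\Omega))} \leq C(\|f\|_{L_{q}(\mathbb{R}_{+};L_{p}(\Omega))}+\|g\|_{H^{1,1/2}_{p,q,0}(\Omega\times\mathbb{R}_{+})})$.

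To close the estimate I would note that for each $t$ the function $u(\cdot,t)$ solves $-\mathrm{div}(\kappa\nabla u)=f(\cdot,t)-\partial_{t}u(\cdot,t)$ with boundary data $g(\cdot,t)$, so Theorem 3.1 gives $\|u(\cdot,t)\|_{W^{2}_{p}(\Omega)} \leq C(\|f(\cdot,t)\|_{L_{p}(\Omega)}+\|\partial_{t}u(\cdot,t)\|_{L_{p}(\Omega)}+\|g(\cdot,t)\|_{W^{1}_{p}(\Omega)})$; taking $L_{q}$ in $t$ and combining with the two bounds above gives (3.86) for smooth data, and the density reduction then gives the general case. The main obstacle is the $\|\partial_{t}u\|_{L_{q}(\mathbb{R}_{+};L_{p}(\Omega))}$ estimate: one cannot bound $v$ and $w$ separately, because $\partial_{t}v$ brings in $\partial_{t}f$ and $\partial_{t}g$, which are not controlled by the right-hand side of (3.86); it is precisely the duality argument, combined with the half-derivative-in-time gain for the adjoint solution $\Phi$ supplied by Lemma 2.8, that lets the $\partial_{t}g$ boundary term be absorbed into $\|g\|_{H^{1,1/2}_{p,q,0}(\Omega\times\mathbb{R}_{+})}$. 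Everything else is a routine transcription of the proof of Lemma 3.4 with $\kappa\Delta$ replaced by $\mathrm{div}(\kappa\nabla\cdot)$.
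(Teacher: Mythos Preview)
Your proposal is correct and matches the paper's own proof, which simply reads ``It is the same as in Lemma 3.4.'' You have spelled out precisely that transcription: density reduction, construction $u=v+w$ via Theorem~3.1 and Lemma~3.5, the duality argument with the backward adjoint $\Phi$ built from Lemma~3.5, the Parseval/Lemma~2.8 treatment of the $\partial_t g$ term, and the final $W^2_p$ estimate from Theorem~3.1.
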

\begin{proof}
It is the same as in Lemma 3.4.
\end{proof}
\begin{theorem}
Let $\kappa \in C^{1}(\overline{\Omega})$ satisfy $\kappa>0$ on $\overline{\Omega}$, $\kappa_{s} \in C^{1}(\partial\Omega)$ satisfy $\kappa_{s}>0$ on $\partial\Omega$, $1<p<\infty$, $1<q<\infty$.
Then there exists a positive constant $\lambda^{0}_{1}$ depending only on $n$, $\Omega$, $p$, $q$, $\kappa$ and $\kappa_{s}$ such that if $e^{\lambda_{1}t}f \in L_{q,0}(\mathbb{R}_{+};L_{p}(\Omega))$, $e^{\lambda_{1}t}g \in H^{1,1/2}_{p,q,0}(\Omega\times\mathbb{R}_{+})$ for some $0\leq\lambda_{1}\leq\lambda^{0}_{1}$, then $(3.85)$ has uniquely a solution $u \in W^{2,1}_{p,q,0}(\Omega\times\mathbb{R}_{+})$ satisfying
\begin{equation}
\|e^{\lambda_{1}t}u\|_{W^{2,1}_{p,q}(\Omega\times\mathbb{R}_{+})}\leq C_{p,q,\lambda_{1}}(\|e^{\lambda_{1}t}f\|_{L_{q}(\mathbb{R}_{+};L_{p}(\Omega))}+\|e^{\lambda_{1}t}g\|_{H^{1,1/2}_{p,q,0}(\Omega\times\mathbb{R}_{+})}),
\end{equation}
where $C_{p,q,\lambda_{1}}$ is a positive constant depending only on $n$, $\Omega$, $p$, $q$, $\kappa$, $\kappa_{s}$ and $\lambda_{1}$.
\end{theorem}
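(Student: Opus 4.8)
The plan is to remove the exponential weight by the substitution $v:=e^{\lambda_{1}t}u$, which turns (3.85) into a problem of exactly the same type but with an extra zeroth-order term $\lambda_{1}v$ on the right-hand side, and then to absorb that term by invoking Lemma 3.6 provided $\lambda_{1}$ is small. If $u$ solves (3.85) and $v=e^{\lambda_{1}t}u$, then from $e^{\lambda_{1}t}\partial_{t}u=\partial_{t}v-\lambda_{1}v$, $e^{\lambda_{1}t}\mathrm{div}(\kappa\nabla u)=\mathrm{div}(\kappa\nabla v)$ and the fact that $\partial_{\nu}$ commutes with multiplication by $e^{\lambda_{1}t}$ one obtains
\begin{equation*}
\partial_{t}v-\mathrm{div}(\kappa\nabla v)=e^{\lambda_{1}t}f+\lambda_{1}v \ \mathrm{in} \ \Omega\times\mathbb{R}_{+}, \quad v|_{t=0}=0, \quad \kappa\partial_{\nu}v+\kappa_{s}v|_{\partial\Omega}=e^{\lambda_{1}t}g,
\end{equation*}
and conversely any $v\in W^{2,1}_{p,q,0}(\Omega\times\mathbb{R}_{+})$ solving this problem produces $u:=e^{-\lambda_{1}t}v\in W^{2,1}_{p,q,0}(\Omega\times\mathbb{R}_{+})$ solving (3.85), since $e^{-\lambda_{1}t}$ together with its derivatives is bounded on $\mathbb{R}_{+}$ and $v$ vanishes for $t<0$.

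Next I would solve the $v$-problem by a contraction mapping argument. Define $T\colon W^{2,1}_{p,q,0}(\Omega\times\mathbb{R}_{+})\to W^{2,1}_{p,q,0}(\Omega\times\mathbb{R}_{+})$ by letting $Tw$ be the unique solution furnished by Lemma 3.6 with data $(e^{\lambda_{1}t}f+\lambda_{1}w,\ e^{\lambda_{1}t}g)$. This is well defined because $e^{\lambda_{1}t}f+\lambda_{1}w\in L_{q,0}(\mathbb{R}_{+};L_{p}(\Omega))$ (the first summand by hypothesis, the second by $W^{2,1}_{p,q,0}(\Omega\times\mathbb{R}_{+})\hookrightarrow L_{q,0}(\mathbb{R}_{+};L_{p}(\Omega))$) and $e^{\lambda_{1}t}g\in H^{1,1/2}_{p,q,0}(\Omega\times\mathbb{R}_{+})$ by hypothesis. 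Since $Tw_{1}-Tw_{2}$ solves (3.85) with data $(\lambda_{1}(w_{1}-w_{2}),0)$, Lemma 3.6 gives
\begin{equation*}
\|Tw_{1}-Tw_{2}\|_{W^{2,1}_{p,q}(\Omega\times\mathbb{R}_{+})}\leq C\lambda_{1}\|w_{1}-w_{2}\|_{L_{q}(\mathbb{R}_{+};L_{p}(\Omega))}\leq C\lambda_{1}\|w_{1}-w_{2}\|_{W^{2,1}_{p,q}(\Omega\times\mathbb{R}_{+})},
\end{equation*}
where $C$ is the constant of Lemma 3.6, depending only on $n$, $\Omega$, $p$, $q$, $\kappa$, $\kappa_{s}$. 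Setting $\lambda^{0}_{1}:=1/(2C)$, the map $T$ is a contraction for every $0\leq\lambda_{1}\leq\lambda^{0}_{1}$, hence has a unique fixed point $v$, and $u:=e^{-\lambda_{1}t}v\in W^{2,1}_{p,q,0}(\Omega\times\mathbb{R}_{+})$ solves (3.85). Uniqueness of $u$ in $W^{2,1}_{p,q,0}(\Omega\times\mathbb{R}_{+})$ needs no new argument: since $f=e^{-\lambda_{1}t}(e^{\lambda_{1}t}f)\in L_{q,0}(\mathbb{R}_{+};L_{p}(\Omega))$ and $g\in H^{1,1/2}_{p,q,0}(\Omega\times\mathbb{R}_{+})$, Lemma 3.6 already asserts that (3.85) has a unique solution in this class.

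Finally I would extract the weighted estimate. Applying Lemma 3.6 to the identity $v=Tv$ and using $\|v\|_{L_{q}(\mathbb{R}_{+};L_{p}(\Omega))}\leq\|v\|_{W^{2,1}_{p,q}(\Omega\times\mathbb{R}_{+})}$ yields
\begin{equation*}
\|v\|_{W^{2,1}_{p,q}(\Omega\times\mathbb{R}_{+})}\leq C\|e^{\lambda_{1}t}f\|_{L_{q}(\mathbb{R}_{+};L_{p}(\Omega))}+C\lambda_{1}\|v\|_{W^{2,1}_{p,q}(\Omega\times\mathbb{R}_{+})}+C\|e^{\lambda_{1}t}g\|_{H^{1,1/2}_{p,q,0}(\Omega\times\mathbb{R}_{+})},
\end{equation*}
and because $C\lambda_{1}\leq C\lambda^{0}_{1}=1/2<1$ the middle term is absorbed, which gives (3.87) with $C_{p,q,\lambda_{1}}=C/(1-C\lambda_{1})$ after replacing $v$ by $e^{\lambda_{1}t}u$. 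The only point that requires attention is that $\lambda^{0}_{1}$ be chosen purely in terms of the Lemma 3.6 constant, so that it depends only on $n$, $\Omega$, $p$, $q$, $\kappa$, $\kappa_{s}$; apart from that the proof is routine, and I do not expect a genuine obstacle, since the theorem is essentially Lemma 3.6 together with a perturbation of the lower-order term $\lambda_{1}u$.
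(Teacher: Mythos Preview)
Your proposal is correct and follows essentially the same route as the paper: substitute $v=e^{\lambda_{1}t}u$, obtain the transformed problem with the extra term $\lambda_{1}v$ on the right, apply Lemma~3.6, and absorb $\lambda_{1}v$ by choosing $\lambda^{0}_{1}$ small relative to the constant of Lemma~3.6. The only difference is that the paper simply asserts that the transformed problem has a unique solution satisfying the a~priori bound, whereas you spell out existence via a contraction mapping---a harmless elaboration of the same idea.
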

\begin{proof}
It is clear from the identity $e^{\lambda_{1}t}\partial_{t}u=-\lambda_{1}e^{\lambda_{1}t}u+\partial_{t}(e^{\lambda_{1}t}u)$ that
\begin{equation}
\begin{split}
&\partial_{t}(e^{\lambda_{1}t}u)-\mathrm{div}(\kappa\nabla(e^{\lambda_{1}t}u))=(e^{\lambda_{1}t}f)+\lambda_{1}(e^{\lambda_{1}t}u) & \mathrm{in} \ \Omega\times\mathbb{R}_{+}, \\
&(e^{\lambda_{1}t}u)|_{t=0}=0 & \mathrm{in} \ \Omega, \\
&\kappa\partial_{\nu}(e^{\lambda_{1}t}u)+\kappa_{s}(e^{\lambda_{1}t}u)|_{\partial\Omega}=(e^{\lambda_{1}t}g) & \mathrm{on} \ \partial\Omega\times\mathbb{R}_{+}.
\end{split}
\end{equation}
It can be easily seen from Lemma 3.6 that (3.88) has uniquely a solution $u \in W^{2,1}_{p,q,0}(\Omega\times\mathbb{R}_{+})$ satisfying
\begin{equation}
\|e^{\lambda_{1}t}u\|_{W^{2,1}_{p,q}(\Omega\times\mathbb{R}_{+})}\leq C(\|e^{\lambda_{1}t}f\|_{L_{q}(\mathbb{R}_{+};L_{p}(\Omega))}+\lambda_{1}\|e^{\lambda_{1}t}u\|_{L_{q}(\mathbb{R}_{+};L_{p}(\Omega))}+\|e^{\lambda_{1}t}g\|_{H^{1,1/2}_{p,q,0}(\Omega\times\mathbb{R}_{+})}),
\end{equation}
where $C$ is a positive constant depending only on $n$, $\Omega$, $p$, $q$, $\kappa$ and $\kappa_{s}$.
Let $\lambda^{0}_{1}<1/C$.
Then (3.89) clearly leads to (3.87).
\end{proof}

\section{Proof of Theorems 2.1 and 2.2}
\subsection{Proof of Theorem 2.1}
Set $v(t)=e^{-tA_{p}}u_{0}$.
Then it is clear from Theorem 3.2 that $v$ is a unique solution to (1.1) with $f=0$, $g=0$ satisfying
\begin{equation}
\|v\|_{W^{2,1}_{p,q}(\Omega\times\mathbb{R}_{+})}\leq C\|u_{0}\|_{X_{p,q}(\Omega)},
\end{equation}
where $C$ is a positive constant depending only on $n$, $\Omega$, $p$, $q$, $\kappa$ and $\kappa_{s}$.
Therefore, it is essential for Theorem 2.1 to be proved that (1.1) has uniquely a solution $u=v+w$, where $w$ is a unique solution to (1.1) with $u_{0}=0$ satisfying
\begin{equation}
\|w\|_{W^{2,1}_{p,q}(\Omega\times\mathbb{R}_{+})}\leq C(\|f\|_{L_{q}((0,T);L_{p}(\Omega))}+\|g\|_{H^{1,1/2}_{p,q,0}(\Omega\times(0,T))}),
\end{equation}
where $C$ is a positive constant depending only on $n$, $\Omega$, $p$, $q$, $\kappa$ and $\kappa_{s}$.
Let $\tilde{f} \in L_{q,0}(\mathbb{R}_{+};L_{p}(\Omega))$ and $\tilde{g} \in H^{1,1/2}_{p,q,0}(\Omega\times\mathbb{R}_{+})$ satisfy $\tilde{f}(t)=f(t)$, $\tilde{g}(t)=g(t)$ for any $0<t<T$, $\|\tilde{f}\|_{L_{q}(\mathbb{R}_{+};L_{p}(\Omega))}=\|f\|_{L_{q}((0,T);L_{p}(\Omega))}$, $\|\tilde{g}\|_{H^{1,1/2}_{p,q,0}(\Omega\times\mathbb{R}_{+})}\leq 2\|g\|_{H^{1,1/2}_{p,q,0}(\Omega\times(0,T))}$.
Then it follows from the definition of $\tilde{f}$ and $\tilde{g}$ that $w$ must satisfy the following problem in $\Omega\times\mathbb{R}_{+}$:
\begin{equation}
\begin{split}
&\partial_{t}w-\mathrm{div}(\kappa\nabla w)=\tilde{f} & \mathrm{in} \ \Omega\times\mathbb{R}_{+}, \\
&w|_{t=0}=0 & \mathrm{in} \ \Omega, \\
&\kappa\partial_{\nu}w+\kappa_{s}w|_{\partial\Omega}=\tilde{g} & \mathrm{on} \ \partial\Omega\times\mathbb{R}_{+}.
\end{split}
\end{equation}
By applying Theorem 3.3 with $\lambda_{1}=0$ to (4.3), we obtain that $w$ is a unique solution to (4.3) satisfying
\begin{equation*}
\|w\|_{W^{2,1}_{p,q}(\Omega\times\mathbb{R}_{+})}\leq C(\|\tilde{f}\|_{L_{q}(\mathbb{R}_{+};L_{p}(\Omega))}+\|\tilde{g}\|_{H^{1,1/2}_{p,q,0}(\Omega\times\mathbb{R}_{+})}),
\end{equation*}
where $C$ is a positive constant depending only on $n$, $\Omega$, $p$, $q$, $\kappa$ and $\kappa_{s}$, which implies (4.2).
Therefore, it is clear that (2.6) is established by (4.1), (4.2).

\subsection{Proof of Theorem 2.2}
Set $v(t)=e^{-tA_{p}}u_{0}$.
Then it follows from Theorem 3.2 that $v$ is a unique solution to (1.1) with $f=0$, $g=0$ satisfying
\begin{equation}
\|e^{\lambda_{1}t}v\|_{W^{2,1}_{p,q}(\Omega\times\mathbb{R}_{+})}\leq C\|u_{0}\|_{X_{p,q}(\Omega)}
\end{equation}
for any $0<\lambda_{1}<\Lambda_{1}/2$, where $C$ is a positive constant depending only on $n$, $\Omega$, $p$, $q$, $\kappa$, $\kappa_{s}$ and $\lambda_{1}$.
Therefore, it is sufficient for Theorem 2.2 that (1.1) has uniquely a solution $u=v+w$, where $w$ satisfies (1.1) with $u_{0}=0$ and
\begin{equation}
\|e^{\lambda_{1}t}w\|_{W^{2,1}_{p,q}(\Omega\times\mathbb{R}_{+})}\leq C(\|e^{\lambda_{1}t}f\|_{L_{q}(\mathbb{R}_{+};L_{p}(\Omega))}+\|e^{\lambda_{1}t}g\|_{H^{1,1/2}_{p,q,0}(\Omega\times\mathbb{R}_{+})})
\end{equation}
for some $0\leq\lambda_{1}\leq\lambda^{0}_{1}$, where $C$ is a positive constant depending only on $n$, $\Omega$, $p$, $q$, $\kappa$, $\kappa_{s}$ and $\lambda_{1}$, $\lambda^{0}_{1}$ is a positive constant defined as in Theorem 3.3.
Let $e^{\lambda_{1}t}\tilde{f} \in L_{q,0}(\mathbb{R}_{+};L_{p}(\Omega))$ satisfy $\tilde{f}(t)=f(t)$ for any $t>0$, $\|e^{\lambda_{1}t}\tilde{f}\|_{L_{q}(\mathbb{R}_{+};L_{p}(\Omega))}=\|e^{\lambda_{1}t}f\|_{L_{q}(\mathbb{R}_{+};L_{p}(\Omega))}$.
Then it follows from the definition of $\tilde{f}$ that $w$ must satisfies the following problem in $\Omega\times\mathbb{R}_{+}$:
\begin{equation}
\begin{split}
&\partial_{t}w-\mathrm{div}(\kappa\nabla w)=\tilde{f} & \mathrm{in} \ \Omega\times\mathbb{R}_{+}, \\
&w|_{t=0}=0 & \mathrm{in} \ \Omega, \\
&\kappa\partial_{\nu}w+\kappa_{s}w|_{\partial\Omega}=\tilde{g} & \mathrm{on} \ \partial\Omega\times\mathbb{R}_{+}.
\end{split}
\end{equation}
It can be easily seen from Theorem 3.3 that (4.6) has uniquely a solution $w$ satisfying
\begin{equation*}
\|e^{\lambda_{1}t}w\|_{W^{2,1}_{p,q}(\Omega\times\mathbb{R}_{+})}\leq C(\|e^{\lambda_{1}t}\tilde{f}\|_{L_{q}(\mathbb{R}_{+};L_{p}(\Omega))}+\|e^{\lambda_{1}t}g\|_{H^{1,1/2}_{p,q,0}(\Omega\times\mathbb{R}_{+})})
\end{equation*}
for some $0\leq\lambda_{1}\leq\lambda^{0}_{1}$, where $C$ is a positive constant depending only on $n$, $\Omega$, $p$, $q$, $\kappa$, $\kappa_{s}$ and $\lambda_{1}$, which implies (4.5).
Therefore, it is obvious that (2.7) is established by (4.4), (4.6).

\section{Global $L_{p}$-$L_{q}$ estimates for solutions to (1.2)}
Let $r$ be, throughout this section, a real number satisfying
\begin{equation*}
\begin{cases}
1<r\leq \displaystyle\frac{n}{n-p} & \mathrm{if} \ 1<p<n, \\
1<r<\infty & \mathrm{if} \ n\leq p<\infty.
\end{cases}
\end{equation*}
It is useful to remark that $L_{p}$-$L_{q}$ estimates for nonlinear terms are obtained from the following lemma:
\begin{lemma}
Let $1<q<\infty$, $I$ be a subinterval on $[0,\infty)$, $(X_{0},\|\cdot\|_{X_{0}})$ and $(X_{1},\|\cdot\|_{X_{1}})$ be a pair of Banach spaces such that $X_{1}$ is dense and continuously included in $X_{0}$.
Then
\begin{equation}
W^{1}_{q}(I;X_{0})\cap L_{q}(I;X_{1})\hookrightarrow BUC(I;(X_{0},X_{1})_{1-1/q,q}),
\end{equation}
where $\hookrightarrow$ is the continuous inclusion.
\end{lemma}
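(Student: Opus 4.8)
The plan is to reduce to the case $I=\mathbb{R}$ and then combine a pointwise estimate, obtained from the $K$-functional and Hardy's inequality, with a density argument. First I would extend $u$ from $I$ to the whole line: $u$ extends to $\tilde u\in W^{1}_{q}(\mathbb{R};X_{0})\cap L_{q}(\mathbb{R};X_{1})$ with $\|\tilde u\|_{W^{1}_{q}(\mathbb{R};X_{0})\cap L_{q}(\mathbb{R};X_{1})}\leq C\|u\|_{W^{1}_{q}(I;X_{0})\cap L_{q}(I;X_{1})}$ — for a bounded interval by reflection across the endpoints followed by multiplication by a smooth scalar cut-off and extension by zero, for $[0,\infty)$ by a single reflection across $0$. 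The only subtle point is that the reflected time derivative really is the weak derivative of the reflection, which holds because $W^{1}_{q}(I;X_{0})\hookrightarrow C(I;X_{0})$ (so no jump contribution appears at the gluing points). Hence it suffices to prove $W^{1}_{q}(\mathbb{R};X_{0})\cap L_{q}(\mathbb{R};X_{1})\hookrightarrow BUC(\mathbb{R};(X_{0},X_{1})_{1-1/q,q})$ and then restrict back to $I$.

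For the pointwise estimate, fix $t_{0}\in\mathbb{R}$ and for each $t>0$ split $u(t_{0})=a_{0}+a_{1}$ with $a_{1}=\tfrac{1}{t}\int_{t_{0}}^{t_{0}+t}u(s)\,ds\in X_{1}$ (a Bochner integral of an $X_{1}$-valued function) and $a_{0}=u(t_{0})-a_{1}=-\tfrac{1}{t}\int_{t_{0}}^{t_{0}+t}\!\int_{t_{0}}^{s}u'(r)\,dr\,ds\in X_{0}$. Then $\|a_{1}\|_{X_{1}}\leq\tfrac{1}{t}\int_{t_{0}}^{t_{0}+t}\|u(s)\|_{X_{1}}\,ds$ and $\|a_{0}\|_{X_{0}}\leq\int_{t_{0}}^{t_{0}+t}\|u'(r)\|_{X_{0}}\,dr$, so with $g:=\|u'(\cdot)\|_{X_{0}}+\|u(\cdot)\|_{X_{1}}\in L_{q}(\mathbb{R})$ the $K$-functional satisfies $K(t,u(t_{0});X_{0},X_{1})\leq\|a_{0}\|_{X_{0}}+t\|a_{1}\|_{X_{1}}\leq\int_{t_{0}}^{t_{0}+t}g(s)\,ds$. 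Since $1-1/q$ is precisely the value of $\theta$ for which $t^{-\theta q}\cdot t^{-1}=t^{-q}$, the $K$-method norm gives
\[
\|u(t_{0})\|_{(X_{0},X_{1})_{1-1/q,q}}^{q}=\int_{0}^{\infty}t^{-q}K(t,u(t_{0}))^{q}\,dt\leq\int_{0}^{\infty}t^{-q}\Big(\int_{0}^{t}g(t_{0}+r)\,dr\Big)^{q}dt\leq\Big(\tfrac{q}{q-1}\Big)^{q}\int_{\mathbb{R}}g(t_{0}+r)^{q}\,dr,
\]
where the last step is Hardy's inequality $\int_{0}^{\infty}\big(\tfrac{1}{t}\int_{0}^{t}f\big)^{q}\,dt\leq\big(\tfrac{q}{q-1}\big)^{q}\int_{0}^{\infty}f^{q}$, valid for $1<q<\infty$. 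As $\|g\|_{L_{q}(\mathbb{R})}\leq\|u\|_{W^{1}_{q}(\mathbb{R};X_{0})}+\|u\|_{L_{q}(\mathbb{R};X_{1})}$ and the bound is uniform in $t_{0}$, this yields $u\in L_{\infty}(\mathbb{R};(X_{0},X_{1})_{1-1/q,q})$ with the continuity estimate.

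Continuity then follows by approximation. Mollifying $u$ in time and truncating by cut-offs shows $C^{\infty}_{0}(\mathbb{R};X_{1})$ is dense in $W^{1}_{q}(\mathbb{R};X_{0})\cap L_{q}(\mathbb{R};X_{1})$: mollification preserves membership in $X_{1}$ and commutes with $\partial_{t}$, while multiplication by a $C^{1}$ scalar cut-off is bounded on both factors. Every $\phi\in C^{\infty}_{0}(\mathbb{R};X_{1})$ belongs to $BUC(\mathbb{R};(X_{0},X_{1})_{1-1/q,q})$ because $X_{1}\hookrightarrow(X_{0},X_{1})_{1-1/q,q}$ continuously. Applying the pointwise estimate above to differences $\phi_{n}-\phi_{m}$ of an approximating sequence shows $(\phi_{n})$ is Cauchy, hence convergent, in $BUC(\mathbb{R};(X_{0},X_{1})_{1-1/q,q})$; its limit coincides almost everywhere with $u$ (the $L_{q}(\mathbb{R};X_{0})$-convergence identifies them), so $u$ has a $BUC$ representative with values in $(X_{0},X_{1})_{1-1/q,q}$. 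Restricting to $I$ completes the argument.

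The only genuine obstacle is the pointwise estimate in the second paragraph: one must resist bounding $\int_{t_{0}}^{t_{0}+t}g$ by $t^{1/q'}\|g\|_{L_{q}}$, which, after integration against $t^{-q}\,dt$, loses a logarithm; keeping the average intact and invoking Hardy's inequality — available here precisely because the interpolation parameter is the endpoint value $1-1/q$ — is exactly what closes the computation. The extension step, the density of smooth $X_{1}$-valued functions, and the passage to uniform limits of $BUC$ functions are all routine.
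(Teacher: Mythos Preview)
Your argument is correct. The paper does not actually prove this lemma: its entire proof is the one-line citation ``It is \cite[Theorem 4.10.2]{Amann}.'' You instead give a self-contained argument --- extension to $\mathbb{R}$, the $K$-functional splitting $u(t_{0})=\big(u(t_{0})-\tfrac{1}{t}\int_{t_{0}}^{t_{0}+t}u\big)+\tfrac{1}{t}\int_{t_{0}}^{t_{0}+t}u$, and Hardy's inequality at the critical exponent $\theta=1-1/q$ --- followed by density to upgrade the $L_{\infty}$ bound to $BUC$. This is essentially the classical proof of the trace theorem that Amann records, so what you have written is not a \emph{different} route so much as an explicit unpacking of the cited reference. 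The gain is that your version is elementary and makes transparent why the interpolation index must be $1-1/q$ (it is exactly the value for which the $K$-method weight $t^{-\theta q-1}$ matches the Hardy weight $t^{-q}$); the paper's citation, on the other hand, imports the result with no work but also no insight. Your remark at the end about resisting H\"older in favour of Hardy is the genuinely important observation.
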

\begin{proof}
It is \cite[Theorem 4.10.2]{Amann}.
\end{proof}
By applying Theorems 2.1 and 2.2 to (1.2), it can be easily seen from the Banach fixed point theorem that $L_{p}$-$L_{q}$ estimates for time-local and time-global solutions to (1.2) are established as follows:
\begin{theorem}
Let $\Omega$ be a bounded domain in $\mathbb{R}^{n}$ with its $C^{1,1}$-boundary $\partial\Omega$, $\kappa \in C^{1}(\overline{\Omega})$ satisfy $\kappa>0$ on $\overline{\Omega}$, $\kappa_{s} \in C^{1}(\partial\Omega)$ satisfy $\kappa_{s}>0$ on $\partial\Omega$, $0<T<\infty$, $1<p<\infty$, $2<q<\infty$, $u_{0} \in X_{p,q}(\Omega)$, $f \in L_{q}((0,T);L_{p}(\Omega))$, $g \in H^{1,1/2}_{p,q,0}(\Omega\times(0,T))$.
Then there exists a positive constant $T_{*}\leq T$ depending only on $n$, $\Omega$, $p$, $q$, $\kappa$, $\kappa_{s}$, $r$, $u_{0}$, $f$ and $g$ such that $(1.2)$ has uniquely a solution $u \in W^{2,1}_{p,q}(\Omega\times(0,T_{*}))$ satisfying
\begin{equation}
\|u\|_{W^{2,1}_{p,q}(\Omega\times(0,T_{*}))}\leq C_{p,q}(\|u_{0}\|_{X_{p,q}(\Omega)}+\|f\|_{L_{q}((0,T);L_{p}(\Omega))}+\|g\|_{H^{1,1/2}_{p,q,0}(\Omega\times(0,T))}),
\end{equation}
where $C_{p,q}$ is a positive constant depending only on $n$, $\Omega$, $p$, $q$, $\kappa$, $\kappa_{s}$ and $r$.
\end{theorem}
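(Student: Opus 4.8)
The plan is to reformulate (1.2) as a fixed-point problem for the linear theory already established in Theorems 2.1 and 2.2. First I would introduce the solution map $\Phi$ that sends a function $v$ to the unique solution $u \in W^{2,1}_{p,q}(\Omega\times(0,T_*))$ of the \emph{linear} third initial-boundary value problem (1.1) with the same $u_0$, $g$, but with right-hand side $f + |v|^{r-1}v$ in place of $f$. Theorem 2.1 guarantees that $\Phi$ is well defined and provides the linear estimate
\begin{equation*}
\|\Phi(v)\|_{W^{2,1}_{p,q}(\Omega\times(0,T_*))} \leq C_{p,q}\bigl(\|u_0\|_{X_{p,q}(\Omega)}+\|f\|_{L_q((0,T);L_p(\Omega))}+\|g\|_{H^{1,1/2}_{p,q,0}(\Omega\times(0,T))}+\bigl\||v|^{r-1}v\bigr\|_{L_q((0,T_*);L_p(\Omega))}\bigr),
\end{equation*}
uniformly in $T_*\leq T$. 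The key point is then to control the nonlinear term $\bigl\||v|^{r-1}v\bigr\|_{L_q((0,T_*);L_p(\Omega))}$ in terms of $\|v\|_{W^{2,1}_{p,q}}$ with a constant that vanishes as $T_*\to 0$, so that $\Phi$ becomes a contraction on a small ball of $W^{2,1}_{p,q}(\Omega\times(0,T_*))$ for $T_*$ small.

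To estimate the nonlinearity I would use the embedding structure. Since $2<q<\infty$, Lemma 5.1 with $X_0=L_p(\Omega)$, $X_1=W^2_p(\Omega)$ (or more precisely $X^1_p(\Omega)$) gives $W^{2,1}_{p,q}(\Omega\times(0,T_*))\hookrightarrow BUC([0,T_*];X_{p,q}(\Omega))$, and then Lemma 2.2 (together with Lemma 2.3) yields $X_{p,q}(\Omega)\hookrightarrow W^1_r(\Omega)\hookrightarrow L_{rp}(\Omega)$ under the standing hypothesis on $r$: for $1<p<n$ the condition $r\leq n/(n-p)$ is exactly what makes $X_{p,q}(\Omega)\subset L_{rp}(\Omega)$ via Sobolev embedding, and for $n\leq p<\infty$ any finite $r$ works. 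Consequently $|v(t)|^{r-1}v(t)\in L_p(\Omega)$ with $\bigl\||v(t)|^{r-1}v(t)\bigr\|_{L_p(\Omega)}=\|v(t)\|^r_{L_{rp}(\Omega)}\leq C\|v(t)\|^r_{X_{p,q}(\Omega)}\leq C\|v\|^r_{W^{2,1}_{p,q}(\Omega\times(0,T_*))}$. Integrating the $q$-th power over $(0,T_*)$ gives the factor $T_*^{1/q}$:
\begin{equation*}
\bigl\||v|^{r-1}v\bigr\|_{L_q((0,T_*);L_p(\Omega))} \leq C\,T_*^{1/q}\,\|v\|^r_{W^{2,1}_{p,q}(\Omega\times(0,T_*))}.
\end{equation*}
A parallel estimate handles the difference $|v_1|^{r-1}v_1-|v_2|^{r-1}v_2$: the pointwise bound $\bigl||v_1|^{r-1}v_1-|v_2|^{r-1}v_2\bigr|\leq C(|v_1|^{r-1}+|v_2|^{r-1})|v_1-v_2|$, Hölder in $L_{rp}$, and the same embedding give Lipschitz control on bounded balls, again with a prefactor $T_*^{1/q}$.

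With these two inequalities in hand, I would fix $R = 2C_{p,q}(\|u_0\|_{X_{p,q}(\Omega)}+\|f\|_{L_q((0,T);L_p(\Omega))}+\|g\|_{H^{1,1/2}_{p,q,0}(\Omega\times(0,T))})$ and choose $T_*\leq T$ so small (depending on $n,\Omega,p,q,\kappa,\kappa_s,r$ and on $R$, hence on $u_0,f,g$) that $C_{p,q}\,C\,T_*^{1/q}R^{r}\leq R/2$ and the Lipschitz constant on the ball $\{\|v\|\leq R\}$ is at most $1/2$. Then $\Phi$ maps this closed ball into itself and is a contraction, so the Banach fixed point theorem gives a unique solution $u\in W^{2,1}_{p,q}(\Omega\times(0,T_*))$; uniqueness in the whole space (not just the ball) follows by a standard continuation/Gronwall argument on the difference of two solutions, using the local Lipschitz bound. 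Estimate (5.2) is then immediate from $\|u\|\leq R$ and the definition of $R$. The main obstacle is purely the nonlinear estimate: making sure the restriction on $r$ is used correctly so that $X_{p,q}(\Omega)\hookrightarrow L_{rp}(\Omega)$ holds (this is where $C^{1,1}$-regularity of $\partial\Omega$, Lemma 2.2 and Lemma 2.3 enter), and that the smallness factor is a genuine positive power of $T_*$ — which requires $q>1$, and the hypothesis $q>2$ is what also secures the time-continuity embedding in Lemma 5.1 needed to make sense of pointwise-in-$t$ values in $X_{p,q}(\Omega)$.
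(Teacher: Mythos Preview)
Your proposal is correct and follows essentially the same route as the paper: define the solution map via Theorem 2.1, use Lemma 5.1 with $X_0=L_p(\Omega)$, $X_1=W^2_p(\Omega)$ to land in $BUC([0,T_*];B^{2(1-1/q)}_{p,q}(\Omega))$, then use $q>2$ and the restriction on $r$ to embed into $L_{rp}(\Omega)$, producing the factor $T_*^{1/q}$ and closing by Banach's fixed point on the ball of radius $R=2C_{p,q}(\text{data})$. Two small remarks: your ``$W^1_r(\Omega)$'' should read $W^1_p(\Omega)$ (the chain is $B^{2(1-1/q)}_{p,q}\hookrightarrow W^1_p\hookrightarrow L_{rp}$, and Lemma 2.2 concerns $X^\alpha_p$ rather than $X_{p,q}$, so it is Lemma 2.3 plus the Besov--Sobolev embedding that you need here), and the hypothesis $q>2$ is used not for Lemma 5.1 itself but to guarantee $2(1-1/q)>1$ so that $B^{2(1-1/q)}_{p,q}(\Omega)\hookrightarrow W^1_p(\Omega)$.
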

\begin{proof}
Set
\begin{equation*}
B(R,T_{*})=\{\bar{u} \in W^{2,1}_{p,q}(\Omega\times(0,T_{*})) \ ; \ \|\bar{u}\|_{W^{2,1}_{p,q}(\Omega\times(0,T_{*}))}\leq R\}
\end{equation*}
for any $R\geq 0$, $0<T_{*}\leq T$.
Let $S$ be the mapping from $B(R,T_{*})$ to $W^{2,1}_{p,q}(\Omega\times(0,T_{*}))$ defined as $S(\bar{u})=u$, where $u$ is a solution to the following problem in $\Omega\times(0,T_{*})$:
\begin{equation}
\begin{split}
&\partial_{t}u-\mathrm{div}(\kappa\nabla u)=f(\bar{u}) & \mathrm{in} \ \Omega\times(0,T_{*}), \\
&u|_{t=0}=u_{0} & \mathrm{in} \ \Omega, \\
&\kappa\partial_{\nu}u+\kappa_{s}u|_{\partial\Omega}=g & \mathrm{on} \ \partial\Omega\times(0,T_{*}),
\end{split}
\end{equation}
where $f(\bar{u})=f+|\bar{u}|^{r-1}\bar{u}$.
Then Theorem 2.1 implies that
\begin{equation}
\|u\|_{W^{2,1}_{p,q}(\Omega\times(0,T_{*}))}\leq C(\|u_{0}\|_{X_{p,q}(\Omega)}+\|f(\bar{u})\|_{L_{q}((0,T_{*});L_{p}(\Omega))}+\|g\|_{H^{1,1/2}_{p,q,0}(\Omega\times(0,T_{*}))}),
\end{equation}
where $C$ is a positive constant depending only on $n$, $\Omega$, $p$, $q$, $\kappa$ and $\kappa_{s}$.
Moreover, it is clear from Lemma 5.1 with $X_{0}=L_{p}(\Omega)$, $X_{1}=W^{2}_{p}(\Omega)$ that
\begin{equation*}
\begin{split}
\|f(\bar{u})\|_{L_{q}((0,T_{*});L_{p}(\Omega))}&\leq \|f\|_{L_{q}((0,T_{*});L_{p}(\Omega))}+\|\bar{u}\|^{r}_{L_{qr}((0,T_{*});L_{pr}(\Omega))} \\
&\leq \|f\|_{L_{q}((0,T);L_{p}(\Omega))}+T^{1/q}_{*}\|\bar{u}\|^{r}_{L_{\infty}((0,T_{*});W^{1}_{p}(\Omega))} \\
&\leq \|f\|_{L_{q}((0,T);L_{p}(\Omega))}+T^{1/q}_{*}\|\bar{u}\|^{r}_{L_{\infty}((0,T_{*});B^{2(1-1/q)}_{p,q}(\Omega))},
\end{split}
\end{equation*}
\begin{equation}
\|f(\bar{u})\|_{L_{q}((0,T_{*});L_{p}(\Omega))}\leq \|f\|_{L_{q}((0,T);L_{p}(\Omega))}+CT^{1/q}_{*}\|\bar{u}\|^{r}_{W^{2,1}_{p,q}(\Omega\times(0,T_{*}))},
\end{equation}
where $C$ is a positive constant depending only on $n$, $\Omega$, $p$, $q$ and $r$.
It can be easily seen from (5.4), (5.5) that
\begin{equation}
\|u\|_{W^{2,1}_{p,q}(\Omega\times(0,T_{*}))}\leq C_{p,q}(\|u_{0}\|_{X_{p,q}(\Omega)}+\|f\|_{L_{q}((0,T);L_{p}(\Omega))}+\|g\|_{H^{1,1/2}_{p,q,0}(\Omega\times(0,T))}+T^{1/q}_{*}R^{r}),
\end{equation}
where $C_{p,q}$ is a positive constant depending only on $n$, $\Omega$, $p$, $q$, $\kappa$, $\kappa_{s}$ and $r$.
Set
\begin{equation}
R=R_{p,q}:=2C_{p,q}(\|u_{0}\|_{X_{p,q}(\Omega)}+\|f\|_{L_{q}((0,T);L_{p}(\Omega))}+\|g\|_{H^{1,1/2}_{p,q,0}(\Omega\times(0,T))}),
\end{equation}
\begin{equation}
T_{1}=\left(\frac{1}{2C_{p,q}R^{r-1}_{p,q}}\right)^{q}.
\end{equation}
Then $S$ can be defined as a mapping in $B(R_{p,q},T_{*})$ for any $0<T_{*}\leq T_{1}$.
Set $u_{i}=S(\bar{u}_{i})$ for any $\bar{u}_{i} \in B(R_{p,q},T_{*})$, $i=1, 2$.
Then it follows from Theorem 2.1, the H\"{o}lder inequality and Lemma 5.1 with $X_{0}=L_{p}(\Omega)$, $X_{1}=W^{2}_{p}(\Omega)$ that
\begin{equation*}
\begin{split}
\|u_{2}-u_{1}\|_{W^{2,1}_{p,q}(\Omega\times(0,T_{*}))}&\leq C\|f(\bar{u}_{2})-f(\bar{u}_{1})\|_{L_{q}((0,T_{*});L_{p}(\Omega))} \\
&\leq C(\|\bar{u}_{1}\|^{r-1}_{L_{qr}((0,T_{*});L_{pr}(\Omega))}+\|\bar{u}_{2}\|^{r-1}_{L_{qr}((0,T_{*});L_{pr}(\Omega))})\|\bar{u}_{2}-\bar{u}_{1}\|_{L_{qr}((0,T_{*});L_{pr}(\Omega))} \\
&\leq CT^{1/q}_{*}(\|\bar{u}_{1}\|^{r-1}_{L_{\infty}((0,T_{*});W^{1}_{p}(\Omega))}+\|\bar{u}_{2}\|^{r-1}_{L_{\infty}((0,T_{*});W^{1}_{p}(\Omega))})\|\bar{u}_{2}-\bar{u}_{1}\|_{L_{\infty}((0,T_{*});W^{1}_{p}(\Omega))},
\end{split}
\end{equation*}
\begin{equation}
\|u_{2}-u_{1}\|_{W^{2,1}_{p,q}(\Omega\times(0,T_{*}))}\leq CR^{r-1}_{p,q}T^{1/q}_{*}\|\bar{u}_{2}-\bar{u}_{1}\|_{W^{2,1}_{p,q}(\Omega\times(0,T_{*}))},
\end{equation}
where $C$ is a positive constant depending only on $n$, $\Omega$, $p$, $q$, $\kappa$, $\kappa_{s}$ and $r$.
Let
\begin{equation}
T_{2}<\left(\frac{1}{CR^{r-1}_{p,q}}\right)^{q}.
\end{equation}
Then $S$ is a contraction mapping in $B(R_{p,q},T_{*})$ for any $0<T_{*}\leq \min\{T_{1},T_{2}\}$.
By applying the Banach fixed point theorem to $B(R_{p,q},T_{*})$ and $S$, where $T_{*}=\min\{T_{1},T_{2}\}$, we can conclude that (1.2) has uniquely a solution in $B(R_{p,q},T_{*})$.
\end{proof}
\begin{theorem}
Let $\Omega$ be a bounded domain in $\mathbb{R}^{n}$ with its $C^{1,1}$-boundary $\partial\Omega$, $\kappa \in C^{1}(\overline{\Omega})$ satisfy $\kappa>0$ on $\overline{\Omega}$, $\kappa_{s} \in C^{1}(\partial\Omega)$ satisfy $\kappa_{s}>0$ on $\partial\Omega$, $1<p<\infty$, $2<q<\infty$, $u_{0} \in X_{p,q}(\Omega)$.
Then there exists a positive constant $\lambda^{0}_{1}$ depending only on $n$, $\Omega$, $p$, $q$, $\kappa$, $\kappa_{s}$ and $r$ such that if $e^{\lambda_{1}t}f \in L_{q}(\mathbb{R}_{+};L_{p}(\Omega))$, $e^{\lambda_{1}t}g \in H^{1,1/2}_{p,q,0}(\Omega\times\mathbb{R}_{+})$ for some $0\leq\lambda_{1}\leq \lambda^{0}_{1}$, then there exists a positive constant $\varepsilon_{\lambda_{1}}$ depending only on $n$, $\Omega$, $p$, $q$, $\kappa$, $\kappa_{s}$, $r$ and $\lambda_{1}$ such that $(1.2)$ has uniquely a solution $u \in W^{2,1}_{p,q}(\Omega\times\mathbb{R}_{+})$ satisfying
\begin{equation}
\|e^{\lambda_{1}t}u\|_{W^{2,1}_{p,q}(\Omega\times\mathbb{R}_{+})}\leq C_{p,q,\lambda_{1}}(\|u_{0}\|_{X_{p,q}(\Omega)}+\|e^{\lambda_{1}t}f\|_{L_{q}(\mathbb{R}_{+};L_{p}(\Omega))}+\|e^{\lambda_{1}t}g\|_{H^{1,1/2}_{p,q,0}(\Omega\times\mathbb{R}_{+})}),
\end{equation}
where $C_{p,q,\lambda_{1}}$ is a positive constant depending only on $n$, $\Omega$, $p$, $q$, $\kappa$, $\kappa_{s}$, $r$ and $\lambda_{1}$ provided that
\begin{equation*}
\|u_{0}\|_{X_{p,q}(\Omega)}+\|e^{\lambda_{1}t}f\|_{L_{q}(\mathbb{R}_{+};L_{p}(\Omega))}+\|e^{\lambda_{1}t}g\|_{H^{1,1/2}_{p,q,0}(\Omega\times\mathbb{R}_{+})}\leq \varepsilon_{\lambda_{1}}.
\end{equation*}
\end{theorem}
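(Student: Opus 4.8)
The plan is to construct the global solution of (1.2) by a fixed point argument on $\mathbb{R}_{+}$ in the exponentially weighted space, using Theorem 2.2 as the linear solver and treating $|u|^{r-1}u$ as an additional forcing term. Take $\lambda^{0}_{1}$ to be the constant furnished by Theorem 2.2, fix $0\leq\lambda_{1}\leq\lambda^{0}_{1}$ with $e^{\lambda_{1}t}f\in L_{q}(\mathbb{R}_{+};L_{p}(\Omega))$ and $e^{\lambda_{1}t}g\in H^{1,1/2}_{p,q,0}(\Omega\times\mathbb{R}_{+})$, and for $R>0$ introduce the complete metric space
\[
B(R)=\{\bar{u}\in W^{2,1}_{p,q}(\Omega\times\mathbb{R}_{+}) \ ; \ \|e^{\lambda_{1}t}\bar{u}\|_{W^{2,1}_{p,q}(\Omega\times\mathbb{R}_{+})}\leq R\}
\]
with distance $d(\bar{u}_{1},\bar{u}_{2})=\|e^{\lambda_{1}t}(\bar{u}_{2}-\bar{u}_{1})\|_{W^{2,1}_{p,q}(\Omega\times\mathbb{R}_{+})}$. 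For $\bar{u}\in B(R)$ let $S\bar{u}$ be the solution of (1.1) with forcing $f+|\bar{u}|^{r-1}\bar{u}$, initial value $u_{0}$ and boundary value $g$; once the nonlinear estimate below shows $e^{\lambda_{1}t}|\bar{u}|^{r-1}\bar{u}\in L_{q}(\mathbb{R}_{+};L_{p}(\Omega))$, Theorem 2.2 guarantees that $S\bar{u}\in W^{2,1}_{p,q}(\Omega\times\mathbb{R}_{+})$ is well defined and unique and satisfies the weighted estimate (2.7). A fixed point of $S$ then solves (1.2).

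The crucial ingredient is a global-in-time nonlinear estimate. Writing $v=e^{\lambda_{1}t}\bar{u}$ and using $e^{\lambda_{1}t}|\bar{u}|^{r-1}\bar{u}=e^{-(r-1)\lambda_{1}t}|v|^{r-1}v$ together with $e^{-(r-1)\lambda_{1}t}\leq 1$ for $\lambda_{1}\geq 0$, $r>1$, $t>0$, the problem reduces to estimating $\|\,|v|^{r-1}v\,\|_{L_{q}(\mathbb{R}_{+};L_{p}(\Omega))}=\|v\|^{r}_{L_{rq}(\mathbb{R}_{+};L_{pr}(\Omega))}$. I would establish the embedding
\[
W^{2,1}_{p,q}(\Omega\times\mathbb{R}_{+})\hookrightarrow L_{rq}(\mathbb{R}_{+};L_{pr}(\Omega))
\]
in four steps: (i) Lemma 5.1 with $X_{0}=L_{p}(\Omega)$, $X_{1}=W^{2}_{p}(\Omega)$ gives $W^{2,1}_{p,q}(\Omega\times\mathbb{R}_{+})\hookrightarrow BUC(\mathbb{R}_{+};(L_{p}(\Omega),W^{2}_{p}(\Omega))_{1-1/q,q})\hookrightarrow L_{\infty}(\mathbb{R}_{+};B^{2(1-1/q)}_{p,q}(\Omega))$; (ii) since $q>2$ we have $2(1-1/q)>1$, so $B^{2(1-1/q)}_{p,q}(\Omega)\hookrightarrow W^{1}_{p}(\Omega)$, and the range of $r$ fixed at the start of Section 5 is precisely the one for which the Sobolev embedding $W^{1}_{p}(\Omega)\hookrightarrow L_{pr}(\Omega)$ holds; (iii) $W^{2,1}_{p,q}(\Omega\times\mathbb{R}_{+})\hookrightarrow L_{q}(\mathbb{R}_{+};W^{2}_{p}(\Omega))\hookrightarrow L_{q}(\mathbb{R}_{+};L_{pr}(\Omega))$; (iv) interpolating the bounds of (i)--(ii) and of (iii), both with values in $L_{pr}(\Omega)$, via $\|v\|_{L_{rq}(\mathbb{R}_{+};L_{pr}(\Omega))}\leq\|v\|^{1/r}_{L_{q}(\mathbb{R}_{+};L_{pr}(\Omega))}\|v\|^{1-1/r}_{L_{\infty}(\mathbb{R}_{+};L_{pr}(\Omega))}$. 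Hence $\|v\|^{r}_{L_{rq}(\mathbb{R}_{+};L_{pr}(\Omega))}\leq C\|v\|_{L_{q}(\mathbb{R}_{+};W^{2}_{p}(\Omega))}\|v\|^{r-1}_{L_{\infty}(\mathbb{R}_{+};W^{1}_{p}(\Omega))}\leq C\|v\|^{r}_{W^{2,1}_{p,q}(\Omega\times\mathbb{R}_{+})}$, which yields
\[
\|e^{\lambda_{1}t}|\bar{u}|^{r-1}\bar{u}\|_{L_{q}(\mathbb{R}_{+};L_{p}(\Omega))}\leq C\|e^{\lambda_{1}t}\bar{u}\|^{r}_{W^{2,1}_{p,q}(\Omega\times\mathbb{R}_{+})}.
\]
The elementary inequality $\bigl|\,|a|^{r-1}a-|b|^{r-1}b\,\bigr|\leq C(|a|^{r-1}+|b|^{r-1})|a-b|$, combined with H\"{o}lder's inequality and the same embedding, gives the corresponding Lipschitz bound $\|e^{\lambda_{1}t}(|\bar{u}_{2}|^{r-1}\bar{u}_{2}-|\bar{u}_{1}|^{r-1}\bar{u}_{1})\|_{L_{q}(\mathbb{R}_{+};L_{p}(\Omega))}\leq C(\|e^{\lambda_{1}t}\bar{u}_{1}\|^{r-1}_{W^{2,1}_{p,q}(\Omega\times\mathbb{R}_{+})}+\|e^{\lambda_{1}t}\bar{u}_{2}\|^{r-1}_{W^{2,1}_{p,q}(\Omega\times\mathbb{R}_{+})})\,d(\bar{u}_{1},\bar{u}_{2})$.

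Finally I would close the argument as in the proof of Theorem 5.1, but on $\mathbb{R}_{+}$. By Theorem 2.2 and the nonlinear estimate, for $\bar{u}\in B(R)$,
\[
\|e^{\lambda_{1}t}S\bar{u}\|_{W^{2,1}_{p,q}(\Omega\times\mathbb{R}_{+})}\leq C_{p,q,\lambda_{1}}(D+CR^{r}),
\]
where $D:=\|u_{0}\|_{X_{p,q}(\Omega)}+\|e^{\lambda_{1}t}f\|_{L_{q}(\mathbb{R}_{+};L_{p}(\Omega))}+\|e^{\lambda_{1}t}g\|_{H^{1,1/2}_{p,q,0}(\Omega\times\mathbb{R}_{+})}$, and for $\bar{u}_{1},\bar{u}_{2}\in B(R)$, $d(S\bar{u}_{1},S\bar{u}_{2})\leq C_{p,q,\lambda_{1}}R^{r-1}d(\bar{u}_{1},\bar{u}_{2})$. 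Choosing $R=2C_{p,q,\lambda_{1}}D$ and then $\varepsilon_{\lambda_{1}}>0$ so small that $D\leq\varepsilon_{\lambda_{1}}$ forces both $C_{p,q,\lambda_{1}}(D+CR^{r})\leq R$ and $C_{p,q,\lambda_{1}}R^{r-1}\leq 1/2$, the map $S$ becomes a contraction of $B(R)$ into itself, so the Banach fixed point theorem produces a unique $u\in B(R)$ solving (1.2), and $\|e^{\lambda_{1}t}u\|_{W^{2,1}_{p,q}(\Omega\times\mathbb{R}_{+})}\leq R=2C_{p,q,\lambda_{1}}D$ is precisely (5.13). The only genuinely new point relative to Theorem 5.1 is step (iv): over $\mathbb{R}_{+}$ one cannot gain a small factor $T_{*}^{1/q}$ from H\"{o}lder's inequality in time, so the smallness must come entirely from $D$ (through $R^{r-1}$), and the embedding $W^{2,1}_{p,q}(\Omega\times\mathbb{R}_{+})\hookrightarrow L_{rq}(\mathbb{R}_{+};L_{pr}(\Omega))$ has to be obtained by interpolation rather than by a crude cutoff in time; this is the main, though mild, technical obstacle.
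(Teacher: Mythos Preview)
Your proposal is correct and follows essentially the same route as the paper's proof: a contraction argument in the weighted ball using Theorem~2.2 as the linear solver, with the nonlinear term controlled via Lemma~5.1, the embedding $B^{2(1-1/q)}_{p,q}(\Omega)\hookrightarrow W^{1}_{p}(\Omega)$ (valid since $q>2$) and the Sobolev embedding $W^{1}_{p}(\Omega)\hookrightarrow L_{pr}(\Omega)$, the smallness coming from $R^{r-1}$ rather than from a time cutoff. The only cosmetic difference is that you substitute $v=e^{\lambda_{1}t}\bar{u}$ and discard the factor $e^{-(r-1)\lambda_{1}t}\leq 1$, whereas the paper distributes the weight as $e^{\lambda_{1}t}=(e^{(\lambda_{1}/r)t})^{r}$ and works with $e^{(\lambda_{1}/r)t}\bar{u}$; these lead to the same estimate.
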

\begin{proof}
Set
\begin{equation*}
B(\varepsilon)=\{\bar{u} \in W^{2,1}_{p,q}(\Omega\times\mathbb{R}_{+}) \ ; \ \|e^{\lambda_{1}t}\bar{u}\|_{W^{2,1}_{p,q}(\Omega\times\mathbb{R}_{+})}\leq \varepsilon\}
\end{equation*}
for any $\varepsilon\geq 0$.
Let $S$ be the mapping from $B(\varepsilon)$ to $W^{2,1}_{p,q}(\Omega\times\mathbb{R}_{+})$ defined as $S(\bar{u})=u$, where $u$ is a solution to the following problem in $\Omega\times\mathbb{R}_{+}$:
\begin{equation}
\begin{split}
&\partial_{t}u-\mathrm{div}(\kappa\nabla u)=f(\bar{u}) & \mathrm{in} \ \Omega\times\mathbb{R}_{+}, \\
&u|_{t=0}=u_{0} & \mathrm{in} \ \Omega, \\
&\kappa\partial_{\nu}u+\kappa_{s}u|_{\partial\Omega}=g & \mathrm{on} \ \partial\Omega\times\mathbb{R}_{+},
\end{split}
\end{equation}
where $f(\bar{u})=f+|\bar{u}|^{r-1}\bar{u}$.
Then it follows from Theorem 2.2 that
\begin{equation}
\|e^{\lambda_{1}t}u\|_{W^{2,1}_{p,q}(\Omega\times\mathbb{R}_{+})}\leq C_{\lambda_{1}}(\|u_{0}\|_{X_{p,q}(\Omega)}+\|e^{\lambda_{1}t}f(\bar{u})\|_{L_{q}(\mathbb{R}_{+};L_{p}(\Omega))}+\|e^{\lambda_{1}t}g\|_{H^{1,1/2}_{p,q,0}(\Omega\times\mathbb{R}_{+})}),
\end{equation}
where $C_{\lambda_{1}}$ is a positive constant depending only on $n$, $\Omega$, $p$, $q$, $\kappa$, $\kappa_{s}$ and $\lambda_{1}$, $\lambda^{0}_{1}$ is a positive constant defined as in Theorem 2.2.
Moreover, it is obvious from Lemma 5.1 with $X_{0}=L_{p}(\Omega)$, $X_{1}=W^{2}_{p}(\Omega)$ that
\begin{equation*}
\begin{split}
\|e^{\lambda_{1}t}f(\bar{u})\|_{L_{q}(\mathbb{R}_{+};L_{p}(\Omega))}&\leq \|e^{\lambda_{1}t}f\|_{L_{q}(\mathbb{R}_{+};L_{p}(\Omega))}+\|e^{(\lambda_{1}/r)t}\bar{u}\|^{r-1}_{L_{\infty}(\mathbb{R}_{+};L_{pr}(\Omega))}\|e^{(\lambda_{1}/r)t}\bar{u}\|_{L_{q}(\mathbb{R}_{+};L_{pr}(\Omega))} \\
&\leq \|e^{\lambda_{1}t}f\|_{L_{q}(\mathbb{R}_{+};L_{p}(\Omega))}+\|e^{(\lambda_{1}/r)t}\bar{u}\|^{r-1}_{L_{\infty}(\mathbb{R}_{+};W^{1}_{p}(\Omega))}\|e^{(\lambda_{1}/r)t}\bar{u}\|_{L_{q}(\mathbb{R}_{+};W^{1}_{p}(\Omega))} \\
&\leq \|e^{\lambda_{1}t}f\|_{L_{q}(\mathbb{R}_{+};L_{p}(\Omega))}+\|e^{(\lambda_{1}/r)t}\bar{u}\|^{r-1}_{L_{\infty}(\mathbb{R}_{+};B^{2(1-1/q)}_{p,q}(\Omega))}\|e^{(\lambda_{1}/r)t}\bar{u}\|_{L_{q}(\mathbb{R}_{+};W^{1}_{p}(\Omega))},
\end{split}
\end{equation*}
\begin{equation}
\|e^{\lambda_{1}t}f(\bar{u})\|_{L_{q}(\mathbb{R}_{+};L_{p}(\Omega))}\leq \|e^{\lambda_{1}t}f\|_{L_{q}(\mathbb{R}_{+};L_{p}(\Omega))}+C\|e^{(\lambda_{1}/r)t}\bar{u}\|^{r-1}_{W^{2,1}_{p,q}(\Omega\times\mathbb{R}_{+})}\|e^{(\lambda_{1}/r)t}\bar{u}\|_{L_{q}(\mathbb{R}_{+};W^{1}_{p}(\Omega))},
\end{equation}
where $C$ is a positive constant depending only on $n$, $\Omega$, $p$, $q$ and $r$.
It can be easily seen from (5.13), (5.14) that
\begin{equation}
\|e^{\lambda_{1}t}u\|_{W^{2,1}_{p,q}(\Omega\times\mathbb{R}_{+})}\leq C_{p,q,\lambda_{1}}(\|u_{0}\|_{X_{p,q}(\Omega)}+\|e^{\lambda_{1}t}f\|_{L_{q}(\mathbb{R}_{+};L_{p}(\Omega))}+\|e^{\lambda_{1}t}g\|_{H^{1,1/2}_{p,q,0}(\Omega\times\mathbb{R}_{+})}+\varepsilon^{r}),
\end{equation}
where $C_{p,q,\lambda_{1}}$ is a positive constant depending only on $n$, $\Omega$, $p$, $q$, $\kappa$, $\kappa_{s}$, $r$ and $\lambda_{1}$.
Set
\begin{equation}
\varepsilon=\varepsilon_{p,q,\lambda_{1}}:=2C_{p,q,\lambda_{1}}(\|u_{0}\|_{X_{p,q}(\Omega)}+\|e^{\lambda_{1}t}f\|_{L_{q}(\mathbb{R}_{+};L_{p}(\Omega))}+\|e^{\lambda_{1}t}g\|_{H^{1,1/2}_{p,q,0}(\Omega\times\mathbb{R}_{+})}),
\end{equation}
\begin{equation}
\varepsilon_{1}=\left(\frac{1}{2C_{p,q,\lambda_{1}}}\right)^{1/(r-1)}.
\end{equation}
Then $S$ can be defined as a mapping in $B(\varepsilon_{p,q,\lambda_{1}})$ for any $0\leq \varepsilon_{p,q,\lambda_{1}}\leq \varepsilon_{1}$.
Set $u_{i}=S(\bar{u}_{i})$ for any $\bar{u}_{i} \in B(\varepsilon_{p,q,\lambda_{1}})$, $i=1, 2$.
Then it is derived from Theorem 2.2, the H\"{o}lder inequality and Lemma 5.1 with $X_{0}=L_{p}(\Omega)$, $X_{1}=W^{2}_{p}(\Omega)$ that
\begin{equation*}
\begin{split}
\|e^{\lambda_{1}t}(u_{2}-u_{1})\|_{W^{2,1}_{p,q}(\Omega\times\mathbb{R}_{+})}\leq& C_{\lambda_{1}}\|e^{\lambda_{1}t}(f(\bar{u}_{2})-f(\bar{u}_{1}))\|_{L_{q}(\mathbb{R}_{+};L_{p}(\Omega))} \\
\leq& C_{\lambda_{1}}(\|e^{(\lambda_{1}/r)t}\bar{u}_{1}\|^{r-1}_{L_{\infty}(\mathbb{R}_{+};L_{pr}(\Omega))}+\|e^{(\lambda_{1}/r)t}\bar{u}_{2}\|^{r-1}_{L_{\infty}(\mathbb{R}_{+};L_{pr}(\Omega))}) \\
&\times \|e^{(\lambda_{1}/r)t}(\bar{u}_{2}-\bar{u}_{1})\|_{L_{q}(\mathbb{R}_{+};L_{pr}(\Omega))} \\
\leq& C_{\lambda_{1}}(\|e^{(\lambda_{1}/r)t}\bar{u}_{1}\|^{r-1}_{L_{\infty}(\mathbb{R}_{+};W^{1}_{p}(\Omega))}+\|e^{(\lambda_{1}/r)t}\bar{u}_{2}\|^{r-1}_{L_{\infty}(\mathbb{R}_{+};W^{1}_{p}(\Omega))}) \\
&\times \|e^{(\lambda_{1}/r)t}(\bar{u}_{2}-\bar{u}_{1})\|_{L_{q}(\mathbb{R}_{+};W^{1}_{p}(\Omega))},
\end{split}
\end{equation*}
\begin{equation}
\|e^{\lambda_{1}t}(u_{2}-u_{1})\|_{W^{2,1}_{p,q}(\Omega\times\mathbb{R}_{+})}\leq C_{\lambda_{1}}\varepsilon^{r-1}_{p,q,\lambda_{1}}\|e^{(\lambda_{1}/r)t}(\bar{u}_{2}-\bar{u}_{1})\|_{L_{q}(\mathbb{R}_{+};W^{1}_{p}(\Omega))}
\end{equation}
where $C_{\lambda_{1}}$ is a positive constant depending only on $n$, $\Omega$, $p$, $q$, $\kappa$, $\kappa_{s}$, $r$ and $\lambda_{1}$.
Assume that
\begin{equation}
\varepsilon_{2}<\left(\frac{1}{C_{\lambda_{1}}}\right)^{1/(r-1)}.
\end{equation}
Then $S$ is a contraction mapping in $B(\varepsilon_{p,q,\lambda_{1}})$ for any $0\leq \varepsilon_{p,q,\lambda_{1}}\leq \min\{\varepsilon_{1},\varepsilon_{2}\}$.
By applying the Banach fixed point theorem to $B(\varepsilon_{p,q,\lambda_{1}})$ and $S$ for any $0\leq \varepsilon_{p,q,\lambda_{1}}\leq \min\{\varepsilon_{1},\varepsilon_{2}\}$, we can conclude that (1.2) has uniquely a solution in $B(\varepsilon_{p,q,\lambda_{1}})$.
\end{proof}


\end{document}